\documentclass{amsart}
\usepackage{graphicx}
\usepackage{amssymb}
\usepackage{array}
\usepackage{enumerate}

\newtheorem{thm}{Theorem}[section]
\newtheorem{lem}[thm]{Lemma}

\theoremstyle{definition}

\theoremstyle{remark}

\numberwithin{equation}{section}

\begin{document}

\title[Fast methods to compute the Riemann zeta function]{Fast methods to compute the Riemann zeta function}
\author[G.A. Hiary]{Ghaith Ayesh Hiary}
\thanks{Preparation of this material is partially supported by the National Science Foundation under agreements No.  DMS-0757627 (FRG grant) and DMS-0635607 (while at the Institute for Advanced Study). This material is based on the author's PhD thesis.}
\address{Pure Mathematics, University of Waterloo, 200 University Ave West, Waterloo, Ontario, Canada, N2L 3G1.}
\email{hiaryg@gmail.com}
\subjclass[2010]{Primary 11M06, 11Y16; Secondary 68Q25}
\keywords{Riemann zeta function, algorithms, cubic exponential sums}

\begin{abstract}
The Riemann zeta function on the critical line can be computed  using a straightforward application of the Riemann-Siegel formula, Sch\"onhage's method, or Heath-Brown's method. The complexities of these methods have exponents 1/2, 3/8, and 1/3 respectively. In this article, three new fast and potentially practical methods to compute zeta are presented. One method is very simple. Its complexity has exponent 2/5. A second method relies on this author's algorithm to compute quadratic exponential sums. Its complexity has exponent 1/3. The third method, which is our main result, employs an algorithm developed here to compute cubic exponential sums with a small cubic coefficient. Its complexity has exponent 4/13 (approximately, 0.307). 
\end{abstract}

\maketitle

\section{Introduction}
The Riemann zeta function is defined by:

\begin{equation}
\zeta(s):=\sum_{n=1}^{\infty} \frac{1}{n^s}\,, \qquad \Re(s)>1\,.
\end{equation}

\noindent
It can be continued analytically to the entire complex plane except for a simple pole at $s=1$. The values of $\zeta(1/2+it)$ on finite intervals are of great interest to number theorists. For example, they are used in the numerical verification of the Riemann Hypothesis, and more recently, as numerical evidence for apparent connections between the zeta function and certain random matrix theory models. 

Such considerations have motivated searches for methods to numerically evaluate $\zeta(1/2+it)$ to within $\pm \,t^{-\lambda}$ for any $t>1$ and any fixed $\lambda$.  Searches for such methods can also be motivated from a computational complexity perspective, for the zeta function is of fundamental importance in number theory, so one may simply ask: how fast can it be computed?

In this article, new fast methods to numerically evaluate $\zeta(1/2+it)$ to within $\pm\, t^{-\lambda}$ for any $t>1$ and any fixed $\lambda$ are presented. Our fastest method has complexity $t^{4/13+o_\lambda(1)}$ (notice $4/13\approx 0.307$). This improves by a noticeable margin on the ``complexity bound'' of $t^{1/3+o_{\lambda}(1)}$. (The notations $O_{\lambda}(t)$ and $t^{o_{\lambda}(1)}$ indicate asymptotic constants depend only on $\lambda$, and are taken as $t\to \infty$.) 

Our main result is the following upper bound on the number of arithmetic operations (additions, multiplications, evaluations of the logarithm of a positive number, and evaluations of the complex exponential) on numbers of $O_{\lambda}((\log t)^2)$ bits that our fastest algorithm uses. 

\begin{thm} \label{thm:zeta1}
Given any constant $\lambda$, there are effectively computable constants $A_1:=A_1(\lambda)$, $A_2:=A_2(\lambda)$, $A_3:=A_3(\lambda)$, and $A_4:=A_4(\lambda)$, and absolute constants $\kappa_1$, $\kappa_2$, and $\kappa_3$, such that for any $t>1$, the value of $\zeta(1/2+it)$ can be computed to within $\pm\, t^{-\lambda}$ using $\le A_1\,(\log t)^{\kappa_1}\,t^{4/13}$ operations on numbers of $\le A_2\, (\log t)^2$ bits, provided a precomputation costing $\le A_3\,(\log t)^{\kappa_2}\,t^{4/13}$ operations, and requiring $\le A_4\,(\log t)^{\kappa_3}\,t^{4/13}$ bits of storage, is performed.
\end{thm}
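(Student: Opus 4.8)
The plan is to reduce $\zeta(1/2+it)$ to its Riemann--Siegel main sum and then evaluate that sum by a block decomposition in which each block, after the phase is truncated to a cubic polynomial, is handled by the cubic exponential sum algorithm developed in this paper. For the reduction I would use the Riemann--Siegel formula: $e^{i\vartheta(t)}\zeta(1/2+it) = 2\,\Re\big(e^{i\vartheta(t)}\sum_{n\le N}n^{-1/2-it}\big) + C(t) + O(t^{-(2k+1)/4})$, with $N=\lfloor(t/2\pi)^{1/2}\rfloor$, $\vartheta$ the Riemann--Siegel theta function, and $C(t)$ the standard asymptotic correction truncated after $k$ terms. Taking $k$ of order $\lambda$ makes the last error at most $t^{-\lambda}/4$, and evaluating $\vartheta(t)$ (via Stirling's series) and $C(t)$ (via the rapidly convergent power series of the functions occurring in it) costs only $\mathrm{poly}(\log t)$ operations; so it suffices to compute $S(t):=\sum_{n\le N}n^{-1/2-it}$ to within $\pm\,t^{-\lambda-1}$.

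Next I would split $[1,N]$ into dyadic ranges $[V,2V)$ and each such range into consecutive blocks $[v,v+K)$ of a common length $K=K(V)$ to be chosen. On a block, with $k=n-v$, write $n^{-1/2-it}=v^{-1/2-it}(1+k/v)^{-1/2}\exp(-it\log(1+k/v))$ and expand $\log(1+k/v)$ in powers of $k/v$. Keeping the cubic Taylor polynomial of the phase produces a factor $e^{i(\alpha_1 k+\alpha_2 k^2+\alpha_3 k^3)}$ with $\alpha_j\asymp t/v^j$; the leftover factor $h(k):=v^{-1/2}(1+k/v)^{-1/2}\exp(ir(k))$, where $r$ collects the quartic-and-higher part of the phase, is --- once $K\lesssim v\,t^{-1/4}$, so that $r$ is $O(1)$ on a disk of radius $\asymp 2$ in the variable $k/K$ --- an analytic function of $k$ that a polynomial $\sum_{j\le m}c_j(k/K)^j$ of degree $m=O_\lambda(\log t)$ approximates on the block to within $\pm\,t^{-\lambda-10}$. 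Hence, up to an error $\ll t^{-\lambda-9}$ per block, the block sum equals $\sum_{j\le m}c_j\sum_{0\le k<K}k^j\,e^{i(\alpha_1 k+\alpha_2 k^2+\alpha_3 k^3)}$, a combination of $O_\lambda(\log t)$ cubic exponential sums with monomial amplitudes; each of these is evaluated in $\mathrm{poly}(\log t)$ operations by the cubic-sum algorithm of this paper (extended to polynomial amplitudes, or after a finite-difference reduction to the pure sum) --- provided $\alpha_3$ is small enough for that algorithm to apply.

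It remains to choose $K(V)$. Since enlarging $K$ both spoils the quartic bound and increases $|\alpha_3|K^3$, I would take $K(V)\asymp v\,t^{-1/4}$, the largest $K$ for which the quartic tail is $O(1)$; then each dyadic range carries $\asymp V/K\asymp t^{1/4}$ blocks and $|\alpha_3|K^3\asymp t^{1/4}$ uniformly. The admissibility hypothesis of the cubic-sum algorithm (the cubic coefficient must not exceed a prescribed power of the length) then holds precisely for $v$ above a threshold $\asymp t^{4/13}$; the at most $t^{4/13+o(1)}$ terms with $n$ below this threshold I would sum directly, at cost $t^{4/13+o(1)}$, while for $n$ above it the work is $O(\log t)$ (ranges) $\times\,t^{1/4}$ (blocks per range) $\times\,O_\lambda(\log t)$ (monomials per block) $\times\,\mathrm{poly}(\log t)$, that is $t^{1/4+o_\lambda(1)}$. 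Altogether this is $t^{4/13+o_\lambda(1)}$ operations; carrying all quantities to $O((\log t)^2)$ bits keeps the accumulated rounding below $t^{-\lambda}$, the powers of $\log t$ (number of ranges, number of Taylor terms, recursion depth inside the cubic-sum algorithm) together with the $O_\lambda$ dependence feed into the $\kappa_i$ and $A_i$, and the stated precomputation and storage are those needed to build the auxiliary tables of the cubic-sum algorithm (and of the quadratic-sum algorithm it invokes), which have $O(t^{4/13+o(1)})$ entries.

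The hard part will be the choice of $K$ in the last step: reconciling the two competing demands --- that the quartic-and-higher part of the phase be $O(1)$ over a block, which forces $K\lesssim v\,t^{-1/4}$, and that $\alpha_3$ be admissible for the cubic-sum algorithm, which for small $v$ pushes the other way --- and proving they stay jointly satisfiable exactly down to the sharp threshold. This is precisely what produces the exponent $4/13$ rather than the $1/4$ a naive block count would suggest, and below the threshold one must arrange the fallback to direct summation so that the total is still $t^{4/13+o(1)}$. The rest --- propagating the target accuracy $\pm\,t^{-\lambda}$ through the Riemann--Siegel remainder, the cubic truncation, the polynomial approximation of $h$, the internal errors of the cubic-sum algorithm, and the $O((\log t)^2)$-bit arithmetic, uniformly in $t$ --- is routine in spirit but must be assembled with care.
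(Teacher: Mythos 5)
Your overall architecture matches the paper's: Riemann--Siegel main sum, a short head summed directly, dyadic ranges subdivided into blocks of a common length $K(V)$, truncation of the phase to a cubic plus a polynomial-in-$k/K$ correction to the amplitude, and Theorem~\ref{cubicthm} for the resulting cubic sums. But the block size you propose, $K(V)\asymp v\,t^{-1/4}$, is too large, and it breaks both quantities you then assert.

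Theorem~\ref{cubicthm} requires $c\le K^{\mu-3}$ with $\mu\le 1$, i.e.\ $cK^2\le 1$. With $c\asymp t/v^3$ and $K\asymp v\,t^{-1/4}$ this forces $v\gtrsim t^{1/2}$, not $v\gtrsim t^{4/13}$; so under your choice the cubic-sum algorithm is inadmissible on essentially the whole range and the head you would have to sum directly has length $\asymp \sqrt t$. More fundamentally, the precomputation cost in Theorem~\ref{cubicthm} is $\approx K^{4\mu}$ with $K^{\mu}\gtrsim cK^3$; with your $K$ you observe $cK^3\asymp t^{1/4}$, which gives $K^{4\mu}\approx t$, far over the $t^{4/13}$ budget. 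You also identify two constraints --- quartic tail $O(1)$ and cubic coefficient admissible --- as ``competing,'' but both push $K$ downward (the first gives $K\lesssim v\,t^{-1/4}$, the second $K\lesssim v^{3/2}t^{-1/2}$); they are not in tension with each other.

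The operative tension, and the step you flagged as ``the hard part,'' is between the block count $v/K$ (which wants $K$ large) and the precomputation cost $(cK^3)^4=(tK^3/v^3)^4$ (which wants $K$ small). Equating these yields $K\asymp v\,t^{-4/13}$, not $v\,t^{-1/4}$. With this choice one has $cK^3\asymp t^{1/13}$ uniformly over the dyadic ranges, so the precomputation costs $t^{4/13+o(1)}$, each range carries $\asymp t^{4/13}$ blocks, $K\ge 1$ precisely when $v\gtrsim t^{4/13}$ (the length of the head summed directly), and $K\lesssim v\,t^{-1/4}$ holds with room to spare since $4/13>1/4$. In short, the exponent $4/13$ comes from balancing the number of blocks against the FFT precomputation cost, not from saturating the quartic-tail constraint. (The paper also handles the precomputation in two regimes, direct FFT on a $\{p/K,q/K^2,r/K^3\}$ lattice for $K\le t^{1/13}$ and Theorem~\ref{cubicthm} for $K>t^{1/13}$, both $t^{4/13+o(1)}$, but that is a secondary refinement once the correct $K$ is in place.)
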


We did not try to obtain numerical values for the constants $\kappa_1$, $\kappa_2$, and $\kappa_3$, in the statement of the theorem. With some optimization, it is likely each can be taken around $4$. We remark that a bit-complexity bound follows routinely from the arithmetic operations bound because all the numbers occurring in our algorithm have $O_{\lambda}((\log t)^2)$ bits. Also, our algorithm can be modified easily so that arithmetic is performed using $O_{\lambda}(\log t)$ bits, which is what one should do in a practical version. 

All of our methods immediately generalize off the critical line  since the Riemann-Siegel formula (\ref{eq:rsform}) can be generalized there. The methods exclusively tackle the issue of accurately computing the \textit{main sum} (\ref{eq:rsform1}) in the Riemann-Siegel formula, so they are quite independent of the precise choice of the remainder function there.

There are several known methods to compute $\zeta(1/2+it)$ to within $\pm\, t^{-\lambda}$ for any $t>1$ and any fixed $\lambda$. An elementary such method is usually derived from the Euler-Maclaurin summation formula. The majority of the computational effort in that method is in computing a main sum of length $O_{\lambda}(t)$ terms, where each term is of the form $n^{-1/2}\exp(it\log n)$; see~\cite{Ed} and~\cite{Ru} for a detailed description. 

Another method to compute $\zeta(1/2+it)$ relies on a straightforward application of the Riemann-Siegel formula, which has a main sum of length $\lfloor \sqrt{t/(2\pi)}\rfloor$ terms. A simplified version of that formula on the critical line is:

\begin{equation} \label{eq:rsform}
\zeta(1/2+it)= e^{-i\theta(t)}\,\Re \left(  2\,e^{-i\theta(t)}\sum_{n=1}^{n_1} n^{-1/2} \exp(it \log n)\right) +\Phi_\lambda(t)+O(t^{-\lambda})\,,
\end{equation}

\noindent
where $n_1:=\lfloor \sqrt{t/(2\pi)}\rfloor$, and $\theta(t)$ and $\Phi_{\lambda}(t)$ are certain generally understood functions. Odlyzko and Sch\"onhage~\cite{OS} showed that the rotation factor $\theta(t)$ and the remainder term $\Phi_{\lambda}(t)$ can both be evaluated to within $\pm\, t^{-\lambda}$ for any $\lambda$ using $t^{o_{\lambda}(1)}$ operations on numbers of $O_{\lambda}(\log t)$ bits. So to calculate $\zeta(1/2+it)$ using (\ref{eq:rsform}) directly, the bulk of the computational effort is exerted on the main sum, which is:

\begin{equation} \label{eq:rsform1}
\sum_{n=1}^{n_1} n^{-1/2} \exp(it \log n)\,,\qquad n_1:=\lfloor \sqrt{t/(2\pi)}\rfloor\,.
\end{equation}

Odlyzko and Sch\"onhage~\cite{OS} derived a practical algorithm to \textit{simultaneously} compute any $\lfloor T^{1/2}\rfloor$ values of \mbox{$\zeta(1/2+it)$} in the interval $t\in [T,T+T^{1/2}]$, to within $\pm\, T^{-\lambda}$ each, using $T^{1/2+o_{\lambda}(1)}$ arithmetic operations on numbers of $O_{\lambda}(\log T)$ bits, and requiring $T^{1/2+o_{\lambda}(1)}$ bits of storage. The Odlyzko-Sch\"onhage algorithm does not reduce the cost of a \textit{single} evaluation of zeta because it requires numerically evaluating a certain sum of length about $\sqrt{t/(2\pi)}$ terms, which is the same length as the Riemann-Siegel main sum (\ref{eq:rsform1}).

Sch\"onhage~\cite{Sc} improved the complexity of a single evaluation of zeta to $t^{3/8+o_{\lambda}(1)}$ operations on numbers of $O_{\lambda}(\log t)$ bits, and requiring $t^{3/8+o_{\lambda}(1)}$ bits of storage. Sch\"onhage~\cite{Sc} employed the Fast Fourier Transform (FFT) and subdivisions of the main sum in the Riemann-Siegel formula to derive his algorithm. 

Heath-Brown~\cite{HB} later presented a method that further lowered the cost of a single evaluation to about $t^{1/3}$ operations. He described the approach in the following way:\\

\begin{quote}
The underlying idea was to break the zeta-sum into $t^{1/3}$ subsums of  length $t^{1/6}$, on each of which $\exp({it\log(n+h)})$ could be approximated by a quadratic exponential $e(Ah+Bh^2+f(h))$ with $f(h)=O(1)$.  One would then pick a rational approximation $a/q$ to $B$ and write the sum in terms of  complete Gauss sums to \mbox{modulus $q$}.

This is motivated by section 5.2 in Titchmarsh, but using explicit formulae with Gauss sums in place of Weyl squaring.\\

\end{quote}

The problem of numerically evaluating $\zeta(1/2+it)$ was also considered by Turing~\cite{Tu}, Berry and Keating~\cite{BK}, and Rubinstein~\cite{Ru}, among others. 

The work of~\cite{Sc} and~\cite{HB} (see also~\cite{Ti} p.99, and \textsection{2}) makes it quite apparent that a possible approach to improving the complexity of computing zeta is to find efficient methods to numerically evaluate exponential sums of the form

\begin{equation} \label{eq:gensum}
\frac{1}{K^j}\sum_{k=1}^{K} k^j \exp(2\pi i f(k)), \qquad f(x) \in \mathbb{R}[x]\,.
\end{equation}

\noindent
This is our approach to improving the complexity of computing zeta. We derive algorithms that enable faster evaluations of the sum (\ref{eq:gensum}) when $f(x)$ is a quadratic polynomial or a cubic polynomial, with additional restrictions on the size of the cubic coefficient in latter. The basic idea is to apply Poisson summation to (\ref{eq:gensum}) to obtain a shorter exponential sum of a similar type. This is followed by an intervention that suitably normalizes the arguments of the new sum, then another application of Poisson summation, which yields yet a shorter sum, and so on. Notice the reason that repeated applications of Poisson summation do not trivialize (i.e. bring us back to where we started) is precisely because we intervene in between consecutive applications. 

We explain the relation between the exponential sums (\ref{eq:gensum}) and the zeta function in \textsection{2}. In the same section, we outline three methods to compute $\zeta(1/2+it)$ to within $\pm\,t^{-\lambda}$ in asymptotic complexities $t^{2/5+o_{\lambda}(1)}$, $t^{1/3+o_{\lambda}(1)}$, and $t^{4/13+o_{\lambda}(1)}$. The first method is rather simple, while the second and third methods are substantially more complicated. The second method, which has complexity $t^{1/3+o_{\lambda}(1)}$, relies on an efficient and quite involved algorithm to compute \textit{quadratic exponential sums}. These are sums of the form (\ref{eq:gensum}) with $f(x)$ a (real) quadratic polynomial. A nearly-optimal algorithm to compute such sums has already been derived in~\cite{Hi}. The third method, which has complexity $t^{4/13+o_{\lambda}(1)}$, relies on an efficient algorithm to compute \textit{cubic exponential sums} with a small cubic coefficient (see \textsection{2} for precise details). This algorithm is developed in \textsection{3}, \textsection{4}, and \textsection{5} here. 

We wish to make two remarks about the structure and the presentation of the cubic sums algorithm, which, as mentioned earlier, is the essential component of our $t^{4/13+o_{\lambda}(1)}$ method. First, as discussed in \textsection{3}, the algorithm generalizes that of~\cite{Hi} to compute quadratic exponential sums, except it incorporates the FFT and a few additional ideas. Second, our motivation for deriving the algorithm is to enable efficient enough evaluations of cubic sums rather than obtain elegant asymptotic expressions for them. So, for example, we describe in \textsection{4} how the algorithm involves evaluating certain exponential integrals, then we show in \textsection{5} how to compute each of these integrals separately. By doing so, however, we obscure that some of these integrals might combine in a manner that collapses them. We do not concern ourselves with combining or simplifying such integrals since this does not improve the complexity exponent of our method to compute zeta.

To summarize, \textsection{4} and \textsection{5} communicate the details of the cubic sums algorithm. If one merely wishes to overview the general structure without getting too involved in the details, then \textsection{3} might suffice.

\section{Outline of fast methods to compute $\zeta(1/2+it)$}

We first consider the following simplified, but prototypical, situation. Suppose we wish to numerically evaluate the sum

\begin{equation} \label{eq:tailbook}
\sum_{n=P}^{2P-1} \exp(it \log n)\,,\qquad P:=P_t= \lceil 0.5 \sqrt{t/(2\pi)} \rceil\,.
\end{equation}

\noindent
The sum (\ref{eq:tailbook}) is basically the last half of the Riemann-Siegel main sum for $\zeta(\sigma+it)$ on the line $\sigma=0$. We initially restrict our discussion to this line, and for the last half of the main sum, because the treatment of other values of $\sigma$, as well as of the remainder of the main sum, is completely similar, albeit more tedious. (See the discussion following Theorem~\ref{cubicthm} for a detailed presentation on the critical line for the full main sum.) 
 
A direct evaluation of the sum (\ref{eq:tailbook}) to within $\pm\, t^{-\lambda}$ requires $P^{1+o_{\lambda}(1)}$ operations on numbers of $O_{\lambda}(\log t)$ bits. However, one observes that individual ``blocks'' in that sum have a common structure, and that they all can be expressed in terms of exponential sums of the form (\ref{eq:gensum}). The new algorithms take advantage of this common structure to obtain substantially lower running times.  

Specifically, we divide the sum (\ref{eq:tailbook}) into consecutive blocks of length $K:=K_t$ each, where, for simplicity, we assume $P$ is multiple of $K$. So the sum (\ref{eq:tailbook}) is equal to a sum of $P/K$ blocks:

\begin{equation} \label{eq:blockbook}
\sum_{n=P}^{P+K-1} \exp(it \log n)+\sum_{n=P+K}^{P+2K-1}\exp(it \log n)+\cdots+\sum_{n=2P-K}^{2P-1} \exp(it \log n)\,.
\end{equation}

Let $v:=v_{P,K,r}=P+(r-1)K-1$. Then the $v^{th}$ block (the one starting at $v$) can be written in the form

\begin{equation} \label{eq:taybooktemp}
\sum_{k=0}^{K-1} \exp(it\log(v+k)) = \exp(it \log v) \sum_{k=0}^{K-1} \exp(it \log (1+k/v))\,.
\end{equation}

\noindent
Since $K-1<v$, we may apply Taylor expansions to $\log(1+k/v)$ to obtain

\begin{equation} \label{eq:taybook}
\begin{split}
\sum_{k=0}^{K-1} \exp(it \log (v+k))= \exp(it\log v)\sum_{k=0}^{K-1} \exp\left(\frac{it k}{v}- \frac{it k^2}{2v^2} +\frac{it k^3}{3v^3}-\frac{it k^4}{4v^4}+\ldots\right)\,.
\end{split} 
\end{equation}

There is much flexibility in which block sizes $K$ can be used. For example, if we choose $P t^{-1/3}< K\le P t^{-1/3}+1$, then the sum (\ref{eq:taybook}) can be reduced to a linear combination of quadratic exponential sums. Because with this choice of $K$, the ratio $K/P$ is very small and in particular $t K^3/P^3 = O(1)$, so the terms in the exponent on the r.h.s. of (\ref{eq:taybook}) become of size $O(1)$ starting at  the cubic term $itk^3/(3v^3)$ (more precisely, the $r^{th}$ term, which is  $(-1)^{r+1}it k^r/(rv^r)$, is of size $\le 2^r t^{1-r/3}$). This suggests the cubic and higher terms in (\ref{eq:taybook}) should be expanded away as polynomials in $k$ of low degree (say degree $J$). This in turn allows us to express the $v^{th}$ block (\ref{eq:taybook}) as a linear combination of quadratic exponential sums

\begin{equation} \label{eq:prelimbook}
\frac{1}{K^j}\sum_{k=0}^{K-1} k^j \exp\left(2\pi i a_{t,v} k+2\pi i b_{t,v} k^2\right)\,,\qquad j=0,1,\ldots,J\,,
\end{equation}

\noindent
plus a small error that we can easily control via our choice of $J$. It is easy to see that the coefficients of said linear combination are quickly computable, and are of size $O(1)$ each.

If the condition $P t^{-1/3}< K\le P t^{-1/3}+1$ is replaced by $P t^{-1/4}< K\le P t^{-1/4}+1$ say, then the cubic term $itk^3/(3v^3)$ is no longer of size $O(1)$, but the quartic term $-i t k^4/(4v^4)$ still satisfies the bound $O(1)$. Thus, on following a similar procedure as before, each block (\ref{eq:taybook}) can be reduced to a linear combination of cubic sums (instead of quadratic sums):

\begin{equation} \label{eq:cubicbook}
\frac{1}{K^j}\sum_{k=0}^{K-1}k^j \exp\left(2\pi i a_{t,v} k+2\pi i b_{t,v} k^2+2\pi i c_{t,v} k^3\right)\,,\qquad j=0,1,\ldots,J\,, 
\end{equation}

\noindent
where the cubic coefficient $c_{t,v}:=t/(6\pi v^3)$, and $K\approx t^{1/4}$. Notice by a straightforward calculation, we have $0\le c_{t,v}\le K^{-2}$, so the range where $c_{t,v}$ can assume values is quite restricted. By comparison, $a_{t,v}$ and $b_{t,v}$ can fall anywhere in $[0,1)$. 


\begin{table}[ht]
\caption{\footnotesize Choosing $K\approx t^{\beta}$ in (\ref{eq:blockbook}), where $P\approx t^{1/2}$, yields a total of $\approx t^{1/2-\beta}$ blocks, each of which can be expressed as a linear combination of the exponential sums (\ref{eq:genericbook}). Below are examples of  the polynomial $f_{\beta,t,v}(x)$ in (\ref{eq:genericbook}) for various choices of $\beta$.} \label{zetamethod1}
\begin{tabular}{|l|l||l|}
\hline
$\beta$ & $1/2-\beta$ & $f_{\beta,t,v}(x)$ \\
\hline
$1/10$ & $2/5$ & $a_{t,v} x+ b_{t,v} x^2$\\
$1/8$ & $3/8$ & $a_{t,v} x+ b_{t,v} x^2$\\
$1/6$ & $1/3$ & $a_{t,v} x+b_{t,v} x^2$ \\
$5/26$ & $4/13$ & $a_{t,v} x+b_{t,v} x^2+c_{t,v}x^3$ \\
$1/5$ & $3/10$ & $a_{t,v} x +b_{t,v} x^2 +c_{t,v}x^3$\\
$1/4$ & $1/4$ & $a_{t,v} x+b_{t,v} x^2+c_{t,v}x^3$ \\
$3/10$ & $1/5$ & $a_{t,v} x+b_{t,v} x^2+c_{t,v}x^3+d_{t,v}x^4$ \\
$1/3$ & $1/6$ & $a_{t,v} x+b_{t,v} x^2+c_{t,v}x^3+d_{t,v}x^4 +e_{t,v} x^5$ \\
\hline
\end{tabular}
\end{table}

\begin{table}[ht]
\caption{\footnotesize Bounds, in terms of $K$, on the absolute values of the coefficients of the polynomial $f_{\beta,t,v}(x)$ in (\ref{eq:genericbook}).}\label{zetamethod2}
\begin{tabular}{|l|l||lllllll|}
\hline
$\beta$ & $1/2-\beta$  &$a_{t,v}$ & $\,$ & $b_{t,v}$ & $\,$ & $c_{t,v}$ & $d_{t,v}$ & $e_{t,v}$ \\
\hline
$1/10$ &$2/5$ &$1$ &$\,\,$ & $1$&$\,\,$& \, & \,&\,\\
$1/8$ &$3/8$ &$1$ &$\,\,$ & $1$&$\,\,$& \, & \,&\,\\
$1/6$&$1/3$ &$1$ &$\,\,$ & $1$ &$\,\,$& \, & \,&\,\\
$5/26$&$4/13$ &$1$ &$\,\,$& $1$ &$\,\,$& $K^{-13/5}$ &\,&\,\\
$1/5$ &$3/10$ &$1$ &$\,\,$& $1$ &$\,\,$& $K^{-5/2}$ &\,&\,\\
$1/4$ &$1/4$ &$1$ &$\,\,$& $1$ &$\,\,$& $K^{-2}$ &\,&\,\\
$3/10$ &$1/5$ &$1$&$\,\,$ & $1$ &$\,\,$& $K^{-5/3}$ & $K^{-10/3}$ &\,\\
$1/3$ & $1/6$&$1$ &$\,\,$& $1$ &$\,\,$& $K^{-3/2}$ & $K^{-6/2}$ & $K^{-9/2}$\\
\hline
\end{tabular}
\end{table}

It is plain the procedure described so far can be continued further under appropriate hypotheses. In general, given $\beta\in (0,1/2)$, if the block size $K:=K_{\beta,t}$ in (\ref{eq:blockbook}) is chosen according to $P t^{\beta-1/2} < K \le P t^{\beta-1/2}+1$, we obtain a total of $\approx t^{1/2-\beta}$ blocks, each of which can be expressed as a linear combination of exponential sums of degree $d:=d_{\beta}=\lceil 1/(1/2-\beta)\rceil-1$: 

\begin{equation} \label{eq:genericbook}
\frac{1}{K^j}\sum_{k=0}^{K-1} k^j \exp(2\pi i f_{\beta,t,v}(k))\,,\qquad j=0,1,\ldots,J\,,
\end{equation}

\noindent
plus a small error of size $O(t^{\beta+J(1-(d+1)(1/2-\beta))}/\lfloor J/(d+1)\rfloor !)$. Tables~\ref{zetamethod1} and~\ref{zetamethod2} provide examples of the (degree-$d_{\beta}$) polynomial $f_{\beta,t,v}(x)$ for various choices of $\beta$. Notice that the error in approximating each block by a linear combination of the sums (\ref{eq:genericbook}) declines extremely rapidly with $J$. For example, taking $J = J_{\beta,t,\lambda}:= \lceil (d+1)(\lambda+3)\log t \rceil$ enables the computations of the $v^{th}$ block to within $\pm\, t^{-\lambda-2}$ for any fixed $\lambda$. 

Furthermore, it is straightforward to show, under mild hypotheses, that if one is capable of evaluating the exponential sums (\ref{eq:genericbook}), to within $\pm\, t^{-\lambda-2}$ each, for all $v\in \{P,P+K,\ldots,2P-K\}$ (which is a total of $\approx (J+1) t^{1/2-\beta}$ such sums), using $t^{1/2-\beta+o_{\lambda}(1)}$ time, then the entire Riemann-Siegel main sum, rather than its last half (\ref{eq:blockbook}) only, can be computed to within $\pm\, t^{-\lambda}$ in $t^{1/2-\beta+o_{\lambda}(1)}$ time. In particular, the column $1/2-\beta$ in tables~\ref{zetamethod1} and~\ref{zetamethod2} is the complexity exponent with which $\zeta(\sigma+it)$ can be computed on the line $\sigma=0$ should the sums  (\ref{eq:genericbook}) lend themselves to an efficient enough computation (in the sense just described). It is clear the restriction to the line $\sigma=0$ is not important, and similar conclusions can be drawn for other values of $\sigma$. 

Reformulating the main sum of the Riemann-Siegel formula in terms of quadratic exponential sums was carried out by Titchmarsh~\cite{Ti} (page 99), and later by Sch\"onhage ~\cite{Sc} and Heath-Brown~\cite{HB}. But higher degree exponential sums were not considered in these approaches.  Sch\"onhage sought an efficient method to evaluate quadratic exponential sums in order to improve the complexity of computing the zeta function. He observed  that if the values of the quadratic sums 

\begin{equation}\label{eq:schquad} 
F(K,j;a,b):=\frac{1}{K^j}\sum_{k=0}^K k^j \exp\left(2\pi i a k+2\pi i b k^2\right)\,,\qquad j=0,1,\ldots,J\,,
\end{equation}

\noindent
and several of their partial derivatives (which are of the same form), were known at all points $(a,b)$ of the lattice

\begin{equation}
\mathcal{L}=\{(p/K,q/K^2): 0\le p<K, 0\le q<K^2\}\,, 
\end{equation}

\noindent
then values of $F(K,j;a,b)$ elsewhere can be calculated quickly via a Taylor expansion like:

\begin{equation} \label{eq:tay1}
F(K,j;a,b)= \sum_{r=0}^{\infty} \frac{1}{r!} \sum_{l=0}^{r} \binom{r}{l} F(K,j+2r-l;p_0/K,q_0/K) (\Delta a)^l (\Delta b)^{r-l}\,,
\end{equation}

\noindent
where $\Delta a:=2\pi K(a-p/K)$, $\Delta b := 2\pi K^2(b-q/K^2)$, $p_0/K$ is a member of $\mathcal{L}$ closest to $a$, and $q_0/K^2$ is a member of $\mathcal{L}$ closest to $b$. This is because $|\Delta a|$ and $|\Delta b|$ are both bounded by $\pi$, so $|\binom{r}{l}(\Delta a)^l (\Delta b)^{r-l}|\le (2\pi)^r$, which implies the $r^{th}$ term in (\ref{eq:tay1}) is of size $\le K (2\pi)^r/r!$. Therefore, expansion (\ref{eq:tay1}) can be truncated early, say after $J':=J'_{t,\lambda}=\lceil 100(\lambda+1)\log t\rceil=t^{o_{\lambda}(1)}$ terms, which yields a truncation error of size $\le K t^{-\lambda-2}$. 

In particular, Sch\"onhage observed, if for each integer  $0\le m \le J+2J'$ the values  $F(K,m;p/K,q/K)$ are precomputed to within $\pm\,t^{-\lambda-2}$ on all of $\mathcal{L}$, then $F(K,j;a,b)$ can be computed for any $(a,b) \in [0,1)\times [0,1)$ to within $\pm\,t^{-\lambda-1}$ using expansion (\ref{eq:tay1}) in about $(J'+1)^2=t^{o_{\lambda}(1)}$ steps. Consequently, provided $K \le P t^{-1/3} +1$, which ensures that each block in (\ref{eq:blockbook}) can be approximated accurately by a linear combination of the quadratic sums (\ref{eq:schquad}), then (\ref{eq:blockbook}) can be computed to within $\pm t^{-\lambda}$ in about $(J+1)(J'+1)^2P/K=t^{o_{\lambda}(1)} P/K$ steps. 

Letting $\mathcal{C}:=\mathcal{C}_{K,J,J'}$ denote the presumed cost of computing the values of $F(K,m;a,b)$ on all of $\mathcal{L}$ and for $0\le m \le J+2J'$, we deduce that the choice of $K$ minimizing the complexity exponent for computing zeta is essentially specified by the condition $\mathcal{C}= P/K$, since this is when the precomputation cost $\mathcal{C}$ is balanced against the cost of computing the sum (\ref{eq:blockbook}) in blocks. Notice if $K$ is small,  then the number of blocks $P/K$ will be much larger than the size of the lattice $\mathcal{L}$. So there will be significant overlaps among the quadratic sums arising from the blocks in the sense many of them can be expanded about the same point $(p/K,q/K^2)\in \mathcal{L}$. And this should lead to savings in the running time.

To reduce the precomputation cost $\mathcal{C}$, Sch\"onhage observed that the value of $F(K,j;a,b)$ at $(p/K,q/K^2)$ is the discrete Fourier transform, evaluated at $-p/K$, of the sequence of points 

\begin{equation}
\left\{(k/K)^j \exp(2\pi i q k^2/K^2): 0\le k<K\right\}\,.
\end{equation}

\noindent
So for each $0\le m\le J+2J'$, and each $0\le q<K^2$, one can utilize the FFT to compute $F(K,m;p/K,q/K^2)$ for all $0\le p<K$ in $K^{1+o_{\lambda}(1)}$ steps. Since there are $J+2J'+1=t^{o_{\lambda}(1)}$ relevant values of $m$, and $K^2$ relevant values of $q$, then the total cost of the precomputation is about $K^{3+o_{\lambda}(1)}$ steps. The condition $\mathcal{C}=P/K$ thus reads $K^{3+o_{\lambda}(1)}=P/K$, and so one chooses $K=P^{1/4}$. This implies $P/K= P^{3/4+o_{\lambda}(1)}=t^{3/8+o_{\lambda}(1)}$, yielding Sch\"onhage's $t^{3/8+o_{\lambda}(1)}$ method to compute the sum (\ref{eq:blockbook}), hence, by a slight extension, the zeta function itself. 

It is possible to improve the complexity of computing the zeta function via this approach while avoiding the FFT, which can be advantageous in practice. Indeed, if one simply evaluates (\ref{eq:schquad}) at all points of $\mathcal{L}$ in a direct way, then the total cost of the precomputation is about $(J+2J'+1)K^{4+o_{\lambda}(1)}$ steps. The condition $\mathcal{C}=P/K$ hence reads $K= P^{1/5}$, which gives a $t^{2/5+o_{\lambda}(1)}$ method to compute the zeta function.

In order to achieve a $t^{1/3+o_{\lambda}(1)}$ complexity via this approach, the precomputation cost $\mathcal{C}$ must be lowered to about $K^{2+o_{\lambda}(1)}$ operations. However, it is not possible to lower the precomputation cost to $K^2$ operations (or to anything $\le K^3$ operations) if one insists on precomputing the quadratic sum on all of $\mathcal{L}$. This difficulty motivated our search in~\cite{Hi} for efficient methods to compute quadratic exponential sum, which led to:

\begin{thm}[Theorem~1.1 in~\cite{Hi}] \label{quadraticthm}
There are absolute constants $\kappa_4$, $\kappa_5$, $A_5$, $A_6$, and $A_7$, such that for any integer $K>0$, any integer $j \ge 0$, any positive $\epsilon<e^{-1}$, any $a,b \in [0,1)$, and with $\nu:=\nu(K,j,\epsilon)=(j+1)\log (K/\epsilon)$,  the value of the function $F(K,j;a,b)$ can be computed to within $\pm \, A_5\, \nu^{\kappa_4} \epsilon$ using $\le A_6\, \nu^{\kappa_5}$ arithmetic operations on numbers of $\le A_7\, \nu^2$ bits.
\end{thm}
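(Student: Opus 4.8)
The plan is to establish Theorem~\ref{quadraticthm} by exhibiting a recursive algorithm which, at each stage, replaces the evaluation of $F(K,j;a,b)$ by the evaluation of a quadratic exponential sum with ``smaller'' parameters, plus a bounded number of incomplete exponential integrals that are handled by a separate subroutine. A key organizing remark: one should carry not a single value but the whole vector $\{F(K,j';a,b):0\le j'\le j+O(\nu)\}$, since the transformations below couple sums of different monomial weights; extending the range by $O(\nu)$ is harmless because the higher-weight sums are still $O(K)$ in size and, to accuracy $\epsilon$, only the first $O(\nu)$ of the relevant Taylor coefficients matter. After reducing $a,b$ modulo $1$ one may assume $a,b\in[0,1)$, and after a reindexing $k\mapsto K-k$ one may fold $b$ into $[0,1/2]$.

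First I would dispose of the base cases. If $K\le C\nu$ for a suitable absolute constant $C$, sum directly. If $b$ is so small that $bK^2=O(1)$ (so the quadratic term is bounded over the range of summation), expand $\exp(2\pi i bk^2)$ in a power series truncated after $O(\nu)$ terms, reducing to ``linear'' sums $\sum_k k^{j'}e(ak)$, which are evaluated in closed form by differentiating a geometric series, or, when $a$ itself is near $0$ or $1$, by Euler--Maclaurin. Similarly when $b$ is near $1/2$ the quadratic factor is nearly $\pm1$-periodic and the same linear reduction applies.

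The heart of the matter is the recursion step, used when $b$ is bounded away from $0$ and from $1/2$. Here I would invoke Poisson summation (the van der Corput $B$-process) applied to the smooth model $x^{j}e(ax+bx^2)$ on $[0,K]$. This expresses $F(K,j;a,b)$ as: (i) a main term which, after rescaling, is again a quadratic exponential sum $F(K',j';\widetilde a,\widetilde b)$ whose new length is $K'=O(bK)+O(\nu)$ and whose new quadratic coefficient is, up to an integer shift, $\widetilde b\equiv -1/(4b)$; plus (ii) $O(1)$ endpoint and stationary-phase correction terms, each an incomplete Gaussian-type integral $\int_0^w x^{j'}e(\alpha x+\beta x^2)\,dx$; plus (iii) a tail error from truncating the dual sum, which decays faster than geometrically once one passes $O(\nu)$ terms beyond the last relevant frequency, hence is $\le\epsilon$. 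Since $b\in[0,1/2]$ we have $K'\le K$, and the map $b\mapsto \{-1/(4b)\}$ interleaved with the folding operation is (a variant of) the Gauss continued-fraction map; one shows a suitable complexity measure of the pair $(K,b)$ contracts uniformly, so after $O(\nu)$ iterations one necessarily lands in a base case. (When $b$ is near a rational with very small denominator one may instead split the sum into complete Gauss sums and apply reciprocity; this is an alternative reduction that reaches the same base cases.) The incomplete integrals in (ii) form a subroutine: complete the square to reduce to the complementary error function along an appropriate contour, or, when $\beta$ is tiny, expand directly; either way $O(\nu^{O(1)})$ operations on $O(\nu^2)$-bit numbers suffice, and all $O(\nu)$ weights are produced together.

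The remaining work is bookkeeping on how error and precision propagate. Each of the $O(\nu)$ iterations injects an error $\le\epsilon\,\mathrm{poly}(\nu)$ from the truncations in (ii)--(iii) and from rounding, and multiplies ambient quantities by factors of magnitude at most $\mathrm{poly}(\nu)$ that require at most $O(\nu)$ additional bits; summing the resulting near-geometric series yields a total error $\le A_5\,\nu^{\kappa_4}\epsilon$, while composing $O(\nu)$ elementary reductions forces a working precision of $O(\nu^2)$ bits, and the operation count is $O(\nu)$ iterations times $O(\nu^{O(1)})$ per iteration (dominated by evaluating the length-$O(\nu)$ vector of incomplete integrals), i.e.\ $\le A_6\,\nu^{\kappa_5}$. \emph{The step I expect to be the main obstacle} is proving that the recursion depth is genuinely $O(\nu)$: that is, pinning down a monovariant for the Gauss-map-like dynamics of $(K,b)$ that contracts uniformly even in the delicate regimes where $b$ approaches $0$, approaches $1/2$, or approaches a rational with small denominator, together with the attendant uniform bounds on the incomplete exponential integrals and on the remainders of their Taylor and asymptotic expansions, since it is precisely those bounds that keep the per-step error $\le\mathrm{poly}(\nu)\epsilon$ and cap the precision at $O(\nu^2)$ bits.
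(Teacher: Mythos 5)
Your architecture is the right one, and it is essentially the argument the paper attributes to \cite{HI} and sketches in Section~3 (note the paper does not prove this theorem here at all --- it is imported verbatim as Theorem~1.1 of \cite{HI}): repeated Poisson summation/van der Corput iteration producing a shorter quadratic sum plus a remainder built from incomplete Gaussian-type integrals, with error and precision bookkeeping across the iterations. However, the step you single out as the main obstacle --- finding a monovariant for the ``Gauss-map-like'' dynamics of $(K,b)$ that contracts uniformly --- is a problem you have created for yourself by folding $b$ only into $[0,1/2]$. Using periodicity together with the identity $k^2\equiv k \pmod 2$ (so $e^{2\pi i k^2/2}=e^{2\pi i k/2}$ can be absorbed into the linear coefficient), and conjugating if necessary, one normalizes $b$ into $[0,1/4]$ at the start of every iteration; the dual sum then has length $\lfloor 2bK\rfloor \le K/2$, so the length is at least halved each step and the recursion depth is $\le \log_2 K\le \nu$ with no dynamical argument whatsoever. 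The regimes $b$ near $0$ (handled by Euler--Maclaurin / direct expansion, as you say) and $K$ small are the only boundary cases; ``$b$ near a rational with small denominator'' plays no role and the Gauss-sum/reciprocity alternative is not needed.

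Conversely, you substantially under-weight what the paper explicitly identifies as the real technical hurdle: proving that the remainder $\mathcal{R}_1$ --- the contribution of the Poisson terms with no saddle point, which you compress into a one-line subroutine --- can be computed to within $\pm\epsilon$ in poly-log time. This is where most of the work in \cite{HI} lives: reducing $\mathcal{R}_1$ to a controlled linear combination of incomplete-Gamma-type integrals of the restricted form (\ref{eq:igf}) with $z=O(\log(K/\epsilon)^{\tilde\kappa})$, establishing uniform bounds on the coefficients so that the per-iteration error is $\mathrm{poly}(\nu)\,\epsilon$, and verifying that $O(\nu^2)$ bits of working precision suffice after $\log_2 K$ compositions. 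Your sketch of that subroutine (complete the square, shift the contour, or expand when $\beta$ is tiny) is directionally correct but is precisely the part that cannot be waved through if one wants the explicit constants $\kappa_4,\kappa_5,A_5,A_6,A_7$ of the statement.
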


\noindent
Theorem~\ref{quadraticthm} yields a $t^{1/3+o_{\lambda}(1)}$ method to compute zeta, as we describe, in detail, later in this section.

In \textsection{3}, \textsection{4}, \& \textsection{5}, we generalize Theorem~\ref{quadraticthm} to cubic exponential sums 

\begin{equation}\label{eq:hkjabc}
H(K,j;a,b,c):=\frac{1}{K^j}\sum_{k=0}^K k^j \exp(2\pi i a k +2\pi i b k^2+2\pi i c k^3)\,,
\end{equation}

\noindent
with a small cubic coefficient $c$. Unlike the the algorithm for quadratic sums though, where neither a precomputation nor an application of the FFT is necessary, our algorithm for cubic sums does require a precomputation and, in doing so, relies on the FFT in a critical way. We prove:

\begin{thm} \label{cubicthm}
There are absolute constants $\kappa_6$, $\kappa_7$, $\kappa_8$, $\kappa_9$, $A_8$, $A_9$, $A_{10}$, $A_{11}$, and $A_{12}$, such that for any $\mu \le 1$, any integer $K>0$, any integer $j\ge 0$, any positive $\epsilon<e^{-1}$, any $a,b \in [0,1)$, any $c\in [0,K^{\mu-3}]$, and with $\nu:=\nu(K,j,\epsilon)=(j+1)\log(K/\epsilon)$, the value of the function $H(K,j;a,b,c)$ can be computed to within  $\pm \, A_8 \, \nu^{\kappa_6}\epsilon$ using $\le A_9\,\nu^{\kappa_7}$ arithmetic operations on numbers of $\le A_{10}\,\nu^2$ bits, provided a precomputation costing $\le A_{11}\, \nu^{\kappa_8}\,K^{4\mu}$ arithmetic operations, and requiring $\le A_{12}\,\nu^{\kappa_9} \,K^{4\mu}$ bits of storage, is performed. 
\end{thm}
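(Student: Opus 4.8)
The plan is to reduce a cubic sum with a small cubic coefficient to a family of quadratic sums, to which Theorem~\ref{quadraticthm} can be applied directly, and to use a precomputed table together with the FFT to handle the terms that cannot be absorbed into the quadratic reduction cheaply. Concretely, fix $c\in[0,K^{\mu-3}]$ so that $cK^3\le K^\mu$. One would like to split the range $0\le k\le K$ into consecutive subblocks of some length $L$, so that on each subblock, starting at $k=mL+\ell$ with $0\le \ell<L$, the cubic term $c(mL+\ell)^3$ expands as $cm^3L^3 + 3cm^2L^2\ell + 3cmL\ell^2 + c\ell^3$. The first piece depends only on $m$ (a constant within the subblock), the next two pieces are linear and quadratic in $\ell$ and can be folded into the linear and quadratic coefficients $a$ and $b$ of the $m$-th quadratic subsum, and the last piece $c\ell^3$ is genuinely cubic but now has magnitude $cL^3$. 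Choosing $L\asymp c^{-1/3}$, or more precisely $L$ of order $K^{1-\mu/3}$, forces $cL^3 = O(1)$, so $\exp(2\pi i c\ell^3)$ varies by a bounded amount over the subblock and can be expanded as a short Taylor polynomial in $\ell$ (of degree $O(\nu)$, i.e. $O((j+1)\log(K/\epsilon))$, as in the quadratic-case discussion around~(\ref{eq:prelimbook})), with all coefficients of size $O(1)$ and a tail error $\le \epsilon/\text{poly}(\nu)$. Each subblock thus becomes an $O(\nu)$-term linear combination of quadratic sums $F(L,i;a_m,b_m)$ and their derivatives, each computable to within $\pm\epsilon'$ in $\nu^{O(1)}$ operations by Theorem~\ref{quadraticthm}.

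The catch is that there are $K/L \asymp K^{\mu/3}$ subblocks, and processing each by $\nu^{O(1)}$ calls to the quadratic algorithm would cost $K^{\mu/3+o(1)}$ operations, not $\nu^{O(1)}$ as the theorem demands. This is where the precomputation and the FFT enter, following the same philosophy as Sch\"onhage's lattice idea reviewed in the excerpt. The key observation is that the quadratic parameters $(a_m,b_m)$ attached to the $m$-th subblock are, modulo $1$, affine-in-$m$ plus the $3cm^2L^2\ell$ contribution — that is, the relevant new linear and quadratic coefficients of the $m$-th quadratic sum run over a controlled, low-complexity set (a one-parameter family indexed by $m$, essentially $a_m \equiv a + 2bmL + 3cm^2L^2 \pmod 1$ and $b_m \equiv b + 3cmL \pmod 1$, up to the degree-$i$ weighting), and crucially the outer sum over $m$ of $\exp(2\pi i (aмL + bm^2L^2 + cm^3L^3))$ weighted by the subblock values is itself a quadratic/cubic exponential sum in $m$ of length $K^{\mu/3}$ with an even smaller cubic coefficient $cL^3 = O(1)$. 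So one recurses: the outer sum over $m$ has the same form as the original but with $K$ replaced by $K^{\mu/3}$ and with $\mu$ effectively replaced by a smaller value, and after $O(1/\mu)$ — hence $O(1)$, since $\mu\le 1$ — levels of this splitting the outer length drops below any fixed power and can be evaluated directly. The role of the FFT-backed precomputation of size $K^{4\mu}$ is to tabulate, once and for all, the values $F(L,i;p/L,q/L^2)$ on the relevant lattice for all the $O(\nu)$ needed weights $i$ — exactly as in the passage computing $F(K,v;p/K,q/K^2)$ via the FFT in $K^{3+o(1)}$ steps — so that each of the $K^{\mu/3}$ subblock evaluations reduces to a Taylor expansion~(\ref{eq:tay1}) about a nearby lattice point in $\nu^{O(1)}$ steps, and these can be summed against the weights using one more FFT. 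With $L\asymp K^{1-\mu/3}$, the lattice for the inner quadratic sum has about $L\cdot L^2 = L^3 \asymp K^{3-\mu}$ points per weight; one checks the bookkeeping so that the stated $K^{4\mu}$ bound comes out — the exponent $4\mu$ rather than something larger is what pins down the block length $L$ and the number of recursion levels.

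In more detail, the steps I would carry out are: (1) Normalize and set up the subblock decomposition, choosing $L=\lceil K^{1-\mu/3}\rceil$ (adjusting so $L\mid K$ up to a negligible boundary subblock handled directly), and record the algebraic identity for $c(mL+\ell)^3$. (2) Truncate $\exp(2\pi i c\ell^3)$ — and the higher cross terms $\exp(2\pi i\cdot 3cmL\ell^2)$ insofar as they are not already quadratic — to a polynomial in $\ell$ of degree $O(\nu)$, bounding the tail by $\epsilon/\nu^{O(1)}$ using $cL^3=O(1)$ and the factorial decay, exactly as the paragraph preceding~(\ref{eq:prelimbook}) does in the quadratic setting. (3) Express the $m$-th subblock as an $O(\nu)$-term linear combination, with $O(1)$-bounded coefficients computable in $\nu^{O(1)}$ steps, of values $F(L,i; a_m, b_m)$ for $i\le j+O(\nu)$, where $a_m,b_m$ are the reduced linear/quadratic coefficients described above. (4) Precompute, via the FFT, the table of $F(L,i;p/L,q/L^2)$ for $0\le p<L$, $0\le q<L^2$, $0\le i\le j+O(\nu)$ — at cost $\nu^{O(1)} L^3 = \nu^{O(1)} K^{3-\mu}$; if $3-\mu>4\mu$, i.e. $\mu<3/5$, this already exceeds budget, so in fact one must iterate the splitting: replace the inner quadratic-sum evaluation at length $L$ by another round of sub-splitting, reducing the table size to $K^{4\mu}$ after the optimal number of levels, with Theorem~\ref{quadraticthm} only invoked at the bottom where the length is $K^{o(1)}$. (5) Use Theorem~\ref{quadraticthm} / the Taylor expansion~(\ref{eq:tay1}) to read off each $F(L,i;a_m,b_m)$ from the table in $\nu^{O(1)}$ steps. (6) Recognize the outer sum over $m$ (length $K^{\mu/3}$) of the subblock contributions, weighted by $\exp(2\pi i(amL+bm^2L^2+cm^3L^3))$ and by the $k^j=(mL+\ell)^j$ binomial expansion, as a cubic sum of the form $H(K^{\mu/3}, \cdot; \cdot,\cdot, cL^3)$ with a smaller effective $\mu$; handle it by the recursion, with base case a direct evaluation. (7) Collect error terms (truncation at each level, the $\pm\nu^{\kappa_4}\epsilon$ from each quadratic-sum call, rounding of the $O(1)$-bounded coefficients) and verify they sum to $\pm A_8\nu^{\kappa_6}\epsilon$; collect operation counts and storage, verifying the $\nu^{\kappa_7}$ online cost, the $\nu^{\kappa_8}K^{4\mu}$ precomputation, and the $\nu^{\kappa_9}K^{4\mu}$ storage, and that all intermediate numbers have $O(\nu^2)$ bits.

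The main obstacle I expect is step (4)/(6): arranging the recursion so that the precomputed lattice has size $K^{4\mu}$ rather than $K^{3-\mu}$ (the naive one-level value), which requires iterating the subblock splitting the right number of times and, at each level, verifying that the outer sum really does inherit the same structural form with a strictly smaller cubic-coefficient exponent, that the linear and quadratic coefficients $a_m, b_m$ live on a lattice commensurate with the next level's parameters, and that the number of levels is bounded independent of $\mu$ (using $\mu\le 1$). The analogous one-level analysis in the quadratic case of~\cite{HI} does not need this recursion — here it is forced by the extra cubic term — and getting the exponent bookkeeping to close at exactly $4\mu$ is the delicate part. A secondary difficulty is bounding bit-sizes and accumulated rounding error uniformly across the $O(1/\mu)$ — hence $O(1)$, but with $\mu$-dependent constants one must be careful are actually absolute — recursion levels, so that the final bit-size bound is genuinely $O(\nu^2)$ with absolute implied constant.
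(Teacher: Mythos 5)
Your approach — a direct subblock decomposition $k=mL+\ell$ followed by Taylor-expanding the small residual cubic piece $c\ell^3$ and recursing on the outer $m$-sum — is genuinely different from what the paper does, and it has a gap that I do not see how to close. The problem sits in step (6). After you evaluate (or read from a table) each inner quadratic sum $F(L,i;a_m,b_m)$ with $a_m\equiv a+2bmL+3cm^2L^2$ and $b_m\equiv b+3cmL$, the outer object you are left with is
\begin{equation}
\sum_{m}e^{2\pi i(amL+bm^2L^2+cm^3L^3)}\,\Phi(m),\qquad \Phi(m):=\sum_i w_i\,F(L,i;a_m,b_m)\,,
\end{equation}
and the weight $\Phi(m)$ is \emph{not} constant and is \emph{not} a polynomial or short trigonometric polynomial in $m$: it is a genuinely oscillatory function of $m$ through $a_m$ and $b_m$, with no obvious structure that a cubic exponential sum in $m$ would have. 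So the outer sum is not of the form $H(K^{\mu/3},\cdot;\cdot,\cdot,cL^3)$, and the recursion you propose never gets off the ground. Nor can "one more FFT" sum these $\Phi(m)$'s against the cubic phase: even after you Taylor-expand each $F(L,i;a_m,b_m)$ about its nearest lattice point $(p_m/L,q_m/L^2)$, the indices $p_m,q_m$ jump nontrivially with $m$ (since $b_m$ moves by $3cL\asymp 1/L^2$ per step, i.e.\ $q_m$ changes by $O(1)$ at every $m$, and $a_m$ by far more), so the sum over $m$ does not collapse to a discrete Fourier transform or to anything cheaper than $\approx K^{\mu/3}$ terms. A block decomposition reorganizes the $K$ terms but never shortens the sum; the online cost therefore remains polynomial in $K$, not in $\nu$ as the theorem requires.

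The paper avoids this by an entirely different mechanism. The cubic sum is first written as a one-dimensional integral coupling a quadratic sum $F(K;a_0-x,b_0)$ to a pure cubic sum $Q(K,j;x,c_0)$ (see \eqref{eq:stphase} and \eqref{eq:stphasehkj}); the first phase halves the length of $Q$ repeatedly via an even/odd split while the cubic coefficient grows, and the second phase then applies the van der Corput iteration (Poisson summation) from \cite{HI} to $F$, which \emph{genuinely reduces} the quadratic-sum length to roughly $2b_0 K\le K/2$ at each step while producing an analytically computable remainder $R(\cdot)$ (lemmas~\ref{lem:la1}--\ref{lem:la7} handle these remainders and the resulting exponential integrals). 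Iterating until the condition C1 fails leaves an exponential sum of length $\lesssim K^{\mu}$, and only \emph{then} does the FFT enter, over a grid whose size is squeezed to $K^{4\mu}$ by exploiting that the higher coefficients $\beta_4,\ldots,\beta_S$ of the resulting sum all lie in a low-dimensional family parametrized by $(\tau,\rho,\gamma)$ as in \eqref{eq:relations222}--\eqref{eq:fincond2}. Without a length-reducing transformation of this Poisson-summation type, or an alternative device of the same strength, the poly-log online cost cannot be achieved, and this is precisely what is missing in your proposal.
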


\noindent
Theorem~\ref{cubicthm} yields a $t^{4/13+o_{\lambda}(1)}$ method to compute zeta, as we explain later in this section. (We remark that the restriction $\mu \le 1$ in the statement of the Theorem is first needed during the first phase of the algorithm, in \textsection{4.1}, to ensure that a certain cubic sum can be evaluated accurately using the Euler-Maclaurin summation formula with only a few correction terms.) 

Let us show precisely  how theorems~\ref{quadraticthm} and~\ref{cubicthm} lead to faster method to compute the main sum in the Riemann-Siegel formula, hence $\zeta(1/2+it)$ itself. To this end, assume $t$ is large (say $t>10^{6}$), fix parameters $\lambda>0$ and $0<\beta<1/4$ say, and suppose our goal is to enable the computation of $\zeta(1/2+it)$ to within $\pm\,t^{-\lambda}$ using $t^{1/2-\beta+o_{\lambda}(1)}$ operations on numbers of $O_{\lambda}((\log t)^{\kappa_0})$ bits for some absolute constant $\kappa_0$, where we allow for a one-time precomputation costing $t^{1/2-\beta+o_{\lambda}(1)}$ operations (e.g. in order to obtain the $t^{1/3+o_{\lambda}(1)}$ and $t^{4/13+o_{\lambda}(1)}$ complexities, $\beta$ will have to be specialized to $1/6$ and $5/26$, respectively, while to obtain the $t^{2/5+o_{\lambda}(1)}$ and $t^{3/8+o_{\lambda}(1)}$ complexities, one specializes $\beta$ to $1/10$ and $1/8$). 

We digress briefly to remark that as $\beta$ increases, the target complexity $t^{1/2-\beta+o_{\lambda}(1)}$ improves, but also, the arguments of the resulting exponential sums will be allowed to assume values in larger and larger intervals, and sometimes the degree of these sums will increase (see tables~\ref{zetamethod1} and \ref{zetamethod2} for examples). In fact, the latter observations are the main difficulties in deriving yet faster zeta methods. Because, as explained in \textsection{3}, computing exponential sums via our approach becomes harder as the intervals where the arguments are allowed to assume values expand, or as the degree of the sum increases. 

Let us subdivide the Riemann-Siegel main sum (\ref{eq:rsform1}) into $O(\log t)$ subsums of the form 
\begin{equation} \label{eq:intermed}
\sum_{n=n_g}^{2n_g-1} n^{-1/2}\exp(it \log n)\,,
\end{equation}

\noindent
where $t^{1/2-\beta}\le n_g\le 0.5 \sqrt{t/(2\pi)}$, plus a remainder series of length $O(t^{1/2-\beta})$. The remainder series is not problematic since it can be evaluated directly in $t^{1/2-\beta+o_{\lambda}(1)}$ operations on numbers of $O_{\lambda}(\log t)$ bits, which falls within our target complexity. 

By construction, $n_g$ assumes only $O(\log t )$ distinct values. For each $n_g$, we choose a block size $K_{n_g}:=K_{\beta,n_g}$ according to $n_g t^{\beta-1/2}< K_{n_g}\le n_g t^{\beta-1/2}+1$, so $K_{n_g}\approx n_gt^{\beta-1/2}$. Then we rewrite the sum (\ref{eq:intermed}) as

\begin{equation} \label{eq:intermed1}
\sum_{v\in \mathcal{V}_{n_g}}   \sum_{k=0}^{K_{n_g}} \frac{\exp(i t \log (v+k))}{\sqrt{v+k}} +\mathcal{R}_{t,n_g,K_{n_g}}\,, 
\end{equation}

\noindent
where $\mathcal{V}_{n_g}:=\mathcal{V}_{n_g,K_{n_g}}$ is any set of $\lfloor n_g/K_{n_g} \rfloor$ equally spaced points in the interval $[n_g,2n_g)$, and $\mathcal{R}_{t,n_g}$ is a Dirichlet series of length at most $2K=O(t^{\beta})$ terms. Since $t^{\beta}\le t^{1/2-\beta}$ for $\beta\in[0,1/4]$, then $\mathcal{R}_{t,n_g}$ can be evaluated directly in $t^{1/2 -\beta+o_{\lambda}(1)}$ operations on numbers of $O_{\lambda}(\log t)$ bits, which is falls within our target complexity. So we may focus our attention on the double sum in (\ref{eq:intermed1}). 

Like before, one notes $\exp(it\log(v+k))=\exp(it\log v)\exp(it\log (1+k/v))$. Also, $\sqrt{v+k}=\sqrt{v}\sqrt{1+k/v}$. So by a routine application of Taylor expansions to $\log(1+k/v)$ and $1/\sqrt{1+k/v}$, each inner sum in (\ref{eq:intermed1}) is expressed in the form

\begin{equation} \label{eq:intermed3}
\exp(it\log v)\,\sum_{l=0}^{\infty}\frac{(-1)^l\, \prod_{r=1}^{l} (2r-1) }{l!\, 2^{l} v^{l+1/2}} \sum_{k=0}^{K_{n_g}} k^l \exp \left(it \sum_{m=1}^{\infty} \frac{(-1)^{m+1} k^m}{m\, v^m} \right)\,.
\end{equation}

\noindent
Since $k/v \le K_{n_g}/n_g \le 2t^{\beta-1/2}$, the Taylor series over $l$ and $m$ in (\ref{eq:intermed3}) converge quite fast. Truncating them at $L$ and $U$ respectively yields

\begin{equation} \label{eq:intermed4}
\exp(it\log v)\,\sum_{l=0}^{L}\frac{(-1)^l\, \prod_{r=1}^{l} (2r-1)}{l!\, 2^{l} v^{l+1/2}}\sum_{k=0}^{K_{n_g}} k^l \exp \left(it \sum_{m=1}^U \frac{(-1)^{m+1} k^m}{m\, v^m}+\epsilon_{k,U}\right) +\delta_{L}\,, 
\end{equation}

\noindent
where
\begin{equation}
\begin{split}
&\epsilon_{k,U}=O\left( t K_{n_g}^U/n_g^U\right)=O(t^{1-(1/2-\beta)U})\,,  \\
&\delta_{L}= O\left(K_{n_g}^{L+1}/n_g^L\right)=O(t^{\beta-(1/2-\beta)L})\,. 
\end{split}
\end{equation}

\noindent
We choose $L=U:=U_{\beta,\lambda}=\lceil (\lambda+10)/(1/2-\beta) \rceil$ say. So $L=U=O_{\beta,\lambda}(1)$.  By straightforward calculations, the total error (from the $\epsilon_{k,U}$'s and $\delta_L$) is of size $O_{\lambda}(t^{-\lambda-1})$. Therefore, each of the inner sums in (\ref{eq:intermed1}) is equal to

\begin{equation} \label{eq:intermed6}
\sum_{l=0}^L \frac{w_{l,t,v,K_{n_g}}}{K_{n_g}^l}\sum_{k=0}^{K_{n_g}} k^l \exp \left(it\sum_{m=0}^U \frac{(-1)^{m+1} k^m}{m\, v^m}\right)+O(t^{-\lambda-1})\,,
\end{equation}

\noindent
where
\begin{equation}
w_{l,t,v,K_{n_g}}:=\frac{(-1)^l\, \prod_{r=1}^{l} (2r-1)\,K_{n_g}^l}{l!\, 2^{l} v^{l+1/2}}\,\exp(it\log v)\,.
\end{equation}

\noindent
It is easy to see that each $w_{l,t,v,K_{n_g}}$ can be computed to within $\pm\, t^{-\lambda-1}$ using $t^{o_{\lambda}(1)}$ operations on numbers of $O_{\lambda}(\log t)$ bits. Also, by our choices of $v$ and $K_{n_g}$, $w_{l,v,K_{n_g}}=O(1)$ (in fact $|w_{l,v,K_{n_g}}|\le 2^l t^{-l(1/2-\beta)}$, but that is not important). Now let 
\begin{equation}
d:=d_{\beta}=\lceil 1/(1/2-\beta)\rceil-1\,.
\end{equation}

\noindent
By truncating the series over $m$ in (\ref{eq:intermed6}) at $d$, each inner sum there is equal to:

\begin{equation} \label{eq:intermed7}
\sum_{l=0}^L \frac{w_{l,t,v,K_{n_g}}}{K_{n_g}^l}\sum_{k=0}^{K_{n_g}} k^l \exp\left(2\pi i f_{\beta,t,v}(k)\right) \left[ \sum_{h=0}^{\infty} \frac{(-1)^{hd} i^h}{h!} \left(\frac{t k^{d+1}}{v^{d+1}}\right)^h\,(\alpha_{v,U,d,k})^h  \right]\,,
\end{equation}

\noindent
where 
\begin{equation}
f_{\beta,t,v}(x):=\sum_{m=0}^d a_{t,v,m} x^m\,, \qquad a_{t,v,m}:=\frac{(-1)^{m+1} t}{2\pi m\, v^m}\,.
\end{equation}

\begin{equation}
\alpha_{v,U,d,k}=\sum_{m=0}^{U-d-1} \frac{(-1)^{m} k^{m}}{(m+d+1)\, v^{m}}\,,
\end{equation}

\noindent
Notice that in the notation of tables~\ref{zetamethod1} and~\ref{zetamethod2}, we have $a_{t,v,1}=a_{t,v} \bmod{1}$, $a_{t,v,2}=b_{t,v}\bmod{1}$, $a_{t,v,3}=c_{t,v}\bmod{1}$,  and so on. 

By  routine calculations,  $a_{t,v,m}\in [-t/(2\pi m n_g^m),t/(2\pi m n_g^m)]$, $|\alpha_{v,U,d,k}|<2$, and, by our choice of $d$, $t (k/v)^{d+1}\le 2^{d+1}$. So if the Taylor series over $h$ in (\ref{eq:intermed7}) is truncated after $J'':=J''_{t,\lambda}=\lceil (\lambda+100) \log t \rceil$ terms, the resulting error is of size $O(t^{-\lambda-1})$. Each inner sum in (\ref{eq:intermed1}) is thus equal to

\begin{equation} \label{eq:intermed99}
\sum_{j=0}^{J'''} \frac{z_{l,t,v,K_{n_g},U}}{K_{n_g}^j}\sum_{k=0}^{K_{n_g}} k^j \exp \left(2\pi i f_{\beta,t,v}(k)\right)+O(t^{-\lambda-1})
\end{equation}

\noindent
where $J''':=L+(d+1)J''+1=O_{\lambda}(\log t)$, and $f_{\beta,t,v}(x)$ is a real polynomial of degree $d=\lceil 1/(1/2-\beta)\rceil -1$. The coefficients $z_{l,t,v,K_{n_g},U}$ in (\ref{eq:intermed99}) are of size $O(1)$ each, and are computable to within $\pm\, t^{-\lambda -1}$ using $t^{o_{\lambda}(1)}$ operations on numbers of $O_{\lambda}(\log t)$ bits. Last, suppose each of the sums: (this is a critical assumption, and proving it is the main difficulty in deriving faster zeta methods)

\begin{equation} \label{eq:intermed9}
 \frac{1}{K_{n_g}^j}\sum_{k=0}^{K_{n_g}} k^j \exp \left(2\pi i f_{\beta,t,v}(k)\right)\,,\qquad j=0,1,\ldots,J'''\,,
\end{equation}

\noindent
can be evaluated to within $\pm\, t^{-\lambda-1}$ in $t^{o_{\lambda}(1)}$ operations on numbers of $O_{\lambda}((\log t)^{\kappa_0})$ bits, for some absolute constant $\kappa_0$, where we allow for a one-time precomputation costing $t^{1/2-\beta+o_{\lambda}(1)}$ operations. Then $\zeta(1/2+it)$ can be computed to within $\pm\, t^{-\lambda}$ using $t^{1/2-\beta+o_{\lambda}(1)}$ operations on numbers of $O((\log t)^{\kappa_0})$ bits. 

Using this setup, Theorem~\ref{quadraticthm}, which handles quadratic exponential sums, yields a $t^{1/3+o_{\lambda}(1)}$ method to compute zeta. First, we let $\beta=1/6$, which implies the polynomial $f_{\beta,t,v}(x)$ has degree $d=\lceil 1/(1/2-1/6)\rceil -1 =2$, and the sums (\ref{eq:intermed9}) are quadratic exponential sums $F(K_{n_g}, j; a_{t,v}, b_{t,v})$.  So applying the algorithm of Theorem~\ref{quadraticthm} with $j\le \lceil (10\lambda+100) \log t\rceil$, $\epsilon=t^{-\lambda-10}$, and $K=K_{n_g}$, permits the evaluation of each quadratic sum to within $\pm\,t^{-\lambda-1}$ using $t^{o_{\lambda}(1)}$ operations on numbers of $O_{\lambda}(\nu(K,j,\epsilon)^2)$ bits. In addition, given our choices of $K_{n_g}$, $j$, and $\epsilon$, we have $\nu(K_{n_g},j,\epsilon)=O_{\lambda}((\log t)^2)$. Together, this yields a $t^{1/3+o_{\lambda}(1)}$ method to compute zeta:

\begin{thm} \label{thm:zeta0}
Given any constant $\lambda$, there are effectively computable constants $A_{13}:=A_{13}(\lambda)$ and $A_{14}:=A_{14}(\lambda)$, and an absolute constant $\kappa_{10}$, such that for any $t>1$, the  value of the function $\zeta(1/2+it)$ can be computed to within $\pm\, t^{-\lambda}$ using $\le A_{13}\,(\log t)^{\kappa_{10}}\,t^{1/3}$ operations on numbers of $\le A_{14}\, (\log t)^4$ bits.
\end{thm}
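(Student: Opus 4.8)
The plan is to specialize the general reduction already set up in this section to $\beta=1/6$ and then feed the resulting quadratic sums into Theorem~\ref{quadraticthm}, tracking errors, operation counts, and bit sizes. With $\beta=1/6$ one has $d=\lceil 1/(1/2-1/6)\rceil-1=2$, so each $f_{t,v,d}(x)=a_{t,v}x+b_{t,v}x^2$ is quadratic and the sums~(\ref{eq:intermed9}) coincide with the functions $F(K_{n_g},j;a_{t,v},b_{t,v})$ of Theorem~\ref{quadraticthm}. First I would start from~(\ref{eq:rsform}): the phase $\theta(t)$ and remainder $\Phi_\lambda(t)$ are computed to within $\pm t^{-\lambda}$ in $t^{o_\lambda(1)}$ operations on $O_\lambda(\log t)$-bit numbers by Odlyzko--Sch\"onhage, so it suffices to compute the main sum~(\ref{eq:rsform1}) to within $\pm t^{-\lambda}$. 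Dyadically splitting~(\ref{eq:rsform1}) yields $O(\log t)$ subsums of the form~(\ref{eq:intermed}) with $t^{1/3}\le n_g\le 0.5\sqrt{t/(2\pi)}$, plus a leftover Dirichlet series of length $O(t^{1/3})$ that is summed termwise in $t^{1/3+o_\lambda(1)}$ operations.

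Next, for each of the $O(\log t)$ values of $n_g$ I would run the block decomposition~(\ref{eq:intermed1})--(\ref{eq:intermed99}) unchanged. Here $K_{n_g}\approx n_g t^{-1/3}$, the set $\mathcal{V}_{n_g}$ has $\lfloor n_g/K_{n_g}\rfloor=O(t^{1/3})$ points, the short remainder $\mathcal{R}_{t,n_g,K_{n_g}}$ has $O(t^{1/6})$ terms and is summed directly, and each inner sum over $k$ equals the linear combination~(\ref{eq:intermed99}) of quadratic sums $F(K_{n_g},j;a_{t,v},b_{t,v})$ for $0\le j\le J'''=O_\lambda(\log t)$, plus an error $O(t^{-\lambda-1})$, where the $O(1)$ coefficients are computable to within $\pm t^{-\lambda-1}$ in $t^{o_\lambda(1)}$ operations on $O_\lambda(\log t)$-bit numbers. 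Then I would invoke Theorem~\ref{quadraticthm} with $K=K_{n_g}$, $j\le J'''\le\lceil(10\lambda+100)\log t\rceil$, and $\epsilon=t^{-\lambda-10}$: since $\nu(K_{n_g},j,\epsilon)=(j+1)\log(K_{n_g}/\epsilon)=O_\lambda((\log t)^2)$, each quadratic sum is obtained to within $\pm A_5\nu^{\kappa_4}\epsilon\le t^{-\lambda-1}$ for $t$ large using $A_6\nu^{\kappa_5}=O_\lambda((\log t)^{2\kappa_5})$ operations on numbers of $A_7\nu^2=O_\lambda((\log t)^4)$ bits. Crucially Theorem~\ref{quadraticthm} requires no precomputation, which is why none appears in the statement of Theorem~\ref{thm:zeta0}.

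Multiplying the counts then gives the bound: $O(\log t)$ choices of $n_g$, times $O(t^{1/3})$ blocks per $n_g$, times $O_\lambda(\log t)$ indices $j$ per block, times $O_\lambda((\log t)^{2\kappa_5})$ operations per quadratic sum, is $O_\lambda((\log t)^{\kappa_{10}}t^{1/3})$ total operations for an absolute $\kappa_{10}$ (roughly $2\kappa_5+2$); this dominates the $t^{1/3+o_\lambda(1)}$ cost of the leftover series, the $O(t^{1/6}\log t)$ cost of all the $\mathcal{R}$-remainders, and the cost of computing all the coefficients (also polylogarithmic per block after refining the $t^{o_\lambda(1)}$ bounds). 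Every number handled has $O_\lambda((\log t)^4)$ bits, giving the stated bit bound. For the error: each inner sum carries an $O(t^{-\lambda-1})$ truncation error plus $O_\lambda(\log t)$ contributions of size $O(t^{-\lambda-1})$ from the quadratic-sum evaluations; summing over all $O(t^{1/3}\log t)$ blocks and $O(\log t)$ dyadic pieces gives a total error $O_\lambda(t^{1/3}(\log t)^2 t^{-\lambda-1})$, which is $\le t^{-\lambda}$ once one runs the whole argument with $\lambda$ replaced by $\lambda+1$. The only step needing genuine care is this accounting --- checking that the fixed truncation lengths ($L=U$, $J''$, hence $J'''$) and the choice $\epsilon=t^{-\lambda-10}$ keep every accumulated error below $t^{-\lambda}$ while keeping $\nu=O_\lambda((\log t)^2)$ --- but this follows routinely from the recorded estimates $a_{t,v,m}=O(t/(m\,n_g^m))$, $|\alpha_{v,U,d,k}|<2$, and $t(k/v)^{d+1}\le 2^{d+1}$.
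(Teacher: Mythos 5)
Your proposal is correct and takes essentially the same route as the paper: you specialize $\beta=1/6$, run the reduction through~(\ref{eq:intermed})--(\ref{eq:intermed99}) to produce quadratic sums $F(K_{n_g},j;a_{t,v},b_{t,v})$, and invoke Theorem~\ref{quadraticthm} with $\epsilon=t^{-\lambda-10}$ and $j\le\lceil(10\lambda+100)\log t\rceil$ so that $\nu=O_\lambda((\log t)^2)$, giving $O_\lambda((\log t)^{2\kappa_5})$ operations per sum on $O_\lambda((\log t)^4)$-bit numbers. Your explicit operation count and error accounting (including the observation that Theorem~\ref{quadraticthm} needs no precomputation, so none appears in Theorem~\ref{thm:zeta0}) merely spell out the ``$t^{o_\lambda(1)}$'' factors the paper leaves implicit, and all the estimates match.
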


Quite similarly, Theorem~\ref{cubicthm}, which handles cubic sums, yields the $t^{4/13+o_{\lambda}(1)}$ asymptotic complexity. First, we let $\beta=5/26$, which implies the degree $d$ of $f_{\beta,t,v}(x)$ in (\ref{eq:intermed9}) is $\lceil 1/(1/2-5/26)\rceil -1 =3$. So the sums (\ref{eq:intermed9}) are now cubic exponential sums $H(K_{n_g},j;a_{t,v},b_{t,v},c_{t,v})$. Also, by construction, we have $K_{n_g}\le n_g t^{-4/13}+1$ and $|c_{t,v}|\le t/(6\pi n_g^3)$, and so $|c_{t,v}| \le  t^{1/13} K_{n_g}^{-3}$. In particular, the range where the cubic coefficient $c_{t,v}$ can assume values is restricted by the sizes of $K_{n_g}$ and $t$. With this in mind, we distinguish the following two cases: $K_{n_g} \le t^{1/13}$, and $K_{n_g} > t^{1/13}$. 

If $K_{n_g} \le t^{1/13}$, we use the FFT to precompute $H(K_{n_g},j;a,b,c)$, for each $0\le j\le J'''$, and at all points $(a,b,c)$ of the lattice

\begin{equation}
\{(p/K_{n_g},q/K_{n_g}^2,r/K_{n_g}^3)\, | \, 0\le p <K_{n_g}, 0\le q <K_{n_g}^2, 0\le r \le t^{1/13}\}\,.
\end{equation}

\noindent
Once the precomputation is carried out, then by a similar arguments to Sch\"onhage's method, Taylor expansions can be used to evaluate $H(K_{n_g},j;a_{t,v},b_{t,v},c_{t,v})$ for any $v\in \mathcal{V}_{n_g}$  to within $\pm\, t^{-\lambda-1}$ in $t^{o_{\lambda}(1)}$ operations on numbers of $O_{\lambda}(\log t)$ bits using the precomputed data. It remains to calculate the cost of the precomputation. Since $|c_{t,v}| \le  t^{1/13} K_{n_g}^{-3}$, then for each $K_{n_g}$, the cost of the FFT precomputation is $K_{n_g}^3 t^{1/13+o_{\lambda}(1)}= t^{4/13+o_{\lambda}(1)}$ operations on numbers of $O_{\lambda}(\log t)$ bits, and requiring $t^{4/13+o_{\lambda}(1)}$ bits of storage. As there are only $O(\log t)$ different values of $K_{n_g}$ that arise (one for each $n_g$), then the total cost of the precomputation is still $t^{4/13+o_{\lambda}(1)}$ operations.

On the other hand, if $K_{n_g}>t^{1/13}$, then the previous FFT computation becomes too costly. So we invoke the algorithm of Theorem~\ref{cubicthm} for cubic sums. We first observe $|c_{t,v}|\le t^{1/13}K_{n_g}^{-3}=K_{n_g}^{\tilde{\mu}-3}$, where $\tilde{\mu}:=\tilde{\mu}_{t,K_{n_g}}=(\log t^{1/13})/(\log K_{n_g})$ (notice since $t^{1/13}<K_{n_g}<t^{5/26}$, then $2/5\le\tilde{\mu}\le 1$, but this is not important to what follows). Applying Theorem~\ref{cubicthm} with $\mu=\tilde{\mu}$,  we merely need to perform an FFT precomputation costing $K_{n_g}^{4\tilde{\mu}+o_{\lambda}(1)}=t^{4/13+o_{\lambda}(1)}$ operations and requiring $K_{n_g}^{4\tilde{\mu}+o_{\lambda}(1)}=t^{4/13+o_{\lambda}(1)}$ bits of storage, after which $H(K_{n_g},j;a_{t,v},b_{t,v},c_{t,v})$ can be computed for any $v\in \mathcal{V}_{n_g}$ to within $\pm\, t^{-\lambda-10}$ say using $t^{o_{\lambda}(1)}$ operations on numbers of $O_{\lambda}(\nu(K,j,\epsilon)^2)$ bits, where $\epsilon= t^{-\lambda-10}$, and $j\le J'''$. Last, since $\beta=5/26$ we have the finer estimate $J'''=O_{\lambda}(1)$, and also $\log K_{n_g}=O(\log t)$, which together imply $\nu(K_{n_g},j,\epsilon)=O_{\lambda}(\log t)$. Collecting the various pieces together yields Theorem~\ref{thm:zeta1} immediately.

We make several comments. First, in order to improve on the complexity exponent $4/13$, one must lower the precomputation cost in Theorem~\ref{cubicthm} (equivalently, one needs a better handle on sums with larger cubic coefficients). In this regard, it appears that if certain quite substantial modifications to our cubic sums algorithm are carried out, then the $t^{4/13+o_{\lambda}(1)}$ method is capable of improvement to $t^{3/10+o_{\lambda}(1)}$ complexity (see \textsection{3} for further comments on this). 

Second, both of the $t^{1/3+o_{\lambda}(1)}$ and $t^{4/13+o_{\lambda}(1)}$ methods appear to be compatible with the amortized complexity techniques of Odlyzko-Sch\"{o}nhage~\cite{OS}. In the case of the $t^{1/3+o_{\lambda}(1)}$ method, for instance, this means that it can be modified to permit the computation of about $T^{4/13}$ values of $\zeta(1/2+i(T+t))$ in the interval $t\in [0,T^{4/13}]$ using $T^{4/13+o_{\lambda}(1)}$ operations.  

Last, in the specific case of the $t^{1/3+o_{\lambda}(1)}$ method, the use of the Riemann-Siegel formula is actually not important. One can  still achieve the $t^{1/3+o_{\lambda}(1)}$ complexity using Theorem~\ref{quadraticthm} even if one starts with the main sum in the Euler-Maclaurin formula, which involves $\approx t$ terms. This flexibility is particularly useful if one is interested in very accurate evaluations of $\zeta(s)$ in regions where explicit asymptotics for the error term in the Riemann-Siegel formula have not been worked out.

\section{The idea of the algorithm to compute cubic exponential sums with a small cubic coefficient}

We first recall that by a direct application of Poisson summation we have 

\begin{equation}\label{eq:poisson}
\sum_{k=0}^K e^{2\pi i f(k)} = \frac{1+e^{2\pi i f(K)}}{2}+ PV \sum_{m=-\infty}^{\infty} \int_0^K e^{2\pi i f(x)-2\pi i m x}\,dx\,,
\end{equation}

\noindent
where {\em{PV}} means the terms of the infinite sum are taken in conjugate pairs. Theorems \ref{quadraticthm} and \ref{cubicthm}, which handle quadratic and cubic exponential sums respectively, were inspired by the following application of formula (\ref{eq:poisson}) due to van der Corput (see~\cite{Ti}, page 75, for a slightly different version; also see the discussion following Theorem~1.2 in~\cite{Hi}):

\begin{thm}[van der Corput iteration]
Let $f(x)$ be a real function with a continuous and strictly increasing derivative in $s\le x\le t$. For each integer \mbox{$f'(s)\le m\le f'(t)$}, let $x_m$ be the (unique) solution of $f'(x)=m$ in $[s,t]$. Assume $f(x)$ has continuous derivatives up to the third order, that $\lambda_2 <|f''(x)|<A\lambda_2$ and $|f'''(x)|<A\lambda_3$ for all $s\le x \le t$, where $A$ is an absolute constant. Then

\begin{equation}\label{eq:vanbook}
\sum_{s\le k \le t} e^{2\pi i f(k)}=e^{\pi i /4} \sum_{f'(s) \le m \le f'(t)}\frac{e^{2\pi i (f(x_m)-mx_m)}}{\sqrt{|f''(x_m)|}}+\mathcal{R}_{s,t,f} 
\end{equation}

\noindent
where $\mathcal{R}_{s,t,f} =  O\left(\lambda_2^{-1/2}+\log(2+(t-s)\lambda_2)+(t-s)\lambda_2^{1/5}\lambda_3^{1/5}\right)$.
\end{thm}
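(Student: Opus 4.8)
The plan is to run the classical van der Corput $B$-process in three stages: (i) convert the exponential sum into a sum of exponential integrals by a truncated Poisson summation formula; (ii) evaluate each integral by the method of stationary phase; (iii) assemble the error terms. Throughout write $e(y):=e^{2\pi i y}$ and $g(x):=e(f(x))$.

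First I would apply a truncated Poisson summation formula (in the style of Titchmarsh~\cite{TI}, Ch.~IV) to $\sum_{s\le k\le t}g(k)$. Since $f'$ is strictly increasing, for an integer $m$ the phase $\phi_m(x):=f(x)-mx$ has $\phi_m'(x)=f'(x)-m$ monotone, vanishing at the unique point $x_m\in[s,t]$ exactly when $f'(s)\le m\le f'(t)$. The truncated formula writes $\sum_{s\le k\le t}g(k)=\sum_m\int_s^t e(\phi_m(x))\,dx$ with $m$ running over integers in a bounded neighbourhood of $[f'(s),f'(t)]$, up to a harmless error; the $m$ with $f'(s)\le m\le f'(t)$ are the \emph{stationary} terms and carry the critical points $x_m$, while the others in the truncation range are the \emph{non-stationary} terms.

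Next, for each stationary $m$ I would estimate $\int_s^t e(\phi_m(x))\,dx$ by stationary phase. Note $\phi_m''=f''>\lambda_2>0$ on $[s,t]$, because $f'$ is increasing and $|f''|>\lambda_2$, which also fixes the sign $e^{\pi i/4}$ (rather than $e^{-\pi i/4}$) in the main term. Split the interval at $x_m-\delta$ and $x_m+\delta$. On the central window, Taylor-expand $\phi_m(x)=\phi_m(x_m)+\tfrac12 f''(x_m)(x-x_m)^2+O(\lambda_3|x-x_m|^3)$, replace $e(\phi_m)$ by the Gaussian $e(\phi_m(x_m))\,e(\tfrac12 f''(x_m)(x-x_m)^2)$ at a cost $O(\lambda_3\delta^4)$, and complete the Gaussian integral to $\mathbb{R}$ at a cost $O((\lambda_2\delta)^{-1})$ by van der Corput's first-derivative test, producing the main term $e^{\pi i/4}e(f(x_m)-mx_m)/\sqrt{|f''(x_m)|}$. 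On the two outer pieces $|\phi_m'|\ge\lambda_2\delta$ and is monotone, so each contributes $O((\lambda_2\delta)^{-1})$. Hence each stationary term equals its main term with error $O((\lambda_2\delta)^{-1}+\lambda_3\delta^4)$; choosing $\delta\asymp(\lambda_2\lambda_3)^{-1/5}$ balances these to $O(\lambda_2^{-4/5}\lambda_3^{1/5})$, and summing over the $O(1+(t-s)\lambda_2)$ stationary values of $m$ gives the $O((t-s)\lambda_2^{1/5}\lambda_3^{1/5})$ contribution to $\mathcal R_{s,t,f}$.

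For a non-stationary $m$, let $d_m$ be the distance from $m$ to $[f'(s),f'(t)]$; then $\phi_m'$ is monotone with $|\phi_m'|\ge d_m$, and integration by parts together with $\int_s^t|\phi_m''|/|\phi_m'|^2\,dx\ll 1/d_m$ (substitute $u=\phi_m'(x)$) gives $\bigl|\int_s^t e(\phi_m)\,dx\bigr|\ll 1/d_m$. Summing over the non-stationary $m$ in the truncation range (of length $\asymp 1+(t-s)\lambda_2$) yields $\sum_{1\le d\lesssim(t-s)\lambda_2}1/d\ll\log(2+(t-s)\lambda_2)$. Finally, the $O(\lambda_2^{-1/2})$ term absorbs the remaining boundary effects — the endpoint terms of the summation formula, and the stationary terms whose critical point lies too close to $s$ or $t$ for the symmetric split above — all controlled by van der Corput's second-derivative bound $\bigl|\int_a^b e(\psi)\bigr|\ll(\min|\psi''|)^{-1/2}$. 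Combining (i)--(iii) gives the claimed $\mathcal R_{s,t,f}$.

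The individual estimates are the standard first- and second-derivative tests of van der Corput, so the main obstacle is doing step (ii) \emph{uniformly}: tracking the stationary-phase error with explicit dependence on $\lambda_2$, $\lambda_3$ and $t-s$ through the optimal cut-off $\delta\asymp(\lambda_2\lambda_3)^{-1/5}$, treating stationary points that drift near the endpoints, and arranging the Poisson truncation and the sum over $d_m$ so that the three error sources do not overlap and assemble exactly into $O(\lambda_2^{-1/2}+\log(2+(t-s)\lambda_2)+(t-s)\lambda_2^{1/5}\lambda_3^{1/5})$. In short, the work is bookkeeping and optimization rather than any new inequality.
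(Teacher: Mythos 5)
The paper does not prove this theorem: it cites it as a known result (Titchmarsh~\cite{TI}, p.~75, and implicitly Graham--Kolesnik~\cite{GK}), stated here only as the conceptual starting point for the cubic-sum algorithm. So there is no paper proof to compare against; what you have written is a sketch of the classical van der Corput $B$-process, which is exactly the argument those references carry out.

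As such, the outline is sound, but two points should not be waved through as bookkeeping. First, the claim that Poisson summation can be truncated ``up to a harmless error'' hides the real work in the tail. For $m$ far from $[f'(s),f'(t)]$, the first-derivative bound gives only $\ll 1/d_m$, and $\sum 1/d_m$ diverges; making the tail converge requires either a second integration by parts, which improves the bulk to $\ll \lambda_2(t-s)/d_m^2$ but leaves endpoint terms still of size $1/d_m$, or an explicit cancellation argument for the boundary terms $e(f(t)-mt)/(f'(t)-m)$ after summing over $m$ (they form a convergent sawtooth-type Fourier series, which is also where stationary points drifting near $s$ or $t$ must be fed into the second-derivative test to produce the $\lambda_2^{-1/2}$ term). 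This cancellation is the crux of Titchmarsh's Lemma~4.2--4.7 chain and is precisely what your ``truncated Poisson summation formula'' presupposes. Second, your per-$m$ stationary-phase error $\lambda_2^{-4/5}\lambda_3^{1/5}$ does not automatically fit inside $\lambda_2^{-1/2}+\log(2+(t-s)\lambda_2)+(t-s)\lambda_2^{1/5}\lambda_3^{1/5}$; it does if $\lambda_3\le\lambda_2^{3/2}$, and for the complementary range one must argue the statement is trivial from the second-derivative test, but this case split needs to be made. Relatedly, the optimal $\delta\asymp(\lambda_2\lambda_3)^{-1/5}$ must also be checked against the $\asymp 1/\lambda_2$ spacing of consecutive $x_m$ so that the windows do not overlap. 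None of this changes the strategy, but these are the places where a complete write-up actually spends its effort, and your characterization of them as mere ``bookkeeping and optimization'' undersells the tail-cancellation step in particular.
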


In the case of quadratic exponential sums the polynomial $f(x)$ in (\ref{eq:vanbook}) is \mbox{$ax+bx^2$}, where, by the periodicity of $e^{2\pi i x}$, we may assume $a$ and $b$ are in $[0,1)$.  Taking $s=0$ and $t=K$ in (\ref{eq:vanbook}), and assuming $\lceil a\rceil <\lfloor a+2bK\rfloor$, which is frequently the case (this assumption is for a technical reason, and is to ensure $bK$ is bounded away from 0), then on substituting  $x_m=(m-a)/(2b)$ and \mbox{$f(x_m)=ax_m+bx_m^2$} in (\ref{eq:vanbook}), the van der Corput iteration points to a relation like 

\begin{equation}  \label{eq:vanbook1}
\begin{split}
\sum_{k=0}^K & \exp (2\pi i a k +2\pi i b k^2) =\\
&\quad \frac{e^{\pi i /4-\pi i a^2/(2b)}}{\sqrt{2b}}\sum_{m=\lceil a\rceil}^{\lfloor a+2bK\rfloor} \exp\left(2\pi i \frac{a}{2b} m-2\pi i\frac{1}{4b} m^2\right)+\mathcal{R}_1(a,b,K) \,.
\end{split}
\end{equation}

\noindent
As explained in~\cite{Hi}, the sum on the r.h.s. of (\ref{eq:vanbook1}) arises from the integrals in  (\ref{eq:poisson}) that contain a saddle-point, where an integral is said to contain a saddle-point if the exponent $f'(x)-m$ vanishes for some $0\le x\le K$ (intuitively, the integral accumulates mass in a neighborhood of the saddle-point). 

As discussed in~\cite{Hi}, the relation (\ref{eq:vanbook1}) has several interesting features. For example, the new sum there (on the r.h.s.) is still a quadratic sum. This is essentially a consequence of the self-similarity of the Gaussian $e^{-x^2}$. Also, although we started with a sum of length $K+1$ terms, the new sum has $\le 2bK$ terms. In particular, since we can always normalize $b$ so that $b\in [0,1/4]$ (see \cite{Hi}), then we can ensure the new sum has length $\le K/2$ terms, which is at most half the length of the original sum. 

Moreover, the remainder term $\mathcal{R}_1:=\mathcal{R}_1(a,b,K)$, which corresponds to integrals in (\ref{eq:poisson}) with no saddle-point, has a fairly elementary structure, and, as proved in~\cite{Hi}, it can be computed to within $\pm\, \epsilon$ using $O(\log^{\tilde{\kappa}_1}(K/\epsilon))$ operations on numbers of $O(\log^2(K/\epsilon))$ bits, where $\tilde{\kappa}_1$ is some absolute constant. Indeed, the bulk of the effort in computing $\mathcal{R}_1$ is exerted on a particularly simple type of an incomplete Gamma function: 

\begin{equation} \label{eq:igf}
h(z,w):=\int_0^1 t^z \exp(w t)\,dt\,,\qquad 0\le z\,,\,\,\,\, z\in \mathbb{Z}\,,\,\,\,\,\Re(w)\le 0\,.
\end{equation}

\noindent
For purposes of our algorithms, the non-negative integer $z$ in (\ref{eq:igf}) will be of size $O(\log(K/\epsilon)^{\tilde{\kappa}})$, where $\tilde{\kappa}$ is some absolute constant, and we wish to compute the quadratic sum to within $\pm\,\epsilon$ say. So the relevant range of $z$ is rather restricted, which is important as it enables fast evaluations of the integrals (\ref{eq:igf}).  

Given these features, the important observation is that by iterating relation (\ref{eq:vanbook1}) at most $\log_2 K$ times ($\log_2 x$ stands for the logarithm to base 2), the quadratic sum can be computed to within $\pm\, \epsilon \sqrt{K} \log_2 K$ in poly-log time in $K/\epsilon$. It should be emphasized that the main technical hurdle in proving this is in showing the remainder term $\mathcal{R}_1$ can in fact be computed sufficiently accurately and quickly.

It is natural to ask whether this procedure, whereby one attempts to apply the van der Corput iteration repeatedly, should generalize to  cubic and higher degree exponential sums. If it does, then in view of tables~\ref{zetamethod1} and~\ref{zetamethod2} from \textsection{2}, we could obtain faster methods to compute the zeta function.  One difficulty towards such generalizations is the lack of self-similarity in the case of higher degree exponential sums. As we explain next, however, it is possible to overcome this difficulty if the coefficients of  higher degree terms are small enough. 

To this end,  consider the van der Corput iteration in the case of cubic exponential sums. So in (\ref{eq:vanbook}), we take $s=0$, $t=K$, and 

\begin{equation}
f(x)=ax+bx^2+cx^3\,. 
\end{equation}

\noindent
As explained in \cite{Hi}, it follows from the periodicity of $e^{2\pi i x}$, and conjugation if necessary, that we may assume $a \in [0,1)$ and $b \in [0,1/4]$. We may also assume $K>1000$ say, and $c\ne 0$, because if $c=0$ we obtain a quadratic sum. We require the \textit{starting} cubic coefficient $c$ and the \textit{starting} sum length $K$ to satisfy $|cK^2|\le 0.01$ (the constant $0.01$ is not significant but it is convenient to use). This condition essentially corresponds to restricting $\mu\le 1$ in the statement of Theorem~\ref{cubicthm}. 

Our plan is to cut the length of the cubic exponential sums by repeatedly applying the van der Corput iteration until one, or both, of the following two conditions fails (possibly before entering the first iteration): $|cK^2|\le 0.01 b$, or $bK\ge 1$. Notice these conditions necessitate $\lceil a\rceil \le \lfloor a+2bK+3cK^2\rfloor$, among other consequences. The significance of these conditions will become clear a posteriori.

First, we locate the saddle-points by solving the equation $f'(x)=m$ for $f(0)\le m \le f(K)$, or equivalently for $\lceil a\rceil \le m \le \lfloor a+2bK+3cK^2\rfloor$. We obtain

\begin{equation} \label{eq:quadbook}
x_m=\frac{\sqrt{b^2+3c(m-a)}-b}{3c}\,.
\end{equation}

\noindent
On substituting $x_m$ in $f(x)-mx$, one finds

\begin{equation}\label{eq:quadbook0}
f(x_m)-mx_m=\frac{2b^3+9bc(m-a)-2\left(b^2+3c(m-a) \right)^{3/2}}{27c^2}\,.
\end{equation}

The function $f(x_m)-mx_m$ is not a polynomial in $m$ because it involves a square-root term $\sqrt{b^2+3c(m-a)}$ (informally, this is a manifestation of the lack of self-similarity in the cubic case). So the behavior of $f(x_m)-mx_m$ under further applications of the van der Corput iteration is likely to be complicated and hard to control. It therefore appears we cannot apply relation (\ref{eq:vanbook}) repeatedly, like we did in the quadratic case, because we do not obtain sums of the same type with each application. However, if $c$ is sufficiently small, it is reasonable to apply Taylor expansions to the problematic term $\sqrt{b^2+3c(m-a)}$ in order to express it as a rapidly convergent power series in $3c(m-a)/b^2$. And if $c$ is sufficiently small, then the contribution of the quartic and higher terms in this series will be of size $O(1)$, which allows us to expand them away as polynomials of low degree in $m-a$. 

Indeed, by the conditions $|cK^2|\le 0.01b$ and $1\le bK$ that we imposed on $c$ and $b$ earlier, together with the restriction $\lceil a\rceil \le m \le \lfloor a+2bK+3cK^2\rfloor$, we see that $3c (m-a)/b^2\le 9cK/b \le 0.09b<1$. So it is permissible to apply Taylor expansions to $\sqrt{b^2+3c(m-a)}=(1/b)\sqrt{1+3c(m-a)/b^2}$ to obtain 

\begin{equation} \label{eq:vancubbook1}
\begin{split}
f(x_m)-mx_m=&-\frac{(m-a)^{2}}{4 \, b} + \frac{c (m-a)^{3}}{8 \, b^{3}} - \frac{9 \, c^{2}(m-a)^{4}}{64 \, b^{5}}\\ 
&\qquad + \frac{27 \, c^{3} (m-a)^{5}}{128 \, b^{7}}- \frac{189\,c^4 (m-a)^6}{512\,b^9}+ \cdots
\end{split}
\end{equation}

\noindent
Notice that if $bK \ge 1$, then expansion (\ref{eq:vancubbook1}) is still valid under the looser condition $|cK^2|\le 0.01$, so, in particular, the full force of the condition $|cK^2|\le 0.01b$ has not been used yet.

Now the quartic term in (\ref{eq:vancubbook1}) satisfies $9c^2(m-a)^4/(64b^5) \le 100 c^2K^4/b< b$, which is small. And the quintic term satisfies $27 c^3(m-a)^5/(128 b^7)< b/K$, which is also small. In general, the $r^{th}$ term is of the form \mbox{$\eta_r c^{r-2}(m-a)^r/b^{2r-3}$}, where $|\eta_r|< 3^r$, and so it has size \mbox{$\le 9^r c^{r-2} K^r/b^{r-3} < bK^{4-r}$}.  The rapid decline in (\ref{eq:vancubbook1}) is useful because, as indicated earlier, the contributions of quartic and higher terms can now be eliminated from the exponent on the right side of (\ref{eq:vanbook}) by expanding them away as a polynomial in $m-a$ of relatively low degree, plus a small error term. This ensures the new sum produced by the van der Corput iteration is still cubic. Specifically, we obtain 

\begin{equation}\label{eq:sumboth}
\begin{split}
&\frac{\exp(2\pi i (f(x_m)-mx_m))}{\sqrt{|f''(x_m)|}}=\\
&\quad \exp\left(-\frac{2\pi i\, (m-a)^{2}}{4 \, b} + \frac{2\pi i\, c (m-a)^{3}}{8 \, b^{3}}\right)\, \sum_{j=0}^{\tilde{J}} \eta_{a,b,c,j} (m-a)^j+\mathcal{E}_{\tilde{J},a,b,c,m}\,,
\end{split}
\end{equation}

\noindent
where we have tactfully expanded the term $\sqrt{|f''(x_m)|}=\sqrt{2b}(1+3c(m-a)/b^2)^{1/4}$ in the denominator on the l.h.s. as a power series in $m-a$ as well. Notice that the full force of the condition $|cK^2|<0.01b$ still has not been used, and all is needed to ensure the new sum is still cubic is a looser bound like $|cK^2|<0.01\sqrt{b}$. 

By straightforward estimates, we have

\begin{equation}\label{eq:twoest22}
|\eta_{a,b,c,j}m^j| \le \frac{A}{\sqrt{b}}\,,\qquad |\mathcal{E}_{\tilde{J},a,b,c,m}|\le \frac{A}{\sqrt{b}}\left(\frac{1}{\lfloor \tilde{J}/4\rfloor !}+e^{-\tilde{J}}\right)\,,
\end{equation}

\noindent
where $A$ is some absolute constant, $4\le \tilde{J}$, and $\lceil a\rceil \le m\le \lfloor a+2bK+3cK^2\rfloor$. Therefore, given $a\in [0,1)$ and $b\in [0,1/4]$, and assuming $|cK^2| \le 0.01 b$ ($c$ is real), and $1\le bK$, as we have done so far, then summing both sides of (\ref{eq:sumboth}) over $\lceil a\rceil \le m\le \lfloor a+2bK+3cK^2\rfloor$,  the van der Corput iteration (\ref{eq:vanbook}) suggests a relation of the form 

\begin{equation}  \label{eq:vanbook2}
H(K,0;a,b,c)=\sum_{j=0}^{\tilde{J}} \tilde{z}_j \, H(\tilde{K},j; \tilde{a},\tilde{b},\tilde{c})+\mathcal{R}_2(a,b,c,K)+\mathcal{E}_{\tilde{J},a,b,c,K}\,.
\end{equation}

\noindent
where $\tilde{K}:=\tilde{K}(a,b,c,K)=\lfloor a+2bK+3cK^2\rfloor$, and

\begin{equation}
\tilde{z}_j:=\tilde{z}_{j,a,b,c,K}= \exp\left(\displaystyle \frac{\pi i}{4}-\frac{\pi i a^2(ac+2b^2)}{4b^3}\right) \, \tilde{K}^j\,\eta_{a,b,c,j}\,.
\end{equation}

\noindent
The remainder $\mathcal{R}_2:=\mathcal{R}_2(a,b,c,K)$ corresponds to the remainder term in iteration (\ref{eq:vanbook}), except possibly for an extra term of $-z_0$ in case $\lceil a\rceil =1$. Also, simple algebraic manipulations yield

\begin{equation} \label{eq:neww2}
\tilde{a}:=\tilde{a}_{a,b,c}= \frac{a}{2b}+\frac{3a^2c}{8b^3}\,,\quad \tilde{b}:=\tilde{b}_{a,b,c}=\frac{1}{4b}+\frac{3ac}{8b^3}\,,\quad \tilde{c}:=\tilde{c}_{a,b,c}=\frac{c}{8b^3}\,.
\end{equation} 

\noindent
And as a consequence of the estimates in (\ref{eq:twoest22}), we have 

\begin{equation}\label{eq:twoests}
|\tilde{z}_j|\le \frac{A}{\sqrt{b}}\,,\qquad |\mathcal{E}_{\tilde{J},a,b,c,K}|\le 3A\sqrt{b}K \left(\frac{1}{\lfloor \tilde{J}/4\rfloor !}+e^{-\tilde{J}}\right)\,,
\end{equation}

\noindent
By the second estimate in (\ref{eq:twoests}), if we choose $\tilde{J} =\lceil 4\log (K/\epsilon)\rceil$ say, we ensure $|\mathcal{E}_{\tilde{J},a,b,c,K}|\le A\epsilon$, which is small enough for purposes of the algorithm. So $\tilde{J}$ need not be taken large at all. 

The remainder term $\mathcal{R}_2$ in the van der Corput iteration (\ref{eq:vanbook2}) essentially corresponds to terms in the Poisson summation formula with no saddle-point contribution. This is because such contributions have already been extracted as the new cubic sums. So, guided in part by the case of quadratic sums, we expect $\mathcal{R}_2$ to involve relatively little cancellation among its terms, and that its computation is much less expensive than a direct evaluation of the original cubic sum. 

It will transpire that the bulk of the effort in computing $\mathcal{R}_2$ is essentially spent on dealing with a particularly simple type of an incomplete Gamma function like (\ref{eq:igf}) (which, as mentioned earlier, is where the bulk of the effort is spent in the quadratic case as well). It will also transpire there are many routes with great flexibility to evaluating the expressions and integrals occurring in $\mathcal{R}_2$. One finds several methods, applicable in overlapping regions, that can be used. This flexibility is somewhat analogous to that encountered in evaluating the incomplete Gamma function for general values of its arguments, where also many methods applicable in overlapping regions are available; see~\cite{Ru}. But in our case, the task is significantly simpler because the relevant ranges of the arguments will be fairly restricted. 

With this in mind, our goal for this article is not to develop especially practical techniques to compute the integrals occurring in $\mathcal{R}_2$, which is a somewhat challenging task on its own, rather, it is to obtain explicit and provably efficient techniques to achieve the complexity bounds claimed in Theorem~\ref{cubicthm}. For example, we make heavy use of Taylor expansions throughout, which simplifies some conceptual aspects of the techniques that we present, but it also often leads to large asymptotic constants and a loss of practicality. 

We expect the van der Corput iteration (\ref{eq:vanbook2}) to generalize even further. Under the same assumptions on $K$, $a$, $b$, and $c$ as before, and for $J$ of moderate size (say $J$ bounded by some absolute power of $\log K$), we expect a relation like

\begin{equation}\label{eq:vanbook3}
\begin{split}
\sum_{j=0}^J w_j\, H(K,j;a,b,c) = \sum_{j=0}^{J+\tilde{J}} \tilde{w}_j\, H(\tilde{K},j;\tilde{a},\tilde{b},\tilde{c})+\tilde{\mathcal{R}}_2 +\mathcal{\tilde{E}}_{K,\tilde{J}}\,,
\end{split}
\end{equation}

\noindent
where $\tilde{K}$, $\tilde{a}$, $\tilde{b}$, and $\tilde{c}$ are the same as in (\ref{eq:neww2}). This is because the additional term $k^j/K^j$ (on the l.h.s.) is not oscillatory, and so the form of the result is given by (\ref{eq:vanbook}), except for an extra factor of $(x_m)^j$ on the right side.

The utility of a transformation like (\ref{eq:vanbook3}) is it can be repeated easily, for we still obtain cubic exponential sums with each repetition. In other words, provided the cubic coefficient is small enough (specifically $cK^2 \le 0.01 b$), relation (\ref{eq:vanbook3}) enables us to circumvent one difficulty in the cubic case, which is the lack of self-similarity (recall the self-similarity of the Gaussian and the periodicity of the complex exponential are critical ingredients in our algorithm for computing quadratic sums). 

Importantly, if $a$ and $b$ continue to be normalized suitably at the beginning of each iteration of (\ref{eq:vanbook3}), and if $c$ continues to satisfy $|cK^2|\le 0.01b$ throughout, then the outcome after $m$ repetitions is a linear combination of $J+m\tilde{J}+1$ cubic sums, each of length $\le K/2^m+2$, where $K$ denotes the length of the original cubic sum (the sum we started with). It is straightforward to show the coefficients in the linear combination are bounded by an absolute power of $K$. And due to the lack of saddle-points the remainder term $\mathcal{R}_2$, we anticipate it will be computable accurately and efficiently enough for purposes of proving the complexity bounds of Theorem~\ref{cubicthm}. 

It remains to deal with the possibility that either (or both) of the conditions $1\le bK$ and $|cK^2| \le 0.01 b$ fails, or that $K$ gets too small in comparison with the given $\epsilon$, say $K\le \nu(K,\epsilon)^6$. The latter case is trivial though since the cubic sum can then be evaluated directly to within $\pm\,\epsilon$ in poly-log time in $K/\epsilon$. 

A failure of the condition $1\le bK$ implies $b<1/K$, which means $b$ is quite small. This is a boundary case of the algorithm. One can use the Euler-Maclaurin summation technique to compute the cubic sum to within $\pm\,\epsilon$ in poly-log time in $K/\epsilon$. This is possible to do because we will have $a+2bK+|3cK^2|=O(1)$, which means the derivatives of the summand $\exp(2\pi i a x +2\pi i b x^2+2\pi i c x^3)$, evaluated at $K$, will not grow too rapidly for purposes of applying the Euler-Maclaurin formula in such a way that only $O(\log (K/\epsilon))$ correction terms are needed. (Of course, depending on the exact size of $a+2bK+|3cK^2|$, one may first have to divide the cubic sum into $O(1)$ consecutive subsums, then apply the Euler-Maclaurin formula to each subsum.)  To see why the estimate $a+2bK+|3cK^2|=O(1)$ should hold, let $b'$, $c'$, and $K'$ denote the quadratic coefficient, cubic coefficient, and sum length from the previous iteration, respectively. Then $K\le 2b'K'+1$, $c=c'/(2b')^3$, and by hypothesis $|c'(K')^2|\le 0.01b'$, $1\le b'K'$, and $bK<1$. Combined, this implies $a+2bK+|3cK^2|< 3+|4c'(K')^2|/b'=O(1)$.  Notice this is the first time the full force of the condition $|cK^2|\le 0.01b$ has been used.

Put together, we may now assume the condition $|cK^2|\le 0.01b$ is the sole condition that fails.  Notice the reason the condition $|cK^2|\le 0.01b$ can eventually fail is that the new cubic coefficient is given by $c=c'/(8(b')^3)$, which for $b'\in (0,1/2)$ is greater than the previous cubic coefficient $c'$. So although the length of the cubic sum is cut by a factor of about $2b'$ with each application of the van der Corput iteration,  the size of the cubic coefficient grows by a factor of $1/((2b')^3$.  

Now, when the condition $|cK^2|\le 0.01b$ fails, we apply the van der Corput iteration exactly once more. Since $cK^2 \le c' (2b'K'+1)^2/(2b')^3\le 0.02$, the series expansion (\ref{eq:vancubbook1}) is still valid, but its convergence is quite slow.  In particular, the exponential sum resulting from this last application of the van der Corput iteration, which is a sum of length $\tilde{K} \le cK^3$ terms, is not necessarily cubic. In order to compute this (possibly high degree) sum efficiently, we ultimately rely on precomputed data. And as shown in \textsection{4} later, the cost of the precomputation is about $\tilde{K}^4 \le c^4K^{12}$ operations. 

In summary, starting with a cubic coefficient satisfying $c\in [0,K^{\mu-3}]$ say,  where $\mu\le 1$, we repeatedly apply the van der Corput iteration (\ref{eq:vanbook3}) until $K$ is not large enough, or the condition $1\le bK$ fails, or we encounter a sole failure of the condition $|cK^2|<0.01b$ ($b$ here refers to the normalized $b\in [0,1/4]$). The first two scenarios are relatively easy to handle, and represent boundary points of the algorithm. The third scenario is more complicated. There, we apply the van der Corput iteration exactly once more, which leads to an exponential sum of length roughly $\le K^{\mu}$ terms. This last sum is not necessarily cubic, and can be of high degree. Nevertheless, we show it can in fact be evaluated to within $\pm\,\epsilon$ in poly-log time in $K/\epsilon$ provided an FFT precomputation costing $K^{4\mu+o(1)}$ operations on numbers of $O(\log (K/\epsilon)^2)$ bits is performed. 

It might be helpful to keep the following prototypical example in mind. We start with $c=K^{\mu-3}$ and $b=K^{\mu-1}$, and apply the van der Corput iteration to the cubic sum. This produces a new sum of length $\tilde{K}\approx K^{\mu}$ terms. The new sum is still cubic because the contribution of the quartic and higher terms in the series (\ref{eq:vancubbook1}) is of size $O(1)$.  We observe that the new cubic coefficient $\tilde{c}=c/(2b)^3$ is  of size about $\tilde{K}^{-2}\approx K^{-2\mu}$, and that the van der Corput iteration must end since $\tilde{c}\tilde{K}^2\approx 1$. The last cubic sum is then evaluated in poly-log time using data that was precomputed via the FFT. The typical cost of the precomputation is about $\tilde{K}^4\approx K^{4\mu}$ steps since there are approximately $\tilde{K}$, $\tilde{K}^2$, and $\tilde{K}$, discretized values to consider for the linear, quadratic, and cubic, arguments in the last sum (see our description of Sch\"onhage's method earlier). 

We remark that many of steps of the previous outline still work even if we relax some of the conditions; e.g. it might be possible to relax the halting criteria $cK^2\le 0.01b$ to $cK^2\le 0.01\sqrt{b}$. So it might be possible to obtain some yet faster algorithms to compute cubic sums via this approach, which in turn lead to faster methods to compute $\zeta(1/2+it)$. In addition, the idea of repeated applications of the van der Corput iteration can be used to compute exponential sums of higher degree (as well as cubic sums with a larger cubic coefficient). However, the complexities of the resulting algorithms are not useful for computing zeta since the costs of their needed precomputations are too high. 

It is plain that the quadratic and cubic sums algorithms share many features, and it is desirable to take advantage of such similarities as directly and as fully as possible. So although the approach just outlined appears to be a natural way to generalize the quadratic sums algorithm of~\cite{Hi} to the case of cubic sums, a literal implementation of it does not make direct use of the techniques and methods already developed in~\cite{Hi}. We thus choose a different implementation that breaks up the cubic sum into simpler components most of which are already handled by~\cite{Hi}. This way, in the course of our derivation of the cubic sums algorithm, we avoid having to reconstruct the algorithm for quadratic sums from scratch.

Specifically, in the initial step of our implementation, we convert the cubic sum  to an integral involving a quadratic sum. This is followed by two phases (one of which is largely a direct application of the quadratic sums algorithm), then an elementary saddle-point calculation, and an FFT precomputation. We give a technical overview of this chain of steps in the next few paragraphs. 

Let $\epsilon\in (0,e^{-1})$, and $0\le \mu\le 1$, assume $K>1000$, and $c_0\in [0,K^{\mu-3}/100]$ say. It is straightforward to verify 
\begin{equation}\label{eq:stphase}
H(K,j;a_0,b_0,c_0)=\frac{1}{K^j}\int_{-1/2}^{1/2} F(K;a_0-x,b_0)  Q(K,j;x,c_0) \, dx \,,
\end{equation}
\noindent
where the quadratic sum $F(.)$ and the cubic sum $Q(.)$ are defined by 
\begin{equation}
\begin{split}
F(K;a_0,b_0)&:=\sum_{k=0}^K \exp(2\pi i a_0 k+2\pi i b_0 k^2)\,,\\
Q(K,j;x,c_0)&:= \frac{1}{K^j}\sum_{k=0}^{K} k^j \exp(2\pi i  x k +2 \pi i c_0 k^3)\,.
\end{split}
\end{equation}

In order to motivate the first phase of the algorithm, we observe that if $F(K;a_0-x,b_0)$ on the r.h.s. of (\ref{eq:stphase}) is replaced (completely unjustifiably) by $F(K;a_0,b_0)$, then the cubic and quadratic sums there become ``decoupled'', or independent of each other. Also, since $c$ is small, the cubic sum $Q(.)$ can essentially be converted, via the Euler-Maclaurin formula, to an integral that is not problematic to compute. So, under such a hypothetical replacement of $F(K;a_0-x,b_0)$ by $F(K;a_0,b_0)$, the bulk of the effort in computing the r.h.s. is essentially in computing a quadratic sum, which we already know how to do efficiently via Theorem~\ref{quadraticthm}. 

With this in mind, the purpose of the first phase is to ``decouple'' the quadratic and cubic sums in (\ref{eq:stphase}) by making the coefficient of $-x$ in $F(K;a_0-x,b_0)$, which starts at 1, as small as possible. We essentially regard the first phase as a useful technical device to allow us to apply the quadratic sums algorithm during the second phase. Slightly more explicitly, the first phase consists of $O(\log K)$ iterations. With each iteration, the size  of cubic coefficient $c_0$ in (\ref{eq:stphase}) grows, while the length of the cubic sum $Q(.)$ decreases. Also, the coefficient of $-x$ in $F(K;a_0-x,b_0)$, which starts at 1, decreases with each iteration. It is shown the number of operations cost of each iteration is polynomial in $\nu(K,j,\epsilon)$. 

The first phase ends when $c$, which denotes the current value of the cubic coefficient, starts becoming too large for the Euler-Maclaurin formula to accurately approximate the cubic sum $Q(.)$ in such a way that only a few correction terms are needed. This roughly occurs when $c N^2\approx 1$, where $N$ denotes the current length of the cubic sum. At that point, the cubic sum $Q(.)$ is converted to an integral, plus a few correction terms, via the Euler-Maclaurin formula, which yields a main expression of the form:
\begin{equation}\label{eq:sndphase} 
\frac{1}{N^j}\int_0^N \exp(2\pi i c y^3)\int_{-1/4}^{1/4} \exp(-2\pi i x y) F(K;a_0- \alpha_0\, x,b_0)  \, dx\,dy \,, 
\end{equation}

\noindent
where $0< \alpha_0 \lesssim K^{\mu-1}$, $N \lesssim K^{\mu}$, and $0< c < 1/N^2$ (see \textsection{4.1} for precise details).

In the second phase, we apply the quadratic sums algorithm to $F(K;a_0-\alpha_0\, x,b_0)$. It is straightforward to do so, despite the presence of the integral sign in (\ref{eq:sndphase}),  precisely because $\alpha_0$ is relatively small (recall $0< \alpha_0 \lesssim K^{\mu-1}$), and so the length of the quadratic sum, which is about $a_0-\alpha_0 x+2bK$, depends very weakly on $x$. With each iteration of the second phase, the size of $\alpha_0$ grows by a factor of $1/2b_0\ge 2$ while the length of the quadratic sum $F(.)$ is multiplied by a factor of about $2b_0\le 1/2$. It is shown the number of operations cost of each iteration is polynomial in $\nu(K,j,\epsilon)$. In general, the second phase is more time-consuming than the first one because it requires evaluating the remainder terms resulting from the quadratic sums algorithm. 

The second phase ends when the value of $\alpha_0$ nears 1, at which point further applications of the quadratic sums algorithm start becoming complicated. This is because the length of the quadratic sum, which is about $a_0-\alpha_0 x+2bK$, then depends measurably on $x$. At the end of the second phase, we are left with an expression of the form 
\begin{equation}\label{eq:nddphase}
\begin{split} 
\frac{1}{N^j}\int_0^N \exp(2\pi i c y^3)\int_{-1/4}^{1/4} & \exp(-2\pi i y x-2\pi i \alpha_1 x -2\pi i \alpha_2 x^2)\,\times\\
& F(M;a- \alpha x,b)  \, dx\,dy\,,
\end{split} 
\end{equation}

\noindent
where  $M\le N/\alpha$, $1/\Lambda(K,j,\epsilon)<\alpha<1$, and $\alpha$, $a$, and $b$, denote the values of $\alpha_0$, $a_0$, and $b_0$ at the end of the second phase. The numbers $\alpha_1$ and $\alpha_2$ in (\ref{eq:nddphase}) are certain real parameters that are related to $\alpha$ and satisfy $|\alpha_1|<4\alpha$ and $|\alpha_2|<\alpha$. 

To each term in the quadratic sum $F(M;a- \alpha x,b)$, there is an associated ``saddle-point with respect to $y$'' (see \textsection{4.3}). On extracting the saddle-point contributions (there are about $M$ of them), expression (\ref{eq:nddphase}) is reduced  to a short linear combination of $O(\nu(K,j,\epsilon))$ exponential sums of the form
\begin{equation} \label{eq:genericsums1}
\frac{1}{M^l} \sum_{k=0}^M k^l \exp(2\pi i \beta_1 k+2\pi i \beta_2 k^2+\ldots+2\pi i \beta_S k^S)\,, 
\end{equation}

\noindent
where $3\le S \le 3+\log N/\log M$, $0\le l =O(\nu(K,j,\epsilon))$, and the (real) coefficients $\beta_{s}$, $3\le s\le S$, will typically assume values in  restricted subintervals near zero of decreasing length with $s$; see \textsection{4.3}. The appearance of higher degree exponential sums in (\ref{eq:genericsums1}) is simply a reflection of the growth in the size of the cubic coefficient during the first phase, which implies that expansion (\ref{eq:vancubbook1}) from earlier converges more slowly, leading to higher degree exponential sums.

Next, we perform a ``dyadic subdivision'' of the sums (\ref{eq:genericsums1}) so that we may restrict our attention to lengths of the form $M=2^n$. For each relevant value of $n$, $l$, and $S$, we precompute the sums (\ref{eq:genericsums1}) on a dense enough grid of points, taking into account that the coefficients $\beta_s$, $3\le s\le S$, are generally small in size.  There are also some relations among the coefficients $\beta_4,\beta_5,\ldots,\beta_S$, which are useful during the FFT precomputation in the case $M$ much smaller than $N$. It is shown in \textsection{4.4}  that the overall cost of the FFT precomputation is about $N^{4+o(1)}=K^{4\mu+o(1)}$ operations. Once the precomputation is finished, the sums (\ref{eq:genericsums1}) can be evaluated quickly elsewhere via Taylor expansions, as claimed in Theorem~\ref{cubicthm}. 

\section{The algorithm for cubic exponential sums with a small cubic coefficient}

Let $\lfloor x \rfloor$  denote the largest integer less than or equal to $x$ , $\lceil x \rceil$ denote smallest integer greater than or equal to $x$, $\{x\}$ denote $x-\lfloor x \rfloor$, and $\log x$ denote $\log_e x$. Let  $\exp(x)$ and $e^x$ both stand for the usual exponential function (they are used interchangeably). We define $0^0:=1$ whenever it occurs. We measure complexity (or time) by the number of arithmetic operations on numbers of $O((\log t)^{\kappa_0})$ bits required, where $\kappa_0$ is an absolute constant (not necessarily the same for different methods to compute zeta). An {\em{arithmetic operation}} means an addition, a multiplication, an evaluation of the logarithm of a positive number, or an evaluation of the complex exponential. In what follows, asymptotic constants are absolute unless otherwise is indicated. 

Let $\mu\in [0,1]$, $\epsilon \in (0,e^{-1})$, $0\le j$, $0<K$, $a_0\in [0,1)$, $b_0\in [0,1)$, and $c_0\in [0,K^{\mu-3}]$. As before, $\nu(K,j,\epsilon):=(j+1)\log (K/\epsilon)$, and we define $\Lambda(K,j,\epsilon):=50^4 \nu(K,j,\epsilon)^{6}$ say. We also define $F(K;a,b):=F(K,0;a,b)$. 

In this section, different occurrences of $K$, $j$, and $\epsilon$  will denote the same values. For this reason, we drop the dependence of $\Lambda$ and $\nu$ on $K$, $j$, and $\epsilon$, throughout \textsection{4}. We use the same computational model as the one described at the beginning of \textsection{2}. Arithmetic is performed using $O(\nu(K,j,\epsilon)^2)$-bits. And any implicit asymptotic constants are absolute, unless otherwise is indicated.

In order to spare the reader some straightforward and repetitious details, we will often use informal phrases such as ``It is possible to \textit{reduce} (or \textit{simplify}) the problem of computing the function $X_{K,j}(.)$ to that of computing the function $Y_{K,j}(.)$,''  or ``In order to compute the function $X_{K,j}(.)$, it \textit{is enough} (or \textit{suffices}) to compute the function $Y_{K,j}(.)$." This will mean there are absolute constants $\tilde{\kappa}_3$, $\tilde{\kappa}_4$, $\tilde{B}_1$, $\tilde{B}_2$, and $\tilde{B}_3$ (not necessarily the same on different occasions) such that for any positive $\epsilon < e^{-1}$, if the function $Y_{K,j}(.)$ can computed for any of the permissible values of its arguments to within $\pm\,\epsilon$, then the function $X_{K,j}(.)$ can be computed for any of the permissible values of its arguments to within $\pm\, \tilde{B}_1\,\nu(K,j,\epsilon)^{\tilde{\kappa}_3}\epsilon$ using at most $\tilde{B}_2\,\nu(K,j,\epsilon)^{\tilde{\kappa}_4}$ arithmetic operations on numbers of $\tilde{B}_3\,\nu(K,j,\epsilon)^2$ bits. The meaning of the phrase ``permissible values of the arguments'' will be clear from the context. 

Similarly, we frequently say ``the function $X_{K,j}(.)$ can be \textit{computed} (or \textit{evaluated}) \textit{efficiently} (or \textit{quickly}).'' This means there are absolute constants $\tilde{\kappa}_5$, $\tilde{\kappa}_6$, $\tilde{B}_4$, $\tilde{B}_5$, and $\tilde{B}_6$ (not necessarily the same on different occasions) such that for any positive $\epsilon < e^{-1}$,  the function $X_{K,j}(.)$ can be computed for any of the permissible values of its arguments to within $\pm\,\tilde{B}_4\,\nu(K,j,\epsilon)^{\tilde{\kappa}_5}\epsilon$ using at most $\tilde{B}_5\,\nu(K,j,\epsilon)^{\tilde{\kappa}_6}$ arithmetic operations on numbers of $\tilde{B}_6\,\nu(K,j,\epsilon)^2$ bits. 

We may assume $K>\Lambda$, otherwise the cubic sum can be evaluated directly in $O(\Lambda)$ operations on numbers of $O(\nu)$ bits. Notice by the conventions just presented, we will often abbreviate this, and similar statements, by saying that the cubic sum can be computed \textit{efficiently} or \textit{quickly} if $K<\Lambda$. 

As stated at the beginning of the section, $c_0\in [0,K^{\mu-3}]$. We may assume $\mu \ge 0$, because if $\mu< 0$ we have $cK^3\le 1$, so by a routine application of Taylor expansions, the term $\exp(2\pi i c k^3)$ can be reduced to a polynomial in $k$ of degree $O(\nu)$, plus an error of size $O(\epsilon/K)$ say. As it is clear the coefficients of this polynomial are quickly computable and are of size $O(1)$ each, then the cubic sum can be expressed as a linear combination of $O(\nu)$ quadratic sum, plus an error of size $O(\epsilon)$. And since each such quadratic sum can be computed efficiently via Theorem~\ref{quadraticthm}, so can the cubic sum. 

We may also assume $c_0 K^2<1/\Lambda^4$ (this is convenient to assume during the first phase of the algorithm in \textsection{4.1}). Because if $K^{-2}/\Lambda^4\le c_0\le K^{-2}$, then by a procedure completely similar to that used in describing Sch\"onhage method earlier, the cubic sum can be computed quickly provided an FFT precomputation costing $\le \Lambda K^4$ operations on numbers of $O(\nu)$ bits, and requiring $\le \Lambda K^{4}$ bits of storage, is performed. In particular, since $K^{-2}/\Lambda^4\le c_0$, then $\mu$ in the statement of Theorem~\ref{cubicthm} satisfies $1-4(\log\Lambda)/(\log K)\le \mu $. Therefore, $K^4 \le \Lambda^{16} K^{4\mu}$, and the claim of Theorem~\ref{cubicthm} over $K^{-2}/\Lambda^4\le c_0 \le K^{-2}$ holds anyway. 

\subsection{The first phase: decoupling the cubic and quadratic sums}

As before, let $Q(K,j;x,c_0):=H(K,j;x,0,c_0)$, so

\begin{equation}
Q(K,j;x,c_0):= \frac{1}{K^j}\sum_{k=0}^{K} k^j \exp(2\pi i  x k +2 \pi i c_0 k^3)\,.
\end{equation}

\noindent
Also define

\begin{equation}
S_{p,K,j,a_0,b_0} (N_0,c_0,\alpha_{0,0}):= \frac{1}{(N_0)^j}\int_{I_p} F(K;a_0- \alpha_{0,0}\, x,b_0) Q(N_0,j;x,c_0)\, dx\,, 
\end{equation}

\noindent
where $I_1:=[-1/2,-1/4)$,  $I_2:=[-1/4,1/4]$, and $I_3:=(1/4,1/2]$. By a straightforward calculation, we have

\begin{equation}\label{eq:stphasehkj}
\begin{split}
H(K,j;a_0,b_0,c_0) &=S_{1,K,j,a_0,b_0}(K,c_0,1)+S_{2,K,j,a_0,b_0}(K,c_0,1)\\
&+S_{3,K,j,a_0,b_0}(K,c_0,1)\,.
\end{split} 
\end{equation}

We do not expect the terms $S_1(.)$ and $S_3(.)$ in (\ref{eq:stphasehkj}) to be computationally troublesome because if $K$ is large enough, which we are assuming, then the derivative with respect to $k$ of  $x k + c_0 k^3$ (this is the exponent of the summand in $Q(.)$) never vanishes over $1/4\le |x|$, $0\le k\le K$, and $0\le c_0 \le K^{\mu-3}$, with $c_0K^2< 1/\Lambda^4$. So once $Q(.)$ is converted to an integral via the Euler-Maclaurin summation formula, as we plan to do, then the resulting cubic exponential integral will not contain any saddle-points. Thus, ultimately, we expect $S_1(.)$ and $S_3(.)$ can be expressed as a linear combination of a few quadratic exponential sums.

Indeed, on applying the Euler-Maclaurin summation formula to the cubic sum $Q(K,j;x,c_0)$ in $S_{3,K,j,a_0,b_0}(K,c_0,1)$, for instance, we obtain, via auxiliary lemma~\ref{lem:la1} in \textsection{5}, an expression of the form 

\begin{equation}\label{eq:subsec1add}
\begin{split}
S_{3,K,j,a_0,b_0}(K,c_0,1)=&\frac{1}{K^j}\int_{1/4}^{1/2} \int_0^K y^j \exp(2\pi i c_0 y^3+2\pi i xy) \,\times \\
& F(K;a_0- x,b_0) \, dy\, dx + R_{3,K,j,a_0,b_0}(K,c_0,1)\,,
\end{split}
\end{equation}

\noindent 
where $R_{3,K,j,a_0,b_0}(K,c_0,1)$ is a remainder function arising from the correction terms in the Euler-Maclaurin formula. 

We claim the the remainder $R_{3,K,j,a_0,b_0}(K,c_0,1)$ can be computed efficiently (in poly-log time). For as an immediate consequence of auxiliary lemmas~\ref{lem:la3} and~\ref{lem:la4} in \textsection{5}, the remainder $R_{3,K,j,a_0,b_0}(K,c_0,1)$ can be written as a linear combination of $\le \tilde{B}_7 \nu$ quadratic exponential sums, plus an error of size $\le \tilde{B}_8\,\nu^{\tilde{\kappa}_7}\, \epsilon/K^2$, where $\tilde{B}_7$, $\tilde{B}_8$, and $\tilde{\kappa}_7$, are absolute constants (notice an error size of $\le \tilde{B}_8\,\nu^{\tilde{\kappa}_7}\, \epsilon/K^2$ is small enough for purposes of proving Theorem~\ref{cubicthm}). The coefficients of said linear combination can be computed to within $\pm\,\nu^{\tilde{\kappa}_8}\,\tilde{B}_9\,\epsilon$ using $\le \tilde{B}_{10} \nu^{\tilde{\kappa}_9}$ operations on numbers of $\le \tilde{B}_{11} \nu^2$ bits, and each coefficient is of size $\le \tilde{B}_{12}$, where $\tilde{B}_9$, $\tilde{B}_{10}$, $\tilde{B}_{11}$, $\tilde{B}_{12}$, $\tilde{\kappa}_8$, and $\tilde{\kappa_9}$, are absolute constants. For simplicity, we will often abbreviate the above technical details by saying ``$R_{3,K,j,a_0,b_0}(K,c_0,1)$ can be written as a linear combination, with quickly computable coefficients each of size $O(1)$, of  $O(\nu)$ quadratic sum, plus an error of size $O(\epsilon/K^2)$.'' Now, since each quadratic sum can be computed efficiently via Theorem~\ref{quadraticthm}, then so can the remainder $R_{3,K,j,a_0,b_0}(K,c_0,1)$. 

As for the main term in (\ref{eq:subsec1add}), which is a double integral, we have by auxiliary lemma~\ref{lem:la5} that it too can be written as a linear combination, with quickly computable coefficients each of size $O(1)$, of $O(\nu)$ quadratic exponential sums, plus an error of size $O(\epsilon/K^2)$. The treatment of the term $S_{1,K,j,a_0,b_0}(K,c_0,1)$ in (\ref{eq:stphasehkj}) is almost identical to that of the term $S_{3,K,j,a_0,b_0}(K,c_0,1)$.

So it remains to tackle the term $S_{2,K,j,a_0,b_0}(K,c_0,1)$ in (\ref{eq:stphasehkj}). This is the computationally demanding term because it is where the cubic exponential integral obtained from the cubic sum $Q(.)$ can contain saddle-points. For simplicity, assume $K$ is a power of 2. The argument to follow is easily modifiable to the case $K$ not a power of 2. We define 

\begin{equation}
N_m:=K/2^m\,,\qquad c_m:=2^{3m}c_0\,,\qquad  \alpha_{0,m}:=2^{-m}\,.
\end{equation}

\noindent
Notice $S_{2,K,j,a_0,b_0}(K,c_0,1)=S_{2,K,j,a_0,b_0}(N_0,c_0,\alpha_{0,0})$. By splitting $Q(.)$ into a sum over the evens and a sum over the odds we obtain

\begin{equation}\label{eq:cub111}
Q(N_m,j;x,c_m)=2Q(N_{m+1},j;2x,c_{m+1})-Q(N_m,j;x+1/2,c_m)\,. 
\end{equation}

\noindent
By the definitions of $N_m$, $c_m$, and $\alpha_{0,m}$, coupled with the transformation (\ref{eq:cub111}) and the change of variable $x\leftarrow 2x$ applied to the integral with respect to $x$ in $S_2(N_m,c_m,\alpha_{0,m})$, we obtain

\begin{equation} \label{eq:it1}
\begin{split}
S_{2,K,j,a_0,b_0} (N_m,c_m,\alpha_{0,m}) &=  S_{2,K,j,a_0,b_0}(N_{m+1},c_{m+1},\alpha_{0,m+1}) \\
&+ S_{1,K,j,a_0,b_0}(N_{m+1},c_{m+1},\alpha_{0,m+1}) \\
&+S_{3,K,j,a_0,b_0}(N_{m+1},c_{m+1},\alpha_{0,m+1}) \\
&-S_{4,K,j,a_0+\alpha_{0,m+1},b_0}(N_m,c_m,\alpha_{0,m})\,. 
\end{split}
\end{equation}

\noindent
where the integral with respect to $x$ in $S_4(.)$ is taken over the interval $I_4:=(1/4,3/4]$. 

Again, by the auxiliary lemmas~\ref{lem:la1} through~\ref{lem:la5} in \textsection{5}, the functions $S_1(.)$, $S_3(.)$, and $S_4(.)$, on the right side of (\ref{eq:it1})  can be computed efficiently provided $c_m N_m^2<1/\Lambda$ say (this condition ensures the Euler-Maclaurin formula can approximate the cubic sum $Q(.)$ by an integral to within $O(\epsilon/K^2)$ using only $O(\nu)$ correction terms, which, since $1/4\le |x|$, ensures said integral never contains a saddle-point). 

So by repeating the transformation (\ref{eq:it1}) at most $\lceil \log_2 K \rceil$ times, we reach either $1/\Lambda< c_mN_m^2$ or $N_m<\Lambda$. The latter is a boundary point of the algorithm since the problem simplifies to computing a total of $N_m+1 \le \Lambda$ functions of the form $\int_{-1/2}^{1/2} \exp(2\pi i x n) F(K;a_0-\alpha_{0,m}\, x, b_0) \, dx$, where $0\le n \le N_m$ is an integer. As an immediate consequence of the proof of lemma~\ref{lem:la4} (see the calculations following (\ref{eq:done11}) there), such functions can be evaluated efficiently since they reduce to quadratic sums. So we may assume $\Lambda\le N_m$, and that the first phase is ended due to a failure of the condition $0\le c_mN_m^2 \le 1/\Lambda$. By the definitions of $c_m$ and $N_m$, a failure of this condition implies $ 1/\Lambda < (2^{3m} c) (K/2^m)^2$, and hence $\alpha_{0,m}=2^{-m} < \Lambda c K^2$. Recalling that  \mbox{$0\le c \le K^{\mu-3}$} by hypothesis, it follows $\alpha_{0,m}\le \Lambda K^{\mu-1}$, which in turn implies $N_m=\alpha_{0,m} K\le \Lambda K^{\mu}$. Notice also, since $c_0N_0\le 1/\Lambda^4$ by hypothesis, then $\alpha_{0,m}\le 1/\Lambda$. 

Put together, letting $N$, $\alpha_0$, and $c$ denote the values of $N_m$, $\alpha_{0,m}$, and $c_m$, respectively, at the end of the first phase, our task has been reduced to numerically evaluating (to within $\pm\,\nu^{\kappa}\,\epsilon$, for any absolute $\kappa$)  the function

\begin{equation}\label{eq:emmcc}
\frac{1}{N^j}\int_{-1/4}^{1/4} \int_0^N  y^j \exp(2\pi i c y^3-2\pi i xy)\,F(K;a_0+\alpha_0\, x,b_0) \, dy\, dx\,,
\end{equation}

\noindent
where, for later convenience, we made the change of variable $x\leftarrow -x$ in (\ref{eq:emmcc}). Here, $a_0 \in [0,1)$, $b_0 \in [0,1)$, and $N$, $c$, and $\alpha_0$, satisfy the bounds

\begin{equation} \label{eq:assump1}
1/\Lambda \le  c N^2 \le 2/\Lambda\,, \,\,\,\, \Lambda \le N\le \Lambda K^{\mu}\,, \,\,\,\, \Lambda/K \le \alpha_0 \le \min\{\Lambda K^{\mu-1},1/\Lambda\} \,.
\end{equation}

\subsection{The second phase: the algorithm for quadratic sums}

Each term in the quadratic sum $F(K;a_0+\alpha_0\, x,b_0)$ in (\ref{eq:emmcc}) has a saddle-point associated with it; see \textsection{4.3} for a precise formulation of this. If we extract the contribution of each saddle-point at this point of the algorithm (like done in \textsection{4.3} later), we obtain a sum of length $K$ terms, which is of the same length as the original cubic sum $H(K,j;a_0,b_0,c_0)$. But if we are able to cut the length of $F(K;a_0+\alpha_0\, x,b_0)$, then there will be fewer saddle-points to consider. 

To cut the length, we employ the algorithm of~\cite{Hi}. By the periodicity of the complex exponential we have

\begin{equation}\label{eq:periodcubic}
\begin{split}
F(K; a_0+ \alpha_0 \,x, b_0) &= F(K; a_0+ \alpha_0 \,x\pm\, 1/2,b_0 \pm\, 1/2)\\
&= F(K; a_0+ \alpha_0 \, x\pm\, 1/2,b_0 \mp \, 1/2)\,.
\end{split}
\end{equation}

\noindent
Using (\ref{eq:periodcubic}), it is not too hard to see given any pair $(\tilde{a}_0,\tilde{b}_0) \in [0,1)\times [0,1)$, and any $0\le \tilde{\alpha}\le 1/\Lambda$ say, there is a quickly computable pair $(\tilde{a}_1, \tilde{b}_1)$, depending only on $\tilde{a}_0$ and $\tilde{b}_0$, that satisfies 

\vspace{6pt}
\begin{enumerate}\itemsep10pt
\item[\textbf{N1.}] $\tilde{a}\in[0,2]$,
\item[\textbf{N2.}] $\tilde{b}_1\in [0,1/4]$,
\item[\textbf{N3.}] $\tilde{a}_1+ \tilde{\alpha}\, x \in (0,2)$ for all $x\in [-1/4,1/4]$,
\end{enumerate}

\vspace{8pt}
\noindent
and such that either 

\begin{equation}\label{eq:stposs}
F(K;\tilde{a}_0+\alpha_0\, x,\tilde{b}_0)= F(K;\tilde{a}_1+ \alpha_0\, x,\tilde{b}_1)\,,
\end{equation}

\noindent
or 

\begin{equation}\label{eq:ndposs}
F(K; \tilde{a}_0+\alpha_0 \, x,\tilde{b}_0)= \overline{F(K;\tilde{a}_1- \alpha_0 \, x,\tilde{b}_1)}\,.
\end{equation}

\noindent
The pair $(\tilde{a}_1, \tilde{b}_1)$ is not necessarily unique.

Without loss of generality, we may assume the original pair $(a_0,b_0)$ in (\ref{eq:emmcc}) already satisfies the normalization conditions N1 and N3, and that $b_0\in [-1/4,1/4]$. Also, for now, let us assume $\lceil a_0+ \alpha_0\, x\rceil < \lfloor a+ \alpha_0\, x + 2 |b_0| K \rfloor$ holds for all $x\in [-1/4,1/4]$. Notice this immediately implies $|b_0|\ge 1/(2K)$, so $|b_0|$ cannot be too small. Under such circumstances, lemma 6.6 in ~\cite{Hi} applies. If $b_0\in (0,1/4]$, that lemma yields:

\begin{equation} \label{eq:van0011}
\begin{split}
F(K; a_0+ \alpha_0\, x, b_0)=&\frac{1}{\sqrt{2b_0}}\, e^{ \pi i /4-\pi i (a_0+\alpha_0\, x)^2/(2b_0)} \,  F\left(\lfloor 2 b_0 K \rfloor;\frac{ a_0+ \alpha_0 \, x}{2 b_0},-\frac{1}{4 b_0}\right)\\
& +R(K,a_0+ \alpha_0\, x, b_0)+O(K^2\epsilon +e^{-K})\,, 
\end{split}
\end{equation}

\noindent
which is valid for any $\epsilon \in (0,e^{-1})$, and any $x\in [-1/4,1/4]$. And if $b_0\in [-1/4,0)$, which implies conjugation is needed to ensure condition N2 holds, we obtain the same formula as (\ref{eq:van0011}) except the right side is replaced by its conjugate, and $\alpha_0$, $b_0$, and $a_0$, are replaced by $-\alpha_0$, $-b_0$, and $1-a_0$ or $2-a_0$, respectively. In either case, the resulting remainder term  $R(K,a_0+\alpha_0 \, x, b_0)$ in (\ref{eq:van0011}) is fully described by lemma 6.6 in~\cite{Hi}, as we discuss here later. (It might be helpful to consult lemmas 6.6 and 6.7 in~\cite{Hi} at this point.) 

We can repeatedly apply formula (\ref{eq:van0011}) for as long as the analogue of the condition $\lceil a_0+\alpha_0\, x\rceil < \lfloor a_0+\alpha_0\, x+2|b_0| K\rfloor$ holds for all $x\in [-1/4,1/4]$. After $m$ such applications say, we arrive at an expression of the form:

\begin{equation} \label{eq:pervan}
\begin{split}
F(K;a_0+\alpha_0\, x,b_0)&= D_m \, e^{-2 \pi i\, \alpha_{1,m}\, x- 2 \pi i\, \alpha_{2,m}\, x^2}\, F\left(K_m; a_m+ \alpha_m\, x, b_m\right)  \\
&\quad + R_m(K;a_0,\alpha_0, x,b_0) +O(K^2 \epsilon+ e^{-K_m})\,,
\end{split}
\end{equation}

\noindent
where 

\begin{equation} \label{eq:rrmm}
\begin{split}
K_l \le 2b_0\,2b_1\,\ldots 2b_{l-1} K\,,\\
R_m(K;a_0,\alpha_0,x,b_0):=&\sum_{l=0}^{m-1} D_l\, e^{-2 \pi i\, \alpha_{1,l}\, x- 2 \pi i \,\alpha_{2,l}\, x^2}\, R(K_l,a_l+ \alpha_l\, x,b_l)\,. 
\end{split}
\end{equation}

For example, if $b_0\in (0,1/4]$ and $m=1$, then $K_0:=K$, $K_1:=\lfloor 2b_0 K_0\rfloor$, $\alpha_1:=\alpha_0/(2b_0)$, $\alpha_{1,0}=0$, $\alpha_{1,1}:= \alpha_{1,0}+a_0\alpha_1$, $\alpha_{2,0}=0$, $\alpha_{2,1}:=\alpha_{2,0}+b_0 (\alpha_1)^2$, $D_0=1$, and $D_1:= D_0\,(2b_0)^{-1/2} e^{\pi i /4-\pi i (a_0)^2/(2b_0)}$. As for $a_1$ and $b_1$, they are defined according to whether the normalization procedure expresses $F(K_1; a_0/(2b_0)+\alpha_1 x, -1/(4b_0))$ as  $F(K_1; \tilde{a}+\alpha_1 x, \tilde{b})$ or as $\overline{F(K_1; \tilde{a}-\alpha_1 x, \tilde{b})}$, for some $\tilde{a}$ and $\tilde{b}$ satisfying conditions N1, N2, and N3. In the former case we define $a_1:=\tilde{a}$ and $b_1:=\tilde{b}$, and in the latter case we define $a_1:=-\tilde{a}$ and $b_1:=-\tilde{b}$. It is understood if $b_l$ in (\ref{eq:rrmm}) is negative, the remainder $R(K_l, a_l+\alpha_l x, b_l)$ stands for $\overline{R(K_l, -a_l-\alpha_l x, -b_l)}$.  

The numbers $K_l$, $a_l$, $b_l$, $\alpha_l$, $\alpha_{1,l}$, $\alpha_{2,l}$, and $D_l$ are quickly computable. We only need the following properties for them, which are valid for $1\le l\le m$, where $m$ denotes the number of repetitions of formula (\ref{eq:van0011}), or its conjugate analogue, so far: 

\begin{equation} \label{eq:obs001}
\begin{split}
& \alpha_{l-1}/\alpha_l = |2b_{l-1}|\le 1/2 \,, \qquad |D_l| = 1/\sqrt{|2b_0\,2b_1\,\ldots 2b_{l-1}|}\le \sqrt{K}\,,\\
& 0< \alpha_{1,l} = \sum_{r=1}^{l} |a_{r-1}| \alpha_r < 4\alpha_l\,,\qquad  |\alpha_{2,l}| = \sum_{r=1}^{l} |b_{r-1}| (\alpha_r)^2  < \min\{\alpha_l,(\alpha_l)^2\}\,. 
\end{split}
\end{equation}

Each application of formula (\ref{eq:van0011}) reduces the length of the quadratic sum by a factor of about $2b_0\le 1/2$, but it also multiplies the size of $\alpha_0$ by a factor of $1/(2b_0)\ge 2$. So during the second phase, the length of quadratic sum decreases, while the size of the parameter $\alpha_0$ grows. Eventually, formula (\ref{eq:van0011}) is no longer useful because, essentially, due to the growth in $\alpha_0$, the length of the new quadratic sum will start to depend strongly on $x$. The precise point at which we stop applying formula (\ref{eq:van0011}) is determined by the following criteria: let $m_0$ be the first non-negative integer for which at least one  of the following conditions fails (notice multiple conditions can fail at the same time):

\vspace{6pt}
\begin{enumerate}\itemsep10pt
\item[\textbf{C1.}] $\alpha_{m_0} \le 1/\Lambda\,,$
\item[\textbf{C2.}] $\Lambda \le  K_{m_0}\,,$
\item[\textbf{C3.}] $\left\lceil |a_{m_0}|+\alpha_{m_0} x \right\rceil < \left \lfloor |a_{m_0}|+ \alpha_{m_0} x +2|b_{m_0}| K_{m_0} \right \rfloor$ for some $x\in [-1/4,1/4]$.
\end{enumerate}

\vspace{8pt}
\noindent
Then the second phase is ended after exactly $m_0$ applications of formula (\ref{eq:van0011}), or its conjugate analogue. We observe since $2b_l\le 1/2$, then $K_{l+1} \le K_l/2$. So by construction, $m_0 \le \log_2 K$. 

A failure of condition C2 or condition C3 is not hard to handle, and in fact represents a boundary points of the algorithm (while a failure of condition C1 is substantially more difficult to deal with, and will occupy most of this remainder of this subsection). For the former means $K_{m_0}$ is not large enough, and the latter means $b_{m_0}$ is too small. If $K_{m_0}$ is not large enough, then, ignoring the remainder $R_{m_0}(.)$ for the moment, we need to deal with the sum

\begin{equation}\label{eq:ext2nds}
\begin{split}
\sum_{k=0}^{K_{m_0}} & \exp(2\pi i\, a_{m_0} k+2\pi i\, b_{m_0} k^2) \,\frac{D_{m_0}}{N^j}\int_{0}^{N} y^j \, \exp(2\pi i c y^3) \,\times\\
&\qquad \int_{-1/4}^{1/4} \exp\left(2\pi i yx+2\pi i (\alpha_{m_0}\, k - \alpha_{1,m_0})x -2\pi i \alpha_{2,m_0}\,x^2\right) \, dx\, dy\,.
\end{split}
\end{equation}

\noindent
Since $K_{m_0}\le \Lambda$, it suffices to deal with this sum term by term. Lemma~\ref{lem:la6} shows each term in (\ref{eq:ext2nds}) can indeed be computed efficiently (because, essentially, the integral over $y$ in each term contains at most one saddle-point, and there are only $O(\Lambda)$ terms). And if condition C3 is the one that fails (so $b$ is too small), then the Euler-Maclaurin formula can be applied to $F(.)$, which leads to a triple integral 

\begin{equation}\label{eq:emcinttt}
\begin{split}
\frac{1}{N^j}\int_{0}^{N} y^j \exp(2\pi i c y^3) \int_{-1/4}^{1/4} \exp\left(2\pi i yx-2\pi i \alpha_{1,m_0} x -2\pi i \alpha_{2,m_0}\,x^2\right)\, \times \\
\int_0^{K_{m_0}} \exp(2\pi i \alpha_{m_0}  x z+ 2\pi i a_{m_0} z+2\pi i b_{m_0} z^2) \, dz \, dy\, dx\,. 
\end{split}
\end{equation}

\noindent
plus a remainder term arising from the correction terms in the Euler-Maclaurin formula. Cauchy's Theorem as well as saddle-point techniques very similar to those carried out in \textsection{4.3} later allow us to reduce (\ref{eq:emcinttt}) to double integrals (with respect to $x$ and $y$) of the type handled by Lemma~\ref{lem:la6}. The calculations involved are tedious but elementary to do, and they involves considering several cases; see the discussion following (\ref{eq:genhif2}) for instance. 

Put together, we may assume conditions C2 and C3 still hold by the last iteration (that is, $K_{m_0}>\Lambda$ and $\alpha_{m_0} > 1/\Lambda$), and the algorithm halts due to a failure of condition C1. In other words, our task has been reduced to showing how to deal with a sole failure of condition C1, and also to dealing with the remainder functions

\begin{equation}\label{eq:rrmm12}
\begin{split}
\frac{D_l}{N^j} \int_{0}^{N} y^j \exp(2\pi i c y^3) \int_{-1/4}^{1/4} & \exp(-2\pi i x y -2 \pi i \alpha_{1,l}\, x- 2 \pi i \alpha_{2,l}\, x^2) \times  \\
&\quad R(K_l,a_l+ \alpha_l\,x,b_l)\, dx\, dy\,, 
\end{split}
\end{equation}

\noindent
for $0\le l < m_0$. 

Let us deal with the remainder functions first. We will show how to efficiently compute (\ref{eq:rrmm12}). To this end, suppose $b_l$ (hence $a_l$) is positive. Let $[w,z)$ be any subinterval of $[-1/4,1/4)$ such that $\lfloor a_l+ \alpha_l x+ 2 b_l K_l\rfloor$ and $\lceil a_l+ \alpha_l x \rceil$ are constant for all $x\in [w,z)$. Since $\alpha_l<1/\Lambda$, the interval $[-1/4,1/4)$ can be written as the union of at most 4 such subintervals, and these subintervals can be determined quickly. Similarly, if $b_l$ (hence $a_l$) is negative, we choose the subinterval $[w,z)\subset [-1/4,1/4)$ so that $\lfloor -a_l- \alpha_l x- 2 b_l K_l\rfloor$ and $\lceil -a_l- \alpha_l x \rceil$ are constant for all $x\in [w,z)$. Since the treatments of these possibilities are analogous, let us focus out attention on the case $b_l$ is positive. 

Let $ r,d \in [0, 1000\,\nu(K,\epsilon)]$ be integers, and let $\omega_{a_0+\alpha_0\, x}:=\omega_{a_0+\alpha_0\, x, b_0, K_0}=\{a_0+\alpha_0\, x+2b_0K_0\}$, where $\{y\}$ denotes the fractional part of $y$, and let $\omega_{1,a_0+\alpha_0\, x}:=\lceil a_0+\alpha_0\, x\rceil -(a_0+\alpha_0\, x)$. Then by lemma~6.7 in~\cite{Hi}, we have that over $x\in [w,z)$ the remainder $R(K_l,a_l+\alpha_l\,x,b_l)$ can be written as a linear combination of the functions

\begin{equation}\label{eq:remform1}
 x^r\,, \qquad  x^r \exp\left(2\pi i \alpha_l x K_l \right)\,, \qquad  \exp\left[2\pi i P\,\alpha_{l+1} x - 2\pi i b_l\,(\alpha_{l+1})^2 \, x^2 \right]\,,  \\
\end{equation}

\vspace{10pt}

\noindent
where $P\in \{-1,0,K_{l+1},K_{l+1}+1 \}$, and the functions

\begin{equation}\label{eq:remform2}
\begin{split}
&\exp\left[ 2\pi i \,\omega_{a_l +\alpha_l x}\,Q - 2\pi (1-i)r\, \frac{\omega_{a_l+ \alpha_l x}}{\sqrt{2b_l}}\right]\times \\
& \qquad\qquad\qquad\qquad \int_0^1 t^d \exp\left[ - 2\pi (1- i)\, \frac{\omega_{a_l+ \alpha_l x}}{\sqrt{2b_l}}t-2 \pi r t\right]\,dt\,, 
\end{split}
\end{equation}

\vspace{10pt}

\noindent
where $Q\in \{0,K_l\}$, and the functions

\begin{equation}\label{eq:remform3}
(\omega_{a_l+\alpha_l x})^r\, \exp\left[2\pi i \, \omega_{a_l+ \alpha_l x}\, L -  2\pi \,\omega_{a_l+ \alpha_l x}\,R \right]\,, 
\end{equation}

\vspace{10pt}

\noindent
where $ L, R \in [K_l,  K_l+ 1000 \,\nu(K_l,\epsilon)]$ say, as well as functions of the same form, but with $\omega_{a_l+\alpha_l x}$ possibly replaced  by $1-\omega_{a_l+\alpha_l x}$, or $\omega_{1,a_l+ \alpha_l x}$, or $1-\omega_{1,a_l+\alpha_l x}$, plus an error term bounded by $O(\Lambda K^{-2} \epsilon)$; the length of the linear combination is $O(\Lambda)$ terms, and the coefficients in the linear combination can all be computed efficiently. We remark that, using the notation and terminology of~\cite{Hi}, the functions (\ref{eq:remform1}) arise from bulk terms like $J(K,j;M,\omega_{a_l+\alpha_l x},b_l)$, while the functions (\ref{eq:remform2}) arise from boundary terms like $\tilde{I}_{C_7}(K,j;\omega_{a_l+\alpha_l x},b_l)$. 

By choice of $[w,z)$, it is straightforward to see there are two numbers $\lambda:=\lambda(a_l,\alpha_l)$ and $\lambda':=\lambda'(a_l,\alpha_l)$, which can be computed quickly, such that $\omega_{a_l+ \alpha_l x} =\lambda+ \alpha_l x$ and $\omega_{1,a_l+\alpha_l x}=\lambda'-\alpha x$, for all $x\in [w,z)$ (notice $0\le \lambda+\alpha_l x \le 1$ and $0\le \lambda'-\alpha_l x\le 1$ over $x\in [w,z)$). Substituting $\lambda+\alpha_l x$ for $\omega_{a_l+\alpha_l x}$ we see that the functions (\ref{eq:remform2}) and (\ref{eq:remform2}) can be expressed explicitly in terms of $x$. It is plain such substitutions extend completely similarly if, instead of $\omega_{a_l+\alpha_l x}$, the functions (\ref{eq:remform2}) and (\ref{eq:remform3}) involve  $\omega_{1,a_l+\alpha_l x}$, or $1-\omega_{a_l+\alpha_l x}$, or $1-\omega_{1,a_l+\alpha_l x}$.

Therefore, in order to enable an efficient computation of (\ref{eq:rrmm12}), it suffices to show how to efficiently compute the expressions arising from replacing $R(K_l,a_l+ \alpha_l\,x,b_l)$ in (\ref{eq:rrmm12}) by any of the functions in (\ref{eq:remform1}), (\ref{eq:remform2}), or (\ref{eq:remform3}). 

Substituting any of the functions (\ref{eq:remform1}) and (\ref{eq:remform3}) for $R(K_l,a_l+ \alpha_l\,x,b_l)$ leads to integrals that can be computed efficiently by a direct application of lemma~\ref{lem:la6}, provided one appeals to the set of observations (\ref{eq:obs001}), and the fact $0\le \omega_{a_l+ \alpha_l x} =\lambda_1+ \alpha_l x\le 1$ over $x\in [w,z)$. In fact, lemma~\ref{lem:la6} can handle substantially more general integrals than those arising from (\ref{eq:remform1}) and (\ref{eq:remform3}), and can be generalized yet more. 

As for the functions (\ref{eq:remform2}), they produce somewhat more complicated expressions, involving a triple integral:

\begin{equation} \label{eq:genhif2}
\begin{split}
\frac{D_l}{N^j}\int_{0}^{N}  y^j e^{2\pi i c y^3} \int_w^z & e^{2\pi i Q \lambda \alpha_l x-2\pi (1-i)r\frac{\lambda+\alpha_l x}{\sqrt{2b_l}} -2 \pi i y x-2\pi i \alpha_{1,l} x- 2 \pi i \alpha_{2,l} x^2} \, \times \\
&\int_0^1 t^d e^{- 2\pi (1-i)\frac{\lambda +\alpha_l x}{\sqrt{2b_l}}\,t-2\pi r t}\,dt \, dx\, dy\,,
\end{split}
\end{equation}

\noindent
which are not of the type immediately handled by lemma~\ref{lem:la6}. Nevertheless, expression (\ref{eq:genhif2}) can still be evaluated efficiently via that lemma. For one can first apply the change of variable $x\leftarrow \lambda+\alpha_l x$ to (\ref{eq:genhif2}), so the interval of integration with respect to $x$ is transformed to $[w_1,z_1]$, where $w_1:=w_{1,\lambda,\alpha_l,w}= \lambda+\alpha_l w$ and $z_1:=z_{1,\lambda,\alpha_l,w}=\lambda+\alpha_l z$. One then considers the following two cases. 

On the one hand, if $w_1>\Lambda\sqrt{2b_l}$ say, so $\sqrt{2b_l}/w_1< 1/\Lambda$, we evaluate the integral with respect to $t$ explicitly, which leads to a polynomial in $\sqrt{2b_l}/x$ of degree $d$. We then make the change of variable $x\leftarrow 2x/w_1$, which transforms the interval of integration with respect to $x$ to $[2,2z_1/w_1]$. Observing $z_1/w_1=O(\sqrt{K_l})$, one can divide the interval of integration with respect to $x$ into $O(\log K_l)$ consecutive subintervals of the form $[A_n,A_n+\Delta_n)$, where $2\le A_n<2z_1/w_1$ and $\Delta_n= \lfloor A_n/2\rfloor$, except in the final subinterval where $\Delta_n$ is possibly smaller. In any case, we always have $\Delta_n < A_n/2$. So now, the change of variable $x\leftarrow x-A_n$, followed by an application of Taylor expansions to the new term $\sqrt{2b_l}/(w_1(x+A_n))$, where now $0\le x\le \Delta_n\le A_n/2$, can be used to write $\sqrt{2b_l}/(A_nw_1(1+x/A_n))$ as a polynomial in $x/A_n$ of degree bounded by $O(\nu)$, plus an error of size $O(\epsilon/K^2)$ say. Together, this procedure yields a linear combination, with quickly computable coefficients each of size $O(1)$, of $O(\Lambda)$ integrals of the type directly handled by lemma~\ref{lem:la6}. 

On the other hand, if $w_1 \le \Lambda\sqrt{2b_l}$, then one separately deals with the integral over $x\in [\Lambda\sqrt{2b_l}, z_1]$ as was just described,  while over $x\in [w_1,\Lambda\sqrt{2b_l}]$  one expresses the cross-term $e^{-2\pi (1- i) x t/\sqrt{2b_l}}$, which was obtained after our very first change of variable $x\leftarrow \lambda+\alpha_l x$, as a polynomial in $x$ of degree $O(\nu)$, with coefficients depending on $t$, plus an error of size $O(\epsilon/K^2)$. Specifically, we apply the preliminary change of variable $t\leftarrow \lceil \Lambda^2\rceil t$ say, then divide the interval of integration with respect to $t$ into $\lceil \Lambda^2\rceil$ consecutive subintervals $[n,n+1)$. Over each such subinterval we apply the change of variable $t\leftarrow t-n$. Last, by a routine application of Taylor expansions, followed by integrating explicitly with respect to $t$, we are led to a linear combination, with quickly computable coefficients each of size $O(1)$, of $O(\Lambda^2)$ integrals of the type directly handled by lemma~\ref{lem:la6}. 

To summarize, let $M:=K_{m_0}$, $a:=a_{m_0}$, $D:=D_{m_0}$, $b:=b_{m_0}$, $\alpha:=\alpha_{m_0}$, $\alpha_1:=\alpha_{1,m_0}$, and $\alpha_2:=\alpha_{2,m_0}$. Notice $N:=\alpha_0 K$ and $\alpha:= \alpha_0/|2b_0\,2b_1\,\ldots \,2b_{m_0}|$ by definition, and $\Lambda \le M\le |2b_0\,2b_1\,\ldots\, 2b_{m_0}| K$ by construction. From this it follows $\Lambda \le M\le N/\alpha$. Also, we may assume $m_0> 0$, otherwise condition C3 fails before entering the second phase, in which case the question is reduced, via the Euler-Maclaurin summation formula, to computing an integral of the form (\ref{eq:emcinttt}), which can be done efficiently, as described earlier. Thus, our task has been reduced to evaluating the expression: 

\begin{equation} \label{eq:2e1}
\begin{split}
\frac{D}{N^j} \int_{0}^{N} y^j \exp(2\pi i c y^3) \int_{-1/4}^{1/4}  & \exp(-2\pi i y x -2 \pi i \alpha_1 x-2\pi i \alpha_2 x^2)\, \times  \\
& F(M; a+ \alpha  x, b) \, dx\, dy\,, 
\end{split}
\end{equation}

\noindent
where $a\in [0,1)$, $b\in [0,1)$, and by (\ref{eq:assump1}), (\ref{eq:obs001}), the remarks preceding (\ref{eq:2e1}), as well as the bound  $1/K^3 \le |\alpha_2|$, which is easy to show, we have:

\begin{equation} \label{eq:assump2}
\begin{split}
& \Lambda \le N \le \Lambda K^{\mu} \,, \qquad \Lambda \le M\le N/\alpha \,, \qquad 1/\Lambda\le c N^2 \le 2/\Lambda\,,\\ 
&  1/\Lambda \le \alpha \le N/M\,, \qquad 0\le  \alpha_1 \le 4\alpha \,, \qquad 1/K^3 \le |\alpha_2| \le \alpha \,.
\end{split}
\end{equation}

\subsection{Some saddle-point calculations}

In this subsection, we extract the saddle-point contribution associated with  the terms of $F(M; a+ \alpha  x, b)$ in the double integral (\ref{eq:2e1}). There are about $M$ saddle-points.

Since $\alpha_2$ can assume values in a symmetric interval about zero, then without loss of generality we may replace $\alpha_2$ by $-\alpha_2$. Also, let us drop the constant $D$ in front of  (\ref{eq:2e1}) since it is bounded by $\sqrt{K}$, and the methods we present permit the evaluation of (\ref{eq:2e1}) to within $\pm\, K^{-d}$ for any fixed $d>0$ anyway. So the expression we wish to compute can be written explicitly as:

\begin{equation} \label{eq:3e1}
\begin{split}
\frac{1}{N^j} \int_{0}^{N} y^j \exp(2\pi i c y^3) \int_{-1/4}^{1/4} & \exp(2 \pi i \alpha_{2} x^2-2 \pi i \alpha_{1} x-2\pi i y x) \times   \\
&\sum_{k=0}^M \exp(2\pi i (a+\alpha x)k+2\pi i b k^2) \, dx\, dy\,.
\end{split}
\end{equation}

We split the sum over $k$ in (\ref{eq:3e1}) into three subsums (some of which possibly empty): a bulk subsum consisting of the terms \mbox{$\lfloor \Lambda^2\rfloor< k< M-\lfloor \Lambda^2\rfloor$}, and two tail subsums consisting of the terms $0\le k\le \lfloor \Lambda^2\rfloor$ and \mbox{$M-\lfloor \Lambda^2\rfloor \le k\le M$}. By a direct application of lemma~\ref{lem:la6}, each term in the tail subsums can be computed efficiently (each such term contains at most one saddle-point with respect to $y$ and there are only $O(\Lambda^2)$ terms). We remark the reason we single out the tail subsums is technical and it is to simplify the proof of lemma~\ref{lem:la7} and the calculation of integrals (\ref{eq:ikjj10}) later.

Therefore, we only need to deal with the bulk subsum. The domain of integration with respect to $x$ for each term in the bulk can be extended to $(-\infty,\infty)$ because by a direct application of lemma~\ref{lem:la7} the sum of the integrals over the extra pieces $x\in (-\infty,-1/4)$ and $x\in (1/4,\infty)$ can be computed efficiently (because over these pieces, the integral with respect to $y$ contains no saddle-points, so the expression can be reduced to quadratic exponential sums). Each term in the bulk sum thus becomes

\begin{equation}  \label{eq:saddle1}
\begin{split}
&\frac{1}{N^j} \int_{0}^{N} y^j  \exp(2\pi i  c y^3)\int_{-\infty}^{\infty} \exp\left(2\pi i \alpha k x-2\pi i xy-2\pi  i \alpha_1 x+2\pi i \alpha_2 x^2\right)  \, dx\, dy  \\
&= \frac{\exp\left(\mathop{\mathrm{sign}}(\alpha_2) \frac{i\pi}{4}\right)}{\sqrt{2|\alpha_2|}N^j} \int_{0}^{N}y^j \exp\left(2\pi i f_k(y) \right)\, dy\,, 
\end{split}
\end{equation}

\noindent
where $\mathop{\mathrm{sign}}(x):=x/|x|$ for $x\ne 0$, and

\begin{equation}
f_k(y):=f_k(y,c,\alpha,\alpha_1,\alpha_2)= c y^3-\frac{y^2}{4\alpha_2}+\frac{\alpha k- \alpha_1}{2 \alpha_2}y - \frac{(\alpha k- \alpha_1)^2}{4 \alpha_2 }\,,
\end{equation}

\noindent
and we used the easily-provable formula

\begin{equation}
\int_{-\infty}^{\infty} e^{2\pi i x t+2\pi i y t^2} \,dt= \frac{e^{\mathop{\mathrm{sign}}(y)\pi i /4}}{\sqrt{2|y|}}e^{-2\pi i x^2/(4y)},\qquad x, y\in \mathbb{R}, y\ne 0\,, 
\end{equation}

\noindent
We want to extract the saddle point contribution from (\ref{eq:saddle1}). To this end, define the saddle-points

\begin{equation} 
y_k:=y_k(c,\alpha,\alpha_1,\alpha_2)=\frac{1}{12c\alpha_2} \left(1-\sqrt{1-24c\alpha_2(\alpha k- \alpha_1)}\right)\,. 
\end{equation}

\noindent
Notice by choice of $y_k$, we have $f'_k(y_k)=0$. In what follows, it might be helpful to keep in mind the bound

\begin{equation}
|24c\alpha_2(\alpha k- \alpha_1)| \le \frac{48}{M \Lambda}\,,
\end{equation}

\noindent
which follows from assumptions (\ref{eq:assump2}). The integral (\ref{eq:saddle1}) can be written as

\begin{equation}
\frac{\exp\left(\mathop{\mathrm{sign}}(\alpha_2) \frac{i\pi}{4}\right)}{\sqrt{2|\alpha_2|}N^j}  \exp\left(2\pi i f_k(y_k)\right) \int_{0}^{N} y^j \exp\left(2\pi i h_k(y-y_k)\right)\, dy \,, 
\end{equation}
\begin{equation} 
h_k(y):=h_k(y,c,\alpha_2)= c y^3 +\left(3cy_k-\frac{1}{4\alpha_2} \right)y^2 \,. 
\end{equation}

\noindent
By elementary algebraic manipulations and Taylor expansions, we obtain

\begin{equation} \label{eq:ssum}
\begin{split}
f_k(y_k)&=  \frac{1}{864 c^2\alpha_2^3}\left( \left(1- 24c\alpha_2(\alpha k-\alpha_1)\right)^{3/2}- 1+ 36c\alpha_2(\alpha k-\alpha_1)\right.  \\
&\left.- 216 c^2 \alpha_2^2 (\alpha k- \alpha_1)^2  \right)  \\
&=:\sum_{s=0}^{\infty}d_s k^{s}\,,
\end{split}
\end{equation}

\noindent
where

\begin{equation} \label{eq:altform}
\begin{split}
&d_s:=d_{s,c,\alpha,\alpha_1,\alpha_2}= (24)^s c^{s-2} \alpha_2^{s-3} \alpha^{s} q_s\,,\\
& q_s :=q_{s,c,\alpha_1,\alpha_2}= \sum_{l=0}^{\infty}  \binom{l+s}{s}g_{l+s} (-1)^l (24)^{l}  c^{l}\alpha_2^{l}\alpha_1^l  \,,
\end{split}
\end{equation}

\noindent
and where  $g_0=0$, $g_1 = 0$, $g_2 = 0$, $|g_l| \le 1$, and $g_l$ depends on $l$ only. Notice since $|q_s| \le 2^{s+1}$, $|\alpha_2|\le \alpha$,  $cN^2 \le 1/\Lambda$, and $\alpha k\le N$, it follows

\begin{equation} \label{eq:bdd}
\left|d_{s+3} M^{s+3}\right| \le 2(48)^3 c \alpha^3 k^3  (48)^s c^s \alpha^{2s} M^s < \frac{N}{4 M^s}\qquad  \textrm{for  $s\ge 0$}\,.
\end{equation}

\noindent
Also, each $q_s$, hence $d_s$, can be computed efficiently. Now define

\begin{equation}\label{eq:ikjsaddle}
I_{k,j}:=I_{k,j,N,c,\alpha,\alpha_1,\alpha_2}=\frac{1}{\sqrt{2|\alpha_2|}N^j}\int_{0}^{N} y^j \exp\left(2\pi i  h_k(y-y_k)\right)\, dy\,.  
\end{equation}

\noindent
Then the sum (\ref{eq:3e1}) has been reduced to  a sum of the form

\begin{equation} \label{eq:contr}
\sum_{k=\lfloor \Lambda^2\rfloor}^{M-\lfloor \Lambda^2\rfloor} \exp\left(2\pi i a k+2\pi i b k^2+2\pi i f_k(y_k)\right) I_{k,j}\,. 
\end{equation}

By our assumptions on $N$, $M$, $c$, $\alpha$, $\alpha_1$, and $\alpha_2$, and the bound $\lfloor \Lambda^2 \rfloor\le  k\le M-\lfloor \Lambda^2\rfloor$, we have $\alpha(\lfloor \Lambda^2\rfloor -6) \le  y_k \le N- \alpha(\lfloor \Lambda^2\rfloor+6)$, and so $0<y_k<N$. So let us consider the integral (\ref{eq:ikjsaddle}) over the subintervals $[0,y_k)$ and $[y_k,N]$ separately. We use Cauchy's theorem to replace the contour $\{y\,:\,0\le y<y_k\}$, for instance, with $\{ye^{\pi i/4}\,:\,0\le y< \sqrt{2}y_k\}$ and $\{y+iy_k\,:\,0\le y<y_k\}$ (or their conjugates, appropriately oriented, depending on whether $\alpha_2$ is negative or positive, respectively).  Taking into account the easily-deducible facts $h_k(-y_k)= -(\alpha k-\alpha_1)^2/(4\alpha_2)-f_k(y_k)$ and $h_k(N-y_k)=f_k(N)-f_k(y_k)$, combined with suitable applications of Taylor expansions, one finds

\begin{equation}\label{eq:ikjj10}
\begin{split}
I_{k,j}&=\sum_{l=0}^L z_l \frac{k^l}{M^l}+e^{2\pi i a_1 k+2\pi i b_1 k^2-2\pi i f_k(y_k)} \sum_{l=0}^L w_l \frac{k^l}{M^l}   \\
&+e^{2\pi i a_1 k+2\pi i b_1 k^2-2\pi i f_k(y_k)} \sum_{l=0}^L \tilde{w}_l \frac{1}{k^l} +e^{2\pi i a_2 k+2\pi i b_1 k^2-2\pi i f_k(y_k)} \sum_{l=0}^L v_l \frac{k^l}{M^l}  \\
&+e^{2\pi i a_2 k+2\pi i b_1 k^2-2\pi i f_k(y_k)} \sum_{l=0}^L \tilde{v}_l \frac{1}{k^l} +O(\Lambda K^{-2} \epsilon)\,, 
\end{split}
\end{equation}

\noindent
where $L=O(\nu)$, and $a_1:=a_{1,\alpha,\alpha_1,\alpha_2}$, $a_2:=a_{2,N,\alpha,\alpha_1,\alpha_2}$, and $b_1:=b_{1,\alpha,\alpha_2}$, are real numbers of size $O(K^3)$ that are quickly computable. The coefficients $z_l:=z_{l,N,c,\alpha,\alpha_1,\alpha_2}$, $w_l:=w_{l,N,c,\alpha,\alpha_1,\alpha_2}$, and $v_l:=v_{l,N,c,\alpha,\alpha_1,\alpha_2}$ are quickly computable and bounded by $O(1)$. And the coefficients $\tilde{w}_l:=\tilde{w}_{l,N,c,\alpha,\alpha_1,\alpha_2}$ and $\tilde{v}_l:=\tilde{v}_{l,N,c,\alpha,\alpha_1,\alpha_2}$ are of size $O(\Lambda^l)$ and are also quickly computable. In particular, on substituting expression (\ref{eq:ikjj10}) back into (\ref{eq:contr}), we see the sum (\ref{eq:contr}) is equal to

\begin{equation} \label{eq:finsum}
\sum_{l=0}^L \frac{z_l}{M^l}\sum_{k=\lfloor \Lambda^2\rfloor}^{M-\lfloor \Lambda^2\rfloor} k^l \exp(2\pi i a k+2\pi i b k^2+2\pi i f_k(y_k))\,, 
\end{equation}

\noindent
plus a linear combination, with quickly computable coefficients, of either $O(\nu)$ quadratic sums of length $\le M$ terms, or $O(\nu)$ sums of the type discussed in \textsection{5} of~\cite{Hi} of length $\le M$ terms (the latter also reduce to usual quadratic sums; see~\cite{Hi}). And all such sums can be computed efficiently via Theorem~\ref{quadraticthm}.  

Let $S:=3+ \left\lfloor \log N/\log M\right\rfloor$. Then by the Taylor expansion of $f_k(y_k)$ as a polynomial in $k$, given in (\ref{eq:ssum}) and (\ref{eq:altform}), we have

\begin{equation}
f_k(y_k)=\sum_{s=0}^S d_s k^s+ \sum_{s=S+1}^{\infty} d_s k^s\,.
\end{equation}

\noindent
Also, by estimate (\ref{eq:bdd}) we have $|d_s M^s|\le N/M^{s-3}$. Since $0\le k\le M$, the tail $\sum_{s=S+1}^{\infty} d_sk^s=O(N/M^{S-2})=O(1)$. So the tail can be routinely eliminated from the exponent in (\ref{eq:finsum}) via Taylor expansions; see \textsection{2} for a similar calculation. This transforms (\ref{eq:finsum}) into a linear combination, with quickly computable coefficients each of size $O(1)$, of $O(\nu^2)$ sums of the form

\begin{equation} \label{eq:finsum1}
\frac{1}{M^l}\sum_{k=0}^{M} k^l \exp(2\pi i \beta_1 k+2\pi i \beta_2 k^2+\ldots+2\pi i \beta_S k^S)\,, 
\end{equation}

\noindent
where by (\ref{eq:ssum}), (\ref{eq:altform}), (\ref{eq:bdd}), and the periodicity of the complex exponential, we have

\begin{equation} \label{eq:fincond1}
\begin{split}
&0\le l =O(\nu)\,, \quad \Lambda\le M \le \Lambda N \le \Lambda^2 K^{\mu}\,, \\
&S:=3+ \left\lfloor \log N/\log M\right\rfloor\,,\quad |\beta_s| \le \min\left\{ \frac{1}{2}, \frac{N}{4M^{2s-3}}\right\}\,.
\end{split}
\end{equation} 

\noindent
Notice that, for simplicity, we extended the range of summation in (\ref{eq:finsum1}) to include the tails $0\le k<\lfloor \Lambda^2\rfloor$ and $M-\lfloor \Lambda^2\rfloor<k\le M$ since these extra subsums can be computed efficiently (they involve $O(\Lambda^2)$ terms only). 

\subsection{The FFT precomputation}

We will show any sum of the form  (\ref{eq:finsum1}) satisfying conditions (\ref{eq:fincond1}) can be computed efficiently provided we perform a precomputation costing $\le 16 \Lambda^5 N^4$ operations on numbers of $O(\nu^2)$ bits, and requiring $\le 16 \Lambda^5 N^4$ bits of storage. (Notice by conditions (\ref{eq:fincond1}) we have $N\le \Lambda K^{\mu}$, and so $N^4\le \Lambda^4 K^{4\mu}$.) 

More specifically, our plan is to precompute the sum (\ref{eq:finsum1}) for values of its arguments specified by conditions (\ref{eq:fincond1}) on a dense enough grid of points so its evaluation elsewhere can be done quickly. To this end, let 

\begin{equation}\label{eq:rndef}
n:=n(M)=\lfloor \log_2 M\rfloor\,,\quad R:=R(N,\Lambda)=\lceil \log_2 (\Lambda N+1)\rceil\,.
\end{equation}

\noindent
Notice by the bounds on $N$ and $M$ in (\ref{eq:fincond1}), we have $0<n<R$. Rather than dealing with (\ref{eq:finsum1}), we deal with the following more general sum:

\begin{equation}\label{eq:finnsumm} 
F_{M,l,q}(\tilde{M}; \tilde{\beta}_1,\ldots,\tilde{\beta}_{\tilde{S}}):=\frac{1 }{M^l}\sum_{k=0}^{\tilde{M}} (q+k)^l \exp(2\pi i\tilde{\beta}_1 k+\ldots+2\pi i \tilde{\beta}_{\tilde{S}} k^{\tilde{S}})\,,  
\end{equation}

\noindent
where $q$ and $\tilde{M}$ are non-negative integers, the $\tilde{\beta}$'s are real numbers, and

\begin{equation}\label{eq:finncon} 
q+\tilde{M}\le M\,,\qquad \tilde{S}= 3+ \left\lfloor R/n \right\rfloor, \qquad |\tilde{\beta}_s|\le \min\left\{1/2, 2^{R-2} 2^{(3-2s)n}\right\}\,, 
\end{equation}

\noindent
Since $S\le \tilde{S}$, and $N/(4M^{2s-3})\le 2^{R-2}2^{(3-2s)n}$, the sum (\ref{eq:finnsumm}), with conditions (\ref{eq:finncon}), is indeed more general than the sum (\ref{eq:finsum1}), with conditions (\ref{eq:fincond1}).  For example, (\ref{eq:finsum1}) can be written as 

\begin{equation}\label{eq:exstart}
\frac{1}{M^l}\sum_{k=0}^{M} k^l \exp(2\pi i \beta_1 k+\ldots+2\pi i \beta_S k^S) =F_{M,l,0}(M;\beta_1,\ldots,\beta_S,0,\ldots,0)\,,
\end{equation}

\noindent
where we padded $\tilde{S}-S$ zeros at the end. We now carry out a ``dyadic approximation'' of the sum (\ref{eq:exstart}). With this in mind, let $q_0 := 0$, $M_0:=M$, $n_0:=n$, and $\tilde{\beta}^{(0)}_s:=\tilde{\beta}_s$, for $1\le s\le \tilde{S}$.  Then for integers $d\ge 0$, and for as long as $M_{d+1}>  1$, sequentially define

\begin{equation}\label{eq:finncon111}
\begin{split}
M_{d+1}&:=M_d-2^{n_d}\,,\qquad  n_{d+1}:=\lfloor \log_2 M_{d+1} \rfloor\,,\\
q_{d+1}&:=q_d+2^{n_d}\,, \qquad  \tilde{\beta}_s^{(d+1)}  = \sum_{p=s}^{\tilde{S}} \binom{p}{s} 2^{n_d(p-s)} \tilde{\beta}_p^{(d)}\,. 
\end{split}
\end{equation}

\noindent
Notice $M_d+q_d=M_0+q_0\le M$, $M_{d+1}<M_d/2$, $d< n$,  and $n_d\le n-d$. And we have

\begin{equation} \label{eq:setupit}
\begin{split}
 F_{M,l,q_d}(M_d; \tilde{\beta}_1^{(d)},\ldots,\tilde{\beta}_{\tilde{S}}^{(d)}) &= F_{M,l,q_d}(2^{n_d}-1; \tilde{\beta}^{(d)}_1,\ldots,\tilde{\beta}^{(d)}_{\tilde{S}})\\
&+ c_d \,F_{M,l,q_{d+1}}(M_{d+1};\tilde{\beta}_1^{(d+1)},\ldots,\tilde{\beta}_{\tilde{S}}^{(d+1)})\,,
\end{split}
\end{equation}

\noindent
where $c_d$ satisfies $|c_d|=1$, and $M_d$, $q_d$, $c_d$, $n_d$, $\tilde{\beta}^{(d)}$, and $\tilde{\beta}^{(d+1)}$, can all be computed efficiently. By iterating (\ref{eq:setupit}) at most $n$ times, the evaluation of (\ref{eq:exstart}) can be reduced  to numerically evaluating at most $n$ functions of the form 

\begin{equation}\label{eq:fmlqd}
F_{M,l,q_d}(2^{n_d}-1; \tilde{\beta}^{(d)}_1,\ldots,\tilde{\beta}^{(d)}_{\tilde{S}})\,.
\end{equation}

\noindent
Since $n=O(\nu)$, it suffices to show how to deal with each such function. To do so, we will need an upper bound on the size of the coefficients $\beta_s^{(d)}$. By induction on $d$, suppose the inequality

\begin{equation} \label{eq:indcon1}
|\tilde{\beta}_s^{(d)}|\le 2^{R-2} 2^{(3-2s)n}2^{sd} (1+1/(2R))^d \,,
\end{equation}

\noindent
holds for $3\le s\le \tilde{S}$ (notice by the third condition in (\ref{eq:finncon}) that  (\ref{eq:indcon1}) is satisfied for $d=0$ and all $3\le s\le \tilde{S}$).  Then by the recurrence for $\tilde{\beta}^{(d+1)}$ in (\ref{eq:finncon111}), and the estimate $\binom{p+s}{p}\le 2^{p+s}$, we obtain

\begin{equation}\label{eq:betaest}
|\beta_s^{(d+1)}| \le    2^{R-2} 2^{(3-2s)n}2^{s(d+1)} (1+1/(2R))^d \sum_{p=0}^{\tilde{S}-s} 2^p 2^{pn_d} 2^{-2np}2^{dp} \,.
\end{equation}

\noindent
As remarked earlier, $n_d\le n-d$, so if $n\ge \log_2 R+ 4$ say, then

\begin{equation}\label{eq:auxestr2}
\sum_{p=0}^{\tilde{S}-s} 2^p 2^{pn_d} 2^{-2np}2^{dp} \le \sum_{p=0}^{\tilde{S}} 2^{-(n-1)p} \le 1+1/(2R)\,.
\end{equation}

\noindent
Substituting (\ref{eq:auxestr2}) back into (\ref{eq:betaest}) shows estimate (\ref{eq:indcon1}) holds for $\beta_s^{(d+1)}$, as claimed. Moreover, if $n< \log_2 R+4$, then $M\le 32R=O(\nu)$, so should this happen the sum (\ref{eq:finnsumm}) can be evaluated directly anyway. By (\ref{eq:indcon1}), the fact $(1+1/(2R))^d\le 2$, and using similar calculations to those in \textsection{2} (while describing Sch\"onhage's method), one can employ Taylor expansions to reduce the evaluation of  (\ref{eq:fmlqd}) to that of precomputing the sums:

\begin{equation}\label{eq:precomsum}
\frac{1}{2^{(n-d)l}}\sum_{k=0}^{2^{(n-d)}-1} k^l \exp\left(2\pi i\frac{\tilde{\sigma}_{n,1}^{(d)}}{2^{(n-d)}} k+2\pi i \frac{\tilde{\sigma}_{n,2}^{(d)}}{2^{2(n-d)}} k^2+\ldots+2\pi i \frac{\tilde{\sigma}_{n,\tilde{S}}^{(d)}}{2^{\tilde{S} (n-d)}} k^{\tilde{S}}\right)\,, 
\end{equation}

\noindent
for all integers $1\le R <\lceil \log_2(\Lambda^2 K^{\mu}+1)\rceil$, $1\le n<R$, $0\le d<n$, $0\le l=O(\tilde{S}\,\nu)$, and all integers $\tilde{\sigma}_{n,s}^{(d)}$ satisfying

\begin{equation}\label{eq:precomcon}
|\tilde{\sigma}_{n,s}^{(d)}|\le \min\left\{2^{s(n-d)-1},2^{R-1} 2^{(3-s)n}\right\} \,, 
\end{equation}

\noindent
Once the sums (\ref{eq:precomsum}) are precomputed for all such values, the sum (\ref{eq:finsum1}), with conditions (\ref{eq:fincond1}), can be evaluated efficiently using Taylor expansions. Alternatively, one can use band-limited interpolation techniques, which is probably more practical; see~\cite{Od}. 

Now, the sum (\ref{eq:precomsum}) is the discrete Fourier transform, evaluated at $-\tilde{\sigma}_{n,1}^{(d)}$, of the sequence of points 

\begin{equation}
\frac{k^l}{2^{(n-d)l}}\,\exp\left(2\pi i \frac{\tilde{\sigma}_{n,2}^{(d)}}{2^{2(n-d)}} k^2+\ldots+2\pi i \frac{\tilde{\sigma}_{n,\tilde{S}}^{(d)}}{2^{\tilde{S} (n-d)}} k^{\tilde{S}}\right)\,,\quad 0\le k<2^{(n-d)}\,.
\end{equation}

\noindent
So, given $l$, $n$, and $d$, (\ref{eq:precomsum}) can be computed at all the (integer) values of $\tilde{\sigma}^{(d)}$ specified in (\ref{eq:precomcon}) using the FFT in at most 

\begin{equation} \label{eq:fft001}
\Lambda\,2^{3(n-d)}\,\prod_{s=3}^{3+\lfloor R/n\rfloor} \min\left\{2^{s(n-d)}, 2^R 2^{(3-s)n} \right\}
\end{equation}

\noindent
operations on numbers of $O(\nu^2)$ bits, and requiring at most as many bits of storage. Since $\min\left\{2^{s(n-d)-1}, 2^{R-1}2^{(3-s)n} \right\} \le \min\left\{ 2^{sn},2^{R+(3-s)n}\right\}$, then (\ref{eq:fft001}) is bounded by 

\begin{equation} \label{eq:fft0001}
\Lambda \prod_{s=0}^{3+\lfloor R/n\rfloor} \min\left\{ 2^{sn},2^{R+(3-s)n}\right\}\,,
\end{equation}

\noindent
We observe $\min\left\{2^{sn}, 2^{R+(3-s)n}\right\}= 2^{sn}$ for\mbox{ $s<\lceil R/2n+3/2 \rceil$}, from which it follows by a fairly straightforward calculation that (\ref{eq:fft0001}) is bounded by 

\begin{equation}
\Lambda \,2^{n(R/(2n)+3/2)^2}\,.
\end{equation}

The function $n(R/(2n)+3/2)^2$ is of size $\le 4R$ exactly when $R/9\le  n\le R$. So, given $n$, $l$, and $d$ such that  $R/9\le  n\le R$, the cost of the FFT precomputation is $\le \Lambda\,2^{4R}$ operations on numbers of $O(\nu^2)$ bits. Since $0\le d< n-1$ and $0\le l=O(\tilde{S}\nu)$, and since by conditions (\ref{eq:fincond1}) and definitions (\ref{eq:rndef}) we have $2^R\le 2\Lambda^2 K^{\mu}$, then the total cost of the precomputation (for all possible values of $l$, $d$, and $R/9\le n<R$) is at most $16\ \Lambda^9 K^{4\mu}$ operations on numbers of $O(\nu^2)$ bits. Notice for $R/9\le n\le R$, we have $\tilde{S}\le 12$. So the exponential sum (\ref{eq:precomsum}) will have have degree $\le 12$ over that range of $n$. 

It remains to consider the case $n<R/9$. This implies $M \le \nu N^{1/9}$. So $M$ is small compared to $N$, and the convergence of the Taylor series (\ref{eq:ssum}) from \textsection{2} is slower, which leads to higher degree exponential sums (\ref{eq:finsum1}). 

By the definitions (\ref{eq:rndef}) of $n$ and $R$, and the condition $2^R \le 2\Lambda^2 K^{\mu}$, we have if $n< R/9$, the length of the sum (\ref{eq:finsum1}) is at most $2^{n+1}\le 2^{R/9+1}\le  4\Lambda K^{\mu/9}$ terms. Since this is a relatively short length, one option is to directly evaluate (\ref{eq:finsum1}) in such cases. If we do so, however,  a simple optimization procedure reveals the resulting algorithm to compute $\zeta(1/2+it)$ has complexity $t^{37/117+o(1)}$ only (notice $37/117\approx 0.316\ldots$). We would like to avoid direct computation of these sums in order to achieve the $t^{4/13+o(1)}$ complexity. 

To this end, observe the sum (\ref{eq:finsum1}) can be viewed in the following alternative light.  Recall by Taylor series (\ref{eq:ssum}) we have  $\beta_s=d_s$ for $s \ge 3$, where $d_s$ is defined as in (\ref{eq:altform}). So the coefficients $\beta_{s+3}$, for $s\ge 0$, can be expressed in the form

\begin{equation}\label{eq:relations222}
\beta_{s+3}= \tau \eta_s\,,\qquad \eta_s:=\eta_{s,\rho,\gamma}= \rho^s \sum_{l=0}^{\infty} z_{l,s} \gamma^l\,,
\end{equation}

\noindent
where the numbers $z_{l,s}$ are quickly computable, depend only on $l$ and $s$, satisfy $|z_{l,s}| \le 1/2$, and 

\begin{equation} \label{eq:relations111} 
\tau :=\tau_{c,\alpha}=  2 (48)^3 c \alpha^3\,, \quad \rho:=\rho_{c,\alpha,\alpha_2}= 2 c \alpha \alpha_2\,, \quad \gamma:=\gamma_{c,\alpha_1,\alpha_2}=48 c\alpha_1 \alpha_2\,.
\end{equation}

\noindent
(Notice $\tau$, $\rho$, and $\gamma$ are real numbers.) Therefore, by the bounds on $c$, $\alpha$, $\alpha_1$, and $\alpha_2$, specified in (\ref{eq:assump2}), as well as the bound $|z_{l,s}|\le 1/2$, we have

\begin{equation} \label{eq:fincond2}
|\tau| \le \frac{N}{M^3}\,, \qquad |\rho| \le  \frac{1}{M^2}\,, \qquad |\gamma|\le \frac{1}{M^2}\,. 
\end{equation}

\noindent
The infinite series defining $\eta_s$ in (\ref{eq:relations222}) converges rapidly, and only $O(\nu)$ terms are needed to ensure its calculation to within $O(\epsilon/K^2)$ say. Since each of $z_{l,s}$, $\rho$, and $\gamma$ can be computed quickly, so can $\eta_s$.  So the sum (\ref{eq:finsum1}) may now be formulated as a function $W(M,l,S; \beta_1,\beta_2,\beta_3, \tau,\rho,\gamma)$ given by 

\begin{equation}\label{eq:finsum2}
\begin{split}
W(M,l,S; \beta_1,\beta_2,\beta_3, \tau,\rho,\gamma):= \frac{1}{M^l}\sum_{k=0}^{M} k^l \exp(& 2\pi i \beta_1 k+2\pi i \beta_2 k^2+ 2\pi i \beta_3 k^3\\
&+2\pi i \tau\eta_1 k^4+\ldots + 2\pi i \tau \eta_{S-3} k^S ) \,,
\end{split} 
\end{equation}

\noindent
In particular, once $\beta_1$, $\beta_2$, $\beta_3$, $\tau$, $\rho$, and $\gamma$, are determined, so is the value of the original sum (\ref{eq:finsum1}). 

Presenting the sum (\ref{eq:finsum1}) in the form $W(M,l,S; \beta_1,\beta_2,\beta_3, \tau,\rho,\gamma)$  is useful because it considers that the coefficients $\beta_4, \beta_5,\ldots,\beta_S$ in (\ref{eq:finsum1}) are not independent of each other. Thus, the grid points where it is necessary to precompute the sum (\ref{eq:finsum1}) is much sparser than is required in formulation (\ref{eq:precomsum}). This becomes especially important when $M$ is small in comparison to $N$ because this is when $S=3+\lfloor \log N/\log M\rfloor$ is of noticeable size, and so simplifying matters by treating the variables $\beta_4,\ldots,\beta_S$ independently, like we did to arrive at (\ref{eq:precomsum}), becomes quite costly as there are many variables $\beta$. 

Given $\beta_1,\beta_2,\beta_3\in[0,1)$, and $\tau$, $\rho$, and $\gamma$, conforming to conditions (\ref{eq:fincond2}), we obtain by Cauchy's estimate  
applied with circles $\tilde{C}_1$, $\tilde{C}_2$, $\tilde{C}_3$, $\tilde{C}_4$, $\tilde{C}_5$, and $\tilde{C}_6$, going about the origin once with radii $1/(8\pi M)$, $1/(8\pi M^2)$, $1/(8\pi M^3)$, $1/(8\pi M^2)$, $1/(8\pi NM)$, and $M/(8\pi N)$, respectively, that

\begin{equation}\label{eq:wder1}
\begin{split}
\frac{1}{r_1!\,\ldots\,r_6!}\,&\left|\frac{\partial^{r_1}\ldots \partial^{r_6}}{\partial z_1^{r_1}\ldots\partial z_6^{r_6}}\, W(M,l,S;\beta_1+z_1,\ldots,\gamma+z_6)\right|_{z_1=0\,,\,\ldots\,,\,z_6=0} \le\\
&N\,M\,(8\pi M)^{r_1}\, (8\pi M^2)^{r_2}\, (8\pi M^3)^{r_3}\,(8\pi M^2)^{r_4}\, (8\pi MN)^{r_5}\, (8\pi N/M)^{r_6}\,.
\end{split}
\end{equation} 

\noindent
Also, by Taylor expansions we have 

\begin{equation}\label{eq:wexep1}
\begin{split}
W(M,& l,S;\beta_1+z_1,\,\ldots\,,\gamma+z_6)= \sum_{r_1=0}^{\infty}\,\cdots\,\sum_{r_6=0}^{\infty}  \frac{z_1^{r_1}\,\ldots \, z_6^{r_6}}{r_1!\,\ldots\,r_6!}\,\times \\
&\left[\frac{\partial^{r_1}\ldots \partial^{r_6}}{\partial z_1^{r_1}\ldots\partial z_6^{r_6}}\, W(M,l,S;\beta_1+z_1,\ldots,\gamma+z_6)\right]_{z_1=0\,,\,\ldots\,,\,z_6=0}\,. 
\end{split}
\end{equation}

\noindent
Therefore, if we ensure $|z_1|\le 1/(16\pi M)$, $|z_2|\le 1/(16\pi M^2)$, $|z_3|\le 1/(16\pi M^3)$, $|z_4|\le 1/(16\pi M^2)$, $|z_5|\le 1/(16\pi MN)$, and $|z_6|\le M/(16\pi N)$, then by bound (\ref{eq:wder1}) each of the series over $r_1,\ldots,r_6$ in (\ref{eq:wexep1}) can be truncated after $O(\nu)$ terms, which results in a truncation error of size $O(\epsilon/K^2)$ say. So for $z_1,\ldots,z_6$ of such sizes, the value of the perturbed function $W(M,l,S;\beta_1+z_1,\ldots,\gamma+z_6)$ can be recovered efficiently, using expansion (\ref{eq:wexep1}), from the values 

\begin{equation}\label{eq:wpart}
\left[\frac{\partial^{r_1}\,\ldots\,\partial^{r_6}}{\partial z_1^{r_1}\,\ldots\,z_6^{r_6}}\,W(M,l,S;\beta_1+z_1,\,\ldots\,,\gamma+z_6)\right]_{z_1=0\,,\,\ldots\,,\,z_6=0}
\end{equation}

\noindent
for $0\le r_1\,,\, \ldots\,,\, r_6=O(\nu)$, and $0\le l=O(S\nu)$, assuming each such value is known to within $\pm\,\epsilon/K^2$ say. 

We discretize the interval $[0,1)$, which is where $\beta_1$ resides, in step sizes of $1/(16\pi M)$. This ensures any $\beta_1\in[0,1)$ can be expressed as $\beta_1=p_1/(16\pi M)+z_1$, where $0\le p_1 \le 16\pi M$ an integer, and $|z_1|\le 1/(32\pi M)$. This suffices to kill the growth in the derivatives of $W$ with respect to $z_1$, as can be seen from bound (\ref{eq:wder1}).  

As for $\beta_2$ and $\beta_3$, which both also reside in $[0,1)$, we use step sizes of $1/(16\pi M^2)$ and $1/(16\pi M^3)$ respectively. This ensures they can be written as $\beta_2=p_2/(16\pi M^2)+z_2$ and $\beta_3=p_3/(16\pi M^3)+z_3$, where $|z_2|\le 1/(32\pi M^2)$ and $|z_3|\le 1/(32\pi M^3)$. This again suffices to kill the growth in the derivatives of $W$ with respect to $z_2$ and $z_3$. 

Similarly, we discretize the interval $[-N/M^3, N/M^3]$, which is where $\tau$ resides, in steps of $1/(16\pi M^2)$. This gives $\le 32\pi N/M$ relevant discretizations for $\tau$. As for $\rho$, which resides in $[-1/M^2,1/M^2]$, we use a step size of  $1/(16\pi NM)$, which yields $32\pi N/M$ relevant discretizations. Last, in the case of $\gamma$, which is restricted to $[-1/M^2,1/M^2]$, we use a step size of $M/(16\pi N)$, yielding $32\pi N/M^3$ relevant discretizations for it.

It is clear once we obtain the values of the partial derivatives of $W$ in (\ref{eq:wpart}) at the discretized values of $\beta_1$, $\beta_2$, $\beta_3$, $\tau$, $\rho$, and $\gamma$, and for integers $0\le l=O(S\nu)$, and integers $0\le r_1\,,\,\ldots\,,\,r_6=O(\nu)$, then the values of $W$ elsewhere in the region $\beta_1,\beta_2,\beta_3\in[0,1)$, and $\tau$, $\rho$, and $\gamma$, conforming to conditions (\ref{eq:fincond2}), and with $0\le l=O(\nu)$, can be recovered quickly, in $O(\nu^6)$ steps, via (the truncated version of) expansion (\ref{eq:wexep1}). 

Suppose the values of $W(M,l,S;\beta_1,\beta_2,\beta_3,\tau,\rho,\gamma)$ have been precomputed at all the discretized values of $\beta_1$, $\beta_2$, $\beta_3$, $\tau$, $\rho$, and $\gamma$, and for integers $0\le l=O(S\nu)$, to within $\pm\,\epsilon/K^2$ each say. Since the partial derivatives of $W$ in (\ref{eq:wder1}), evaluated at any of these discretizations, are needed up to $r_1=O(\nu)\,,\ldots\,,\, r_6=O(\nu)$  only, they can be calculated via (quite laborious) recursions using $O(\Lambda^3)$ operations on numbers of $O(\nu^2)$ bits (see the proof of lemma~\ref{lem:la3} for an example of such a recursion). 

To conclude, for each $l$, $S$, and $M$, we employ the FFT to precompute the values of $W(M,l,S;\beta_1,\beta_2,\beta_3,\tau,\rho,\gamma)$ at all the discretizations of $\beta_1$, $\beta_2$, $\beta_3$, $\tau$, $\rho$, and $\gamma$, to within $\pm\,\epsilon/K^2$ each say. By the discussion following (\ref{eq:wpart}), there are $\le (16\pi M)^6\,(32\pi N/M)^2\,(32\pi N/M^3)\le(32\pi)^9 N^3 M$ discretizations to consider. So the cost of the FFT precomputation is bounded by $O(\nu^2 M N^3)$ operations on numbers of $O(\nu^2)$ bits. Finally, since $l=O(S\nu)$, $S=O(\nu)$, and, by hypothesis, $M\le \nu N^{1/9}$, there are only $O(\nu^4 N^{1/9})$ permissible tuples $(l,S,M)$. Therefore, the total cost of the FFT precomputation is bounded by $O(\Lambda N^{29/9})$ operations on numbers of $O(\nu^2)$ bits. Since $N\le \Lambda K^{\mu}$, this cost is certainly bounded by $O(\Lambda^5 K^{4\mu})$ operations on numbers of $O(\nu^2)$, which completes our proof of Theorem~\ref{cubicthm}.

\section{Auxiliary results}

\noindent
\textit{Remark.} The conventions stated at the start of \textsection{4} regarding the presentation of certain frequently occurring details apply here as well.

\begin{lem} \label{lem:la1}
Let $B_2=1/6$, $B_4=-1/30$,$\ldots\,$, denote the even Bernoulli numbers. For any $j\ge 0$, any integer $K>\Lambda(K,j,\epsilon)$, any real $c$,  and with $f_{K,j,x,c}(y):=\frac{y^j}{K^{j}}e^{2\pi i c y^3+2\pi i x y}$, we have

\begin{equation}
\begin{split}
\sum_{n=0}^{K} f_{K,j,x,c}(n)&= \int_0^{K} f_{K,j,x,c}(y)\, dy +\frac{1}{2}\left(f_{K,j,x,c}(K)+f_{K,j,x,c}(0)\right) \\
&+ \sum_{m=1}^M \frac{B_{2m}}{(2m)!}\left(f_{K,j,x,c}^{(2m-1)}(K)-f_{K,j,x,c}^{(2m-1)}(0)\right)+E_{M,K,j,x,c}\,. 
\end{split}
\end{equation}

\noindent
where $f_{K,j,x,c}^{(m)}(y)$ denotes the $m^{th}$ derivative of $f_{K,j,x,c}(y)$ with respect to $y$, and 

\begin{equation}\label{eq:emest1}
|E_{M,K,j,x,c}|\le \displaystyle \frac{10}{(2\pi )^{2M}}\displaystyle \int_0^{K} \left| f_{K,j,x,c}^{(2M)}(y)\right| \, dy\,.
\end{equation}
\end{lem}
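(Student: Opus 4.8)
The plan is to recognise the asserted identity as the classical Euler--Maclaurin summation formula applied to the summand $f:=f_{K,j,x,c}$, and then to extract the bound (\ref{eq:emest1}) from the Fourier series of the periodised Bernoulli polynomial. First I would note that $f(y)=\tfrac{y^j}{K^j}e^{2\pi i c y^3+2\pi i x y}$ is $C^\infty$ on $[0,K]$: $y^j$ is a polynomial (with the convention $y^0\equiv1$ when $j=0$), $cy^3+xy$ is a polynomial, and the exponential of a polynomial is smooth; hence every $f^{(m)}$ is continuous on $[0,K]$ and all integrals below converge. The hypothesis $K>\Lambda(K,j,\epsilon)$ is not used here and merely records the ambient assumption of Section~4. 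Fix any integer $M\ge1$.

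The core step is integration by parts on each unit interval $[n,n+1]$, $0\le n\le K-1$, against the Bernoulli polynomials $B_\ell$, using $B_1'(y)=1$ and $B_{\ell+1}'(y)=(\ell+1)B_\ell(y)$. Starting from $\int_n^{n+1}f(y)\,dy$ and integrating by parts $2M$ times, the boundary terms collect as follows: the $B_1$-step contributes $\tfrac12(f(n)+f(n+1))$ since $B_1(1)=\tfrac12,\ B_1(0)=-\tfrac12$; the $B_\ell$-step with $\ell\ge2$ contributes $-\tfrac{B_\ell}{\ell!}\bigl(f^{(\ell-1)}(n+1)-f^{(\ell-1)}(n)\bigr)$ since $B_\ell(1)=B_\ell(0)=B_\ell$; and the steps with $\ell$ odd, $\ell\ge3$, contribute nothing since $B_\ell=0$. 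Tracking signs, one obtains for each $n$
\begin{multline*}
\int_n^{n+1}f\,dy=\tfrac12\bigl(f(n)+f(n+1)\bigr)-\sum_{m=1}^{M}\frac{B_{2m}}{(2m)!}\bigl(f^{(2m-1)}(n+1)-f^{(2m-1)}(n)\bigr)\\
+\frac{1}{(2M)!}\int_n^{n+1}f^{(2M)}(y)\,B_{2M}(y-n)\,dy.
\end{multline*}
Summing over $0\le n\le K-1$: the middle sum telescopes, the remainder integrals assemble into $\tfrac{1}{(2M)!}\int_0^{K}f^{(2M)}(y)\,\overline{B}_{2M}(y)\,dy$ with $\overline{B}_{2M}(y):=B_{2M}(\{y\})$ the periodised Bernoulli polynomial, and $\sum_{n=0}^{K-1}\tfrac12(f(n)+f(n+1))=\sum_{n=0}^{K}f(n)-\tfrac12(f(0)+f(K))$. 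Rearranging gives exactly the asserted identity, with $E_{M,K,j,x,c}=-\tfrac{1}{(2M)!}\int_0^{K}f_{K,j,x,c}^{(2M)}(y)\,\overline{B}_{2M}(y)\,dy$.

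For the remainder bound I would use the absolutely convergent Fourier expansion $\overline{B}_{2M}(y)=(-1)^{M+1}\frac{2\,(2M)!}{(2\pi)^{2M}}\sum_{k\ge1}k^{-2M}\cos(2\pi ky)$ (valid since $M\ge1$), which gives $|\overline{B}_{2M}(y)|\le\frac{2\,(2M)!}{(2\pi)^{2M}}\zeta(2M)$ for all $y$. Hence
\begin{align*}
|E_{M,K,j,x,c}|&\le\frac{2\zeta(2M)}{(2\pi)^{2M}}\int_0^{K}\bigl|f_{K,j,x,c}^{(2M)}(y)\bigr|\,dy\le\frac{2\zeta(2)}{(2\pi)^{2M}}\int_0^{K}\bigl|f_{K,j,x,c}^{(2M)}(y)\bigr|\,dy\\
&<\frac{10}{(2\pi)^{2M}}\int_0^{K}\bigl|f_{K,j,x,c}^{(2M)}(y)\bigr|\,dy,
\end{align*}
since $\zeta$ decreases on $(1,\infty)$ so $\zeta(2M)\le\zeta(2)$, and $2\zeta(2)=\pi^2/3<10$. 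This is (\ref{eq:emest1}), so the lemma follows. The only point requiring care is the sign bookkeeping and the vanishing of the odd-index boundary contributions across the $2M$ successive integrations by parts; the rest is routine --- indeed this is just the textbook Euler--Maclaurin formula specialised to the present $f$, so one could equally well simply cite it.
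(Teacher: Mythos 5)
Your proof is correct and is essentially the paper's own argument written out in full: the paper's proof is a one-line citation of the Euler--Maclaurin formula together with the bound $|B_{2m}|\le 10\,(2m)!/(2\pi)^{2m}$, which is exactly the content of your integration-by-parts derivation and your Fourier-series estimate $2\zeta(2M)\le 2\zeta(2)<10$. As you note yourself, one could simply cite the textbook formula, and that is precisely what the paper does.
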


\begin{proof}
This is a direct application of the well-known Euler-Maclaurin summation formula, and the estimate $|B_{2m}| \le 10\, (2m)!/(2\pi)^{2m}$ for $m\ge 1$, say; see~\cite{Ru} for instance. 
\end{proof}

\begin{lem} \label{lem:la3}
For any $\epsilon \in (0,e^{-1})$ , any integer $j\ge 0$, any integer $K>\Lambda(K,j,\epsilon)$, any $x\in [-3/4,3/4]$ say, any real $c$ satisfying $|cK^2| \le 1/48$ say, and with $f_{K,j,x,c}(y)$ and $E_{M,K,j,x,c}$ defined as in lemma~\ref{lem:la1}, we have 

\begin{equation}
\max_{\substack{|y| \le K\\|x|\le 1}} \left|f_{K,j,x,c}^{(m)}(y)\right|\le \left(6\pi c K^2+3\pi/2+\frac{2m+j}{K}\right)^m\,,
\end{equation}

\noindent
where $f_{K,j,x,c}^{(m)}(y)$ denotes the $m^{th}$ derivative of $f_{K,j,x,c}(y)$ with respect to $y$. If $M=\lceil 8\log (K/\epsilon) \rceil$, the remainder $E_{M,K,j,x,c}$ from lemma~\ref{lem:la1} satisfies $|E_{M,K,j,x,c}|<\epsilon$. Also, the value of the derivative $f_{K,j,x,c}^{(m)}(y)$ for any $y$, $x$, $K$, $j$, and $c$, falling within the ranges specified in the lemma, can be computed to within $\pm\,K^{-2} \epsilon$, say, using $O((m+j+1)^2)$ operations on numbers of $O(\nu(K,j,\epsilon)^2)$ bits.  
\end{lem}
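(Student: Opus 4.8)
The plan is to establish the three assertions in turn, with the derivative bound carrying essentially all of the content and the remainder and complexity statements following as corollaries.

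For the derivative bound, I would write $f_{K,j,x,c}(y)=K^{-j}y^{j}e^{u(y)}$ with $u(y):=2\pi i(cy^{3}+xy)$, so that $u'(y)=2\pi i(3cy^{2}+x)$, $u''(y)=12\pi i c y$, $u'''(y)=12\pi i c$, and all higher derivatives of $u$ vanish. By the hypotheses $x\in[-3/4,3/4]$ and $|cK^{2}|\le 1/48$, on $|y|\le K$ one has $|u'(y)|\le 2\pi(3|c|K^{2}+3/4)=6\pi|c|K^{2}+3\pi/2=:C$, $|u''(y)|\le 12\pi|c|K\le \pi/(4K)$, and $|u'''(y)|\le 12\pi|c|\le \pi/(4K^{2})$. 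Writing $(e^{u})^{(r)}=Q_{r}(y)e^{u(y)}$ gives $Q_{0}\equiv 1$ and $Q_{r+1}=Q_{r}'+u'Q_{r}$, so $\deg Q_{r}\le 2r$. Expand $Q_{r}(y)=\sum_{n}\gamma_{r,n}y^{n}$ and set $g_{r}:=\sum_{n}|\gamma_{r,n}|K^{n}$, which majorizes $|Q_{r}(y)|$ for real $|y|\le K$. From $Q_{r+1}=Q_{r}'+u'Q_{r}$ one gets the coefficient recursion $\gamma_{r+1,n}=(n+1)\gamma_{r,n+1}+2\pi i\,x\,\gamma_{r,n}+6\pi i\,c\,\gamma_{r,n-2}$, whence, using $\deg Q_{r}\le 2r$ to bound the $g$-norm of $Q_{r}'$ by $(2r/K)g_{r}$, $g_{r+1}\le (2r/K)g_{r}+(2\pi|x|+6\pi|c|K^{2})g_{r}\le (C+2r/K)\,g_{r}$. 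Iterating, $g_{r}\le\prod_{s=0}^{r-1}(C+2s/K)\le (C+2r/K)^{r}$, hence $\max_{|y|\le K}|(e^{u})^{(r)}(y)|\le (C+2r/K)^{r}$. Finally Leibniz's rule $f^{(m)}=K^{-j}\sum_{\ell=0}^{m}\binom{m}{\ell}(y^{j})^{(\ell)}Q_{m-\ell}e^{u}$, together with $|(y^{j})^{(\ell)}|\le \frac{j!}{(j-\ell)!}|y|^{j-\ell}\le j^{\ell}K^{j-\ell}$ on $|y|\le K$ (and $0$ for $\ell>j$) and $|e^{u(y)}|=1$, yields $|f^{(m)}(y)|\le\sum_{\ell}\binom{m}{\ell}(j/K)^{\ell}(C+2m/K)^{m-\ell}=\bigl(C+(2m+j)/K\bigr)^{m}$, which is the claimed bound.

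For the remainder bound I would substitute the derivative estimate with $m=2M$ into (\ref{eq:emest1}): $|E_{M,K,j,x,c}|\le 10\,(2\pi)^{-2M}\int_{0}^{K}\bigl(C+(4M+j)/K\bigr)^{2M}\,dy\le 10K\bigl((C+(4M+j)/K)/(2\pi)\bigr)^{2M}$. Since $K>\Lambda(K,j,\epsilon)=50^{4}\nu^{6}$ with $\nu=(j+1)\log(K/\epsilon)\ge\max\{j+1,\log(K/\epsilon)\}$, the correction $(4M+j)/K$ is negligible (of order $\nu^{-5}$), so $C+(4M+j)/K\le 13\pi/8+o(1)$ and the base of the $2M$-th power is at most $13/16+o(1)<1$. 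On the other hand $M=\lceil 8\log(K/\epsilon)\rceil\ge 8(\log K+\log(1/\epsilon))$, and since $\log K>\log\Lambda$ is large, $10K\,(13/16+o(1))^{2M}<\epsilon$; keeping track of the constants gives $|E_{M,K,j,x,c}|<\epsilon$ for the stated $M$. For the computational claim I would use the same decomposition $f^{(m)}(y)=K^{-j}e^{u(y)}\sum_{\ell=0}^{\min(m,j)}\binom{m}{\ell}\frac{j!}{(j-\ell)!}y^{j-\ell}Q_{m-\ell}(y)$: evaluate $u(y)$ and the single complex exponential $e^{u(y)}$; build the coefficient arrays of $Q_{0},\dots,Q_{m}$ from the recursion for $\gamma_{r+1,n}$ above, each of the $O(m)$ steps costing $O(m)$ operations since $\deg Q_{r}\le 2m$; evaluate each $Q_{r}(y)$ by Horner's rule; and produce the binomial coefficients, the falling factorials, the powers $y^{j-\ell}$, and $K^{-j}$ by the obvious recursions and repeated squaring. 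The dominant cost is the $O(m)$ polynomial manipulations, for a total of $O((m+j+1)^{2})$ arithmetic operations. Every intermediate quantity has modulus between $\exp(-O(m+j))$ and $\exp(O(m+j))$, so $O(\nu(K,j,\epsilon)^{2})$-bit arithmetic represents all of them and, by routine propagation through $O(m^{2})$ operations, keeps the accumulated error below $\pm K^{-2}\epsilon$.

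I expect the first paragraph to be the only place requiring real care: the coefficient-norm (majorant) induction is what delivers the estimate $|Q_{r}(y)|\le (C+2r/K)^{r}$ with the precise constant $C=6\pi|c|K^{2}+3\pi/2$ and the exact correction $2r/K$, and it is precisely here that the hypotheses $|x|\le 3/4$ and $|cK^{2}|\le 1/48$ enter. Once that estimate is in hand, the remainder and complexity bounds are immediate, modulo the bookkeeping with the size of $K$ relative to $\Lambda(K,j,\epsilon)$.
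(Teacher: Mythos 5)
Your proof is correct and takes essentially the same approach as the paper: a first-order recursion for the polynomial factor appearing in the $m$-th derivative, controlled by a coefficient $1$-norm (your majorant $g_r$ corresponds to the paper's $|P_m|_1$). The only organizational difference is that the paper runs the recursion directly on $P_m$ (which incorporates the $y^j/K^j$ factor from the start, with $P_0=y^j/K^j$), while you factor out $y^j$, bound $Q_r$ for $e^{u}$ alone, and reassemble via Leibniz and the binomial theorem; both routes give the exact constant $\bigl(6\pi|c|K^2+3\pi/2+(2m+j)/K\bigr)^m$, and your Leibniz recombination makes the appearance of $(2m+j)/K$ especially transparent. One small observation: the paper's displayed maximum is over $|x|\le 1$ while the hypothesis is $x\in[-3/4,3/4]$; the constant $3\pi/2=2\pi\cdot\tfrac{3}{4}$ only comes out if the maximum is taken over $|x|\le 3/4$, which is what you (correctly) use.
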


\begin{proof}
It is not hard to see $f_{K,j,x,c}^{(m)}(y)=P_{m,K,j,x,c} (y) e^{2\pi i c y^3+2\pi i x y}$, $P_{m,K,j,x,c}(y)$ is a polynomial in $y$ of degree $2m+j$ (it is also a polynomial in $x$ of degree $m$). Notice $|f_{K,j,x,c}^{(m)}(y)|=|P_{m,K,j,x,c}(y)|$, and the polynomials $P_{m,K,j,x,c}(y)$ are determined by the following recursion on $m$: 

\begin{equation}\label{eq:pkxxrec}
P_{m+1,K,j,x,c}(y)= 2\pi i (x+3cy^2) P_{m,K,j,x,c}(y)+ \frac{d}{dy} P_{m,K,j,x,c}(y)\,,
\end{equation}

\noindent
where $P_{0,K,j,x,c}(y):=y^j/K^j$.  So  $P_{K,j,x,c,m}(y)=\sum_{l=0}^{2m+j} d_{l,m,K,j,c}(x) y^l$, where the coefficients $d_{l,m,K,j,c}(x)=:\sum_{r=0}^m z_{r,l,m,K,j,c}\, x^r$. It is convenient to define the norm $|P_{m,K,j,x,c}(y)|_1:= \sum_{l=0}^{2m+j} \sum_{r=0}^m |z_{r,l,m,K,j,c}\,x^r y^l|$.  Notice $|P_{m,K,j,x,c}(y)|\le |P_{m,K,j,x,c}(y)|_1$. By induction on $m$, suppose 

\begin{equation}\label{eq:pkxx1}
\max_{\substack{|y|\le K\\ |x|\le 1}} \left|P_{K,j,x,c,m}(y)\right|_1 \le \left(6\pi c K^2+3\pi/2 +(2m+j)/K\right)^m\,. 
\end{equation}

\noindent
One easily deduces from (\ref{eq:pkxx1}) that

\begin{equation}\label{eq:pkxx2}
\max_{\substack{|y|\le K\\ |x|\le 1}} \left|\frac{d}{dy}\,P_{m,K,j,x,c}(y)\right|_1 \le \frac{2m+j}{K}\,\max_{\substack{|y|\le K\\ |x|\le 1}} \left|P_{m,K,j,x,c}(y)\right|_1\,.
\end{equation}

\noindent
On combining (\ref{eq:pkxxrec}), (\ref{eq:pkxx1}), and (\ref{eq:pkxx2}), the first part of the lemma follows. The second part of the lemma follows by using the recursion (\ref{eq:pkxxrec}).


\end{proof} 

\begin{lem} \label{lem:la4}
Let $B_2=1/6$, $B_4=-1/30$,$\ldots\,$, denote the even Bernoulli numbers. There are absolute constants $\kappa_{11}$, $\kappa_{12}$, $A_{15}$, $A_{16}$, and $A_{17}$, such that for any \mbox{$\epsilon \in (0,e^{-1})$}, any integer $j\ge 0$,  any positive integers $K$, $K_1$ satisfying $\Lambda (K,j,\epsilon)\le K_1 \le K$, any $a \in [0,1)$, any $b\in [0,1)$, any real $c$ satisfying $|c K_1^2|<1/48$, any $1\le m \le 100 \nu(K,j,\epsilon)$ say,  any  $\alpha\in [-1,1]$, any interval $[w,z] \subset [-1,1]$, and with $f_{K,j,x,c}(y):=\frac{y^j}{K^{j}}e^{2\pi i c y^3+2\pi i x y}$, the sum

\begin{equation} \label{eq:done1}
\frac{B_{2m}}{(2m)!}\int_{w}^{z} \left(f_{K,j,x,c_1}^{(2m-1)}(K_1)-f_{K,j,x,c_1}^{(2m-1)}(0)\right)  F(K;a+\alpha x,b)\, dx\,, 
\end{equation}

\noindent
where $f_{K,j,x,c}^{(m)}(y)$ denotes the $m^{th}$ derivative of $f_{K,j,x,c}(y)$ with respect to $y$, can be computed to within $\pm \,A_{15}\,\nu(K,j,\epsilon)^{\kappa_{11}} K^{-2} \epsilon$ using $\le A_{16}\,\nu(K,j,\epsilon)^{\kappa_{12}}$ arithmetic operations on numbers of $\le A_{17}\,\nu(K,j,\epsilon)^2$ bits. 
\end{lem}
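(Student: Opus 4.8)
The plan is to handle (\ref{eq:done1}) in two moves: first use Lemma~\ref{lem:la3} to turn the Euler--Maclaurin correction term into an explicit polynomial in $x$ times a single exponential, which reduces the problem to a short linear combination of integrals of the quadratic sum $F(K;a+\alpha x,b)$ against weights of the form $x^r e^{2\pi i\gamma_0 x}$; and then to evaluate each such integral by expanding $F$ term by term in $k$, integrating explicitly in $x$, and recognizing the resulting $k$-sums as (partial) quadratic exponential sums supplied by Theorem~\ref{quadraticthm}. For the first move, Lemma~\ref{lem:la3} gives $f_{K,j,x,c_1}^{(2m-1)}(y)=P_{2m-1,K,j,x,c_1}(y)\,e^{2\pi i c_1 y^3+2\pi i xy}$ with $P_{2m-1,K,j,x,c_1}$ a polynomial in $x$ and $y$; evaluating at $y=K_1$ and $y=0$ and collecting powers of $x$ one writes
\begin{equation}
\frac{B_{2m}}{(2m)!}\left(f_{K,j,x,c_1}^{(2m-1)}(K_1)-f_{K,j,x,c_1}^{(2m-1)}(0)\right)=\sum_{r=0}^{2m-1}\left(u_r\,e^{2\pi i K_1 x}-u'_r\right)x^r\,,
\end{equation}
with $u_r,u'_r$ read off from the recursion (\ref{eq:pkxxrec}). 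Because $|c_1K_1^2|<1/48$, $j\le\nu$, $m\le100\nu$, and $K_1>\Lambda=50^4\nu^6$, Lemma~\ref{lem:la3} (applied with its ``$K$'' equal to $K_1$, the bound carrying over to $f_{K,j,x,c_1}$ since $|K_1/K|^j\le1$) gives $6\pi c_1K_1^2+3\pi/2+(2(2m-1)+j)/K_1<2\pi$; combined with $|B_{2m}/(2m)!|\le10(2\pi)^{-2m}$ and the $\ell^1$-norm bound (\ref{eq:pkxx1}) established in the proof of that lemma, this yields $\sum_r(|u_r|+|u'_r|)=O(1)$ with each $u_r,u'_r$ computable to within $\pm\,K^{-3}\epsilon$ in $\nu^{O(1)}$ operations. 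Thus it will suffice to evaluate, for $0\le r\le2m-1=O(\nu)$ and $\gamma_0\in\{0,K_1\}$, the integral $\mathcal I_{r,\gamma_0}:=\int_w^z x^r e^{2\pi i\gamma_0 x}F(K;a+\alpha x,b)\,dx$.

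For the second move I would expand $F(K;a+\alpha x,b)=\sum_{k=0}^K e^{2\pi i ak+2\pi i bk^2}e^{2\pi i\alpha kx}$, so that $\mathcal I_{r,\gamma_0}=\sum_{k=0}^K e^{2\pi i ak+2\pi i bk^2}\int_w^z x^r e^{2\pi i(\gamma_0+\alpha k)x}\,dx$, and split the sum over $k$ according to whether $|\gamma_0+\alpha k|\le1$ or $>1$ (the first set is an interval of $k$'s, containing at most one $k$ with $\gamma_0+\alpha k=0$, handled by computing $\int_w^z x^r\,dx$ directly). On $0<|\gamma_0+\alpha k|\le1$ I would expand $e^{2\pi i(\gamma_0+\alpha k)x}$ in powers of $(\gamma_0+\alpha k)$ and truncate after $O(\nu)$ terms (per-term tail $O(\epsilon/K^{10})$ since $|(\gamma_0+\alpha k)x|\le1$), getting a polynomial in $k$ of degree $O(\nu)$. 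On $|\gamma_0+\alpha k|>1$ I would use the elementary identity $\int_w^z x^r e^{2\pi i\gamma x}\,dx=\sum_{l=0}^r\frac{(-1)^l r!}{(r-l)!(2\pi i)^{l+1}}\gamma^{-(l+1)}(z^{r-l}e^{2\pi i\gamma z}-w^{r-l}e^{2\pi i\gamma w})$ (legitimate as $|\gamma|=|\gamma_0+\alpha k|>1$), then cut the $k$-range into $O(\nu^2)$ dyadic-type subintervals on each of which $\gamma_0+\alpha k$ varies by a factor $\le1+1/(2\nu)$ and expand $(\gamma_0+\alpha k)^{-(l+1)}$ in a geometric series about the subinterval's left endpoint, truncating after $O(\nu)$ terms. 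In every case $\mathcal I_{r,\gamma_0}$ is reduced to a linear combination, with quickly computable coefficients, of $\nu^{O(1)}$ partial sums $\sum_{k_1\le k\le k_2}k^p e^{2\pi i a'k+2\pi i bk^2}$ with $p=O(\nu)$ and $a'$ a quickly computable real; shifting the summation index and using the binomial theorem, each such partial sum is a $\nu^{O(1)}$-term combination of values $F(k_2-k_1,s;\{a'+2bk_1\},b)$, which are provided by Theorem~\ref{quadraticthm}.

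The accounting is then routine: calling Theorem~\ref{quadraticthm} with accuracy $\epsilon'=\nu^{-\kappa}K^{-3}\epsilon$ makes each of the $\nu^{O(1)}$ quadratic-sum evaluations accurate to $\pm\,\nu^{\kappa_4}\epsilon'$ in $\nu^{O(1)}$ operations on $O(\nu^2)$-bit numbers; the combining coefficients are all bounded (those coming from Lemma~\ref{lem:la3} as above, the Taylor and binomial ones by absolute powers and factorials), and although a handful of intermediate quantities can be as large as $2^{O(\nu)}$ or $(2m-1)!$, they still occupy $O(\nu^2)$ bits, so the (inevitable) cancellations among them are recovered by the standing $O(\nu^2)$-bit arithmetic. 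Summing the contributions gives total error $\le A_{15}\nu^{\kappa_{11}}K^{-2}\epsilon$ and total cost $\le A_{16}\nu^{\kappa_{12}}$ operations on $\le A_{17}\nu^2$-bit numbers.

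The step I expect to be the real obstacle is making the treatment of the reciprocal weights $(\gamma_0+\alpha k)^{-(l+1)}$ uniform when $\alpha$ is as small as $\Lambda/K$: then the ``non-oscillatory'' block $\{k:|\gamma_0+\alpha k|\le1\}$ can have length comparable to $K$, so it genuinely has to be absorbed into Theorem~\ref{quadraticthm} rather than enumerated, and the subdivision of the ``oscillatory'' block must be arranged so that the geometric expansion of the weight converges at a rate independent of $m,j,K,K_1,a,b,c_1,\alpha,w,z$ while the number of subintervals (hence of quadratic-sum calls) stays polynomial in $\nu$. Verifying all of these estimates simultaneously, together with the bit-length control just mentioned, is where most of the work lies; everything else is bookkeeping of the kind already illustrated in Section~2 and in the proof of Lemma~\ref{lem:la3}.
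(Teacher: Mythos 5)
Your decomposition is the same as the paper's: Lemma~\ref{lem:la3} turns the Euler--Maclaurin correction term into a bounded polynomial in $x$ times $e^{2\pi i K_1 x}$ or $1$, reducing everything to integrals $\int_w^z x^l e^{2\pi i\tau x}F(K;a+\alpha x,b)\,dx$ with $\tau\in\{0,K_1\}$ (the paper's (\ref{eq:done11gen})); the sum over $k$ is then split by the size of $|\tau+\alpha k|$, with Taylor expansion of the exponential on the small range and explicit integration by parts on the large range, everything landing in Theorem~\ref{quadraticthm} (your dyadic subdivision and geometric expansion of the reciprocal weights re-derives by hand what the paper gets by citing the weighted quadratic sums of Section~5 of~\cite{HI}). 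The one place your version, as written, does not deliver the stated bounds is the cutoff $|\tau+\alpha k|>1$ for the integration-by-parts branch. The terms of your identity for $\int_w^z x^r e^{2\pi i\gamma x}\,dx$ have modulus up to $\frac{r!}{(r-l)!}(2\pi|\gamma|)^{-(l+1)}$, which for $|\gamma|$ barely above $1$ and $l$ comparable to $r=O(\nu)$ is of order $(r/2\pi)^r=\nu^{O(\nu)}$. These are precisely the coefficients multiplying the individually computed quadratic sums, so the accumulated error is $\nu^{O(\nu)}\epsilon'$ rather than $\nu^{O(1)}\epsilon'$; compensating by shrinking $\epsilon'$ to $\nu^{-O(\nu)}K^{-3}\epsilon$ inflates $\nu(K,j,\epsilon')$ and hence the bit-length that Theorem~\ref{quadraticthm} demands beyond the claimed $O(\nu(K,j,\epsilon)^2)$. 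The paper avoids this by cutting at $|\tau+\alpha k|>2l+2$ instead of $1$, which makes every coefficient in (\ref{eq:frange2}) at most $1$ in modulus (via $l!/(l-v)!\le l^v$ and $|\tau+\alpha k|\ge 2l+2$), at the price of first rescaling $x\leftarrow(2l+2)x$ and subdividing into $O(\nu)$ unit intervals before Taylor-expanding on the complementary range. With that single change of threshold your argument closes; the remaining concerns you raise (small $\alpha$, cancellation in the binomial expansion of $(\tau+\alpha k)^s$) are handled in the paper exactly as you propose, by absorbing the long non-oscillatory block into quadratic sums and by shifting the summation index before expanding.
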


\begin{proof}
Write $f_{K,j,x,c}^{(m)}(y)=P_{K,j,y,c,m} (x)e^{2\pi i c y^3+2\pi i x y}$, where, as can be seen from the proof of lemma~\ref{lem:la3}, $P_{K,j,y,c,m}(x)=\sum_{l=0}^m v_{l,K,j,c,m}(y) x^l$, and $v_{l,K,j,c,m}(y)$ are polynomials in $y$ of degree $\le 2m+j$.  Now the bound $|v_{l,K,j,c,m}|\le (2\pi)^m$, afforded by lemma~\ref{lem:la3}, together with $|B_{2m}/(2m)!|\le 10/(2\pi)^{2m}$, yields the bound $(B_{2m}/(2m)!) |v_{l,K,j,c,2m-1}(y)|_1\le 10$. So in order to be able to compute (\ref{eq:done1}) with the claimed accuracy, it is enough to be able to deal with the integrals 

\begin{equation} \label{eq:done11}
\int_w^z  x^l e^{2\pi i K_1 x} F(K; a+\alpha x,b) \, dx\,,\qquad  \int_{w}^{z}  x^l F(K; a+\alpha x,b)\, dx\,,
\end{equation}

\noindent
where $0\le l\le m$.  More generally, we show how to deal with integrals of the form 

\begin{equation}\label{eq:done11gen}
\int_w^z x^l e^{2\pi i \tau x} F(K;a+\alpha x,b)\,,
\end{equation}

\noindent
where $-K\le \tau \le K$ say. We split the quadratic sum $F(K; a+\alpha x,b)$ into two subsums, one over $\{0\le k\le K\,:\, |\tau+\alpha k|\le 2l+2\}$, and another over $\{0\le k\le K\,:\, |\tau+\alpha k|> 2l+2\}$. For the first subsum, we use a change of variable to expand the interval of integration, then we divide the expanded interval into a few consecutive subintervals, and over each subinterval we show the integral can be computed efficiently. Specifically, we start by applying the change of variable $x\leftarrow (2l+2)x$ to the integral (\ref{eq:done11gen}). Then we divide the now expanded interval of integration into $\le 4l+4=O(\nu)$ consecutive subintervals $[n,n+1)$, where $0\le n<4l+4$. This leads to $O(\nu)$ integrals of the form

\begin{equation}\label{eq:stanarg1}
\sum_{\substack{0\le k\le K\\ |\tau+\alpha k|\le 2l+2}} e^{2\pi i a k +2\pi i b k^2} \frac{1}{(2l+2)^{l+1}}\int_n^{n+1} x^l e^{2\pi i (\tau+\alpha k) x/(2l+2)}\,dx,
\end{equation} 

\noindent
For each integral (\ref{eq:stanarg1}), we apply the change of variable $x\leftarrow x-n$. Then we use Taylor expansions to reduce the integrand to a polynomial in $x$ of degree bounded by $O(\nu)$, plus an error of size $O(\epsilon/K^2)$ say. Explicitly, we obtain

\begin{equation}
\begin{split}
\frac{1}{(2l+2)^{l+1}}&\int_n^{n+1} x^l e^{2\pi i (\tau+\alpha k) x/(2l+2)}\,dx=\\
&\frac{e^{2\pi i (\tau +\alpha k)n}}{(2l+2)^{l+1}}\sum_{r=0}^l \binom{l}{r} n^{l-r} \sum_{s=0}^{\lceil 2\nu\rceil} \frac{(2\pi i )^s}{s!}\,\frac{(\tau+\alpha k)^s}{(2l+2)^s}\int_0^1 x^{r+s} + O(\epsilon/K^2)\,.
\end{split}
\end{equation} 

\noindent
The coefficients in said polynomial are quickly computable and are each bounded by $O(1)$. We then integrate (the polynomial in $x$) explicitly. On substituting back into (\ref{eq:stanarg1}), we obtain a linear combination, with quickly computable coefficients each of size $O(1)$, of quadratic exponential sums. And these sums are handled by Theorem~\ref{quadraticthm} of~\cite{Hi}. 

We remark it is not desirable to immediately apply a binomial expansion to the powers $(\tau +\alpha k)^s$ resulting from the above procedure  because the terms of such an expansion might have significant cancellations among them, depending on the signs and sizes of $\alpha$ and $\tau$. Instead, one can first change the index of summation by $k\leftarrow k+\lfloor\tau/\alpha\rfloor$ (if $|\alpha| > 1/K^2$ say), then apply a binomial expansion. This way, the amount of cancellation is minimal, which is useful in practice (in theory this does not matter because $|\tau|\le K$, $s=O(\nu)$, and we are using $O(\nu^2)$ bit arithmetic, so the amount cancellation is manageable either way). 

For the second subsum, we integrate explicitly with respect to $x$. Specifically, 

\begin{equation}\label{eq:frange2}
\int_w^z x^l e^{2\pi i (\tau+\alpha k)x}\,dx = \sum_{v=0}^l \frac{(-1)^v\,l!}{(l-v)!}\,\frac{z^{l-v} e^{2\pi i (\tau+\alpha k)z}-w^{l-v}e^{2\pi i (\tau +\alpha k)w}}{(2\pi i \tau+2\pi i \alpha k)^{v+1}}\,.
\end{equation}

\noindent
Substituting (\ref{eq:frange2}) back into the second subsum produces a linear combination, with quickly computable coefficients, of $2l+2$ exponential sums; namely,

\begin{equation}\label{eq:ndsubsum}
\sum_{v=0}^l \frac{(-1)^v\,l!\, z^{l-v}e^{2\pi i \tau z}}{(l-v)!\, (2\pi i )^{v+1}} \sum_{\substack{0\le k\le K\\2l+2<|\tau+\alpha k|}} \frac{e^{2\pi i (a+\alpha z) k+2\pi i b k^2}}{(\tau+\alpha k)^{v+1}}\,,
\end{equation}

\noindent
as well as another identical sum but with $z$ replaced by $w$. We may assume $|\alpha|\ge  1/K^2$ say, otherwise we can apply a Taylor expansion to the term $e^{2\pi i \alpha k x}$ in (\ref{eq:done11gen}) from the beginning, which immediately reduces it to a linear combination, with quickly computable coefficients each of size $O(1)$,  of $O(\nu(K,j,\epsilon))$ quadratic exponential sums, and such sums are handled by Theorem~\ref{quadraticthm}. To deal with subsum in (\ref{eq:ndsubsum}) with $\tau+\alpha k>2l+2$, for example, we define $k_0=\lfloor (2l+2 -\tau)/\alpha\rfloor$, and so

\begin{equation}
\sum_{\substack{0\le k\le K\\2l+2<\tau+\alpha k}} \frac{e^{2\pi i (a+\alpha z) k+2\pi i b k^2}}{(\tau+\alpha k)^{v+1}}= e^{2\pi i (a+\alpha z)k_0+2\pi i b k_0^2} \sum_{k=1}^{K-k_0} \frac{e^{2\pi i (a+\alpha z+2b k_0) k+2\pi i b k^2}}{(\tau+\alpha k_0+\alpha k)^{v+1}}\,.
\end{equation}
 
\noindent
Observing $\tau+\alpha k_0 \ge 2l+2$, and $l!/(l-v)!\le l^v$, we see 

\begin{equation}
\left|\frac{(-1)^v\,l!\, z^{l-v}e^{2\pi i \tau z}}{(l-v)!\, (2\pi i )^{v+1}\,(\tau+\alpha k_0+\alpha k)^{v+1}}\right|<1\,,
\end{equation}

\noindent
for all $1\le k\le K-k_0$. In particular, the subsum in (\ref{eq:ndsubsum}) with $\tau+\alpha k>2l+2$ is of the type discussed in \textsection{5} of~\cite{Hi}, which is handled by Theorem~\ref{quadraticthm} since, as shown in~\cite{Hi}, such sums can be reduced to a linear combination, with quickly computable coefficients each of size $O(1)$, of $O(\nu^2)$ quadratic sums. Last, the treatment of the subsum in (\ref{eq:ndsubsum}) with $\tau+\alpha k<-2l-2$ is identical. 
\end{proof}

\begin{lem} \label{lem:la5}
There are absolute constants $\kappa_{13}$, $\kappa_{14}$, $A_{18}$, $A_{19}$, and $A_{20}$, such that for any \mbox{$\epsilon \in (0,e^{-1})$}, any integer $j \ge 0$,  any positive integers $K$, $K_1$ satisfying $\Lambda(K,j,\epsilon)\le K_1\le K$, any $a\in [0,1)$, any $b \in [0,1)$, any real $c$ satisfying $|cK_1^2|<1/48$,  any  $\alpha\in [-1, 1]$, and any interval $(w,z)\subset (-1,1)$ such that $|w|\ge 1/4$, the function 

\begin{equation} \label{eq:done2}
\frac{1}{(K_1)^j} \int_{w}^{z} \int_0^{K_1} y^j e^{2\pi i c y^3-2\pi i x y} F(K;a+\alpha x,b)\, dy\, dx\,, 
\end{equation}

\noindent
can be computed to within $\pm\,A_{18}\,\nu(K,j,\epsilon)^{\kappa_{13}} K^{-2} \epsilon$ using $\le A_{19}\,\nu(K,j,\epsilon)^{\kappa_{14}}$ arithmetic operations on numbers of $\le A_{20}\,\nu(K,j,\epsilon)^2$ bits.
\end{lem}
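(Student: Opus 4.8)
The plan is to exploit the fact that the inner $y$-integral in (\ref{eq:done2}) has no saddle-point. Indeed, since $|cK_1^2|<1/48$ and $|x|\ge 1/4$ throughout $(w,z)$, the derivative of the phase $cy^3-xy$ with respect to $y$ is $3cy^2-x$, and $|3cy^2-x|\ge |x|-3|c|K_1^2\ge 1/4-1/16=3/16$ for all $0\le y\le K_1$, so the method of non-stationary phase applies. First I would rescale $y=K_1t$, writing the inner integral as
\begin{equation}
\int_0^{K_1}y^je^{2\pi i cy^3-2\pi i xy}\,dy = K_1^{j+1}\int_0^1 t^je^{2\pi i\psi(t)}\,dt,\qquad \psi(t):=cK_1^3t^3-xK_1t,
\end{equation}
where $\psi'(t)=K_1(3cK_1^2t^2-x)$ satisfies $|\psi'(t)|\ge 3K_1/16$ on $[0,1]\times(w,z)$ and $|\psi''(t)|\le K_1/8$. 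The crucial point is that integrating by parts against $e^{2\pi i\psi}$ multiplies the integral by a factor of size $O(\nu(K,j,\epsilon)/K_1)$: the gain $1/|2\pi\psi'|\le 8/(3\pi K_1)$, the quantity $|\psi''|/|\psi'|^2=O(1/K_1)$, the factor $j/|\psi'|=O(j/K_1)$, and the chain-rule factor $|6cK_1^2|<1/8$ picked up when differentiating a denominator $(3cK_1^2t^2-x)^p$, are each $O(\nu(K,j,\epsilon)/K_1)$, while $K_1>\Lambda(K,j,\epsilon)=50^4\nu(K,j,\epsilon)^6$ makes this $<1/2$. Hence, after $m=O(\nu(K,j,\epsilon))$ steps,
\begin{equation}
\int_0^1 t^je^{2\pi i\psi(t)}\,dt = \mathcal{B}_0(x)+\mathcal{B}_1(x)\,e^{2\pi i(cK_1^3-xK_1)}+E,\qquad |E|\le\epsilon/K^3,
\end{equation}
where each $\mathcal{B}_i(x)$ is a linear combination of $O(\nu(K,j,\epsilon))$ rational functions of $x$ with poles only at $x=0$ or $x=3cK_1^2$, hence at distance $\ge 3/16$ from $(w,z)$; each $\mathcal{B}_i$ carries at least one factor $1/\psi'=O(1/K_1)$, so $|\mathcal{B}_i|=O(1/K_1)$ with derivatives of controlled growth, and its coefficients — which involve only $c$, $K_1$, $j$, and $cK_1^3\bmod 1$ — are computable to within $\pm\,\epsilon/K^3$ in $\mathrm{poly}(\nu(K,j,\epsilon))$ operations on $O(\nu(K,j,\epsilon)^2)$-bit numbers.

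Substituting back (the prefactor $1/K_1^j$ cancels $K_1^j$, leaving a single factor $K_1$ out front), the quantity (\ref{eq:done2}) equals
\begin{equation}
K_1\int_w^z\bigl(\mathcal{B}_0(x)+\mathcal{B}_1(x)\,e^{2\pi i(cK_1^3-xK_1)}\bigr)F(K;a+\alpha x,b)\,dx
\end{equation}
plus an error which, by $|E|\le\epsilon/K^3$, $|F|\le K+1$, $K_1\le K$, and $z-w\le 2$ (and taking $m$ a suitable fixed multiple larger), is at most $\epsilon/K^2$. Next I would approximate each $\mathcal{B}_i$, a bounded analytic function on $(w,z)$ all of whose poles lie at distance $\ge 3/16$, to within $\pm\,\epsilon/K^3$ by a polynomial of degree $O(\nu(K,j,\epsilon))$ with quickly computable coefficients of size $O(1/K_1)$ — via Taylor expansion about the midpoint of $(w,z)$, after first subdividing $(w,z)$ into $O(1)$ sub-intervals of length $<3/16$ if necessary. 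This reduces the problem to evaluating a linear combination, with quickly computable coefficients, of $\mathrm{poly}(\nu(K,j,\epsilon))$ integrals of the form
\begin{equation}
\int_{w'}^{z'}x^le^{2\pi i\tau x}F(K;a+\alpha x,b)\,dx,\qquad 0\le l=O(\nu(K,j,\epsilon)),\quad \tau\in\{0,-K_1\},
\end{equation}
over sub-intervals $(w',z')\subseteq(-1,1)$; note $-K\le\tau\le K$ since $K_1\le K$. These are precisely the integrals treated inside the proof of Lemma~\ref{lem:la4}, where each is shown to reduce to a linear combination, with quickly computable coefficients of size $O(1)$, of $O(\nu(K,j,\epsilon)^2)$ quadratic exponential sums (and sums of the type discussed in Section~5 of~\cite{HI}, which likewise reduce to quadratic sums), all computable efficiently by Theorem~\ref{quadraticthm} — invoked here with $\epsilon$ replaced by $\epsilon/K^3$, which alters $\nu(K,j,\epsilon)$ only by a bounded factor. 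Summing the contributions (integration-by-parts truncation $\le\epsilon/K^2$, polynomial-approximation $\le\epsilon/K^2$, and $\mathrm{poly}(\nu(K,j,\epsilon))$ invocations of Theorem~\ref{quadraticthm}, each contributing $O(\nu(K,j,\epsilon)^{\kappa_4}\epsilon/K^3)$) gives total error $O(\nu(K,j,\epsilon)^{\kappa_{13}}K^{-2}\epsilon)$, achieved in $O(\nu(K,j,\epsilon)^{\kappa_{14}})$ arithmetic operations on numbers of $O(\nu(K,j,\epsilon)^2)$ bits.

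The hard part will be the integration-by-parts bookkeeping of the first paragraph: one must verify that after each step the amplitude remains a rational function of $(x,t)$ whose denominator is a power of $3cK_1^2t^2-x$ — uniformly bounded below by $(3/16)^p$ for the relevant exponent $p=O(\nu(K,j,\epsilon))$ on $[0,1]\times(w,z)$, so that boundary terms and residual integrals stay controllable — and, crucially, that no extra power of $K_1$ enters the numerators faster than the single power of $1/K_1$ gained per step. This is exactly where the three hypotheses $|cK_1^2|<1/48$, $|x|\ge 1/4$ on $(w,z)$, and $K_1>\Lambda(K,j,\epsilon)$ are used in concert. The subsequent reduction to Lemma~\ref{lem:la4} and the error tally then follow patterns already in place in Sections~4 and~5.
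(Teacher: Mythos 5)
Your proposal is correct, and it reaches the same endpoint as the paper — a reduction to integrals of the form $\int_{w}^{z} x^l e^{2\pi i \tau x} F(K;a+\alpha x,b)\,dx$ handled by the machinery in the proof of Lemma~\ref{lem:la4} and ultimately by Theorem~\ref{quadraticthm} — but it gets there by a genuinely different analytic device. The paper exploits the absence of a saddle-point by applying Cauchy's theorem to rotate the $y$-contour onto $C_1=\{te^{-i\pi/6}\}$ and $C_2=\{K_1-it\}$, where the integrand decays faster than $e^{-y/6}$; it then truncates at $O(\nu)$, subdivides into unit intervals, Taylor-expands, and integrates explicitly, producing $O(\nu^2)$ integrals carrying damped factors such as $e^{-\sqrt{3}\pi i n x-\pi n x}$ and $e^{-2\pi i K_1 x-2\pi n x}$. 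You instead run a non-stationary-phase argument: repeated integration by parts against $e^{2\pi i\psi}$, using $|\psi'|\ge 3K_1/16$ (which rests on exactly the same two hypotheses, $|cK_1^2|<1/48$ and $|x|\ge 1/4$, that power the paper's decay estimate), gaining $O(\nu/K_1)$ per step so that $K_1>\Lambda=50^4\nu^6$ kills the residual after $O(\nu)$ steps, leaving only the boundary terms at $t=0,1$ with purely oscillatory factors $\tau\in\{0,-K_1\}$. Your route produces fewer and cleaner terminal integrals (they are literally instances of the paper's (\ref{eq:done11gen}) with no extra damping factor, so the fallback to Lemma~\ref{lem:la4} is immediate), at the price of the rational-amplitude bookkeeping you flag yourself — verifying that the denominators stay powers of $3cK_1^2t^2-x$ bounded below by $(3/16)^p$ and that no stray powers of $K_1$ accumulate — whereas the paper's contour rotation trades that bookkeeping for the (equally routine) truncate-subdivide-expand pattern it reuses throughout Section~5. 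Both arguments are sound; note only that, like the paper, you implicitly assume $(w,z)$ lies entirely on one side of the origin (so that $|x|\ge 1/4$ throughout), which is how the lemma is actually applied in subsection~4.1.
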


\begin{proof}
We assume $w>0$, since if $w<0$ the treatment is completely analogous.  Define the contours $C_1:= \{te^{-i\pi/6}\,|\,0\le t \le 2K_1/\sqrt{3}\}$, $C_2:= \{K_1-it\,|\,0\le t \le K_1/\sqrt{3}\}$, and $C_0:= \{t\,|\,0\le t \le K_1\}$. Also define

\begin{equation}
I_x(C) :=I_{x,K_1,j,c}(C)= \frac{1}{(K_1)^j}\int_C y^j e^{2\pi i c y^3-2\pi i x y}\, dy\,. 
\end{equation}

\noindent
With this notation, the integral (\ref{eq:done2}) can be expressed as 

\begin{equation}\label{eq:ixc00}
\int_w^z I_x(C_0)F(K;a+\alpha x, b)\, dx\,. 
\end{equation}

\noindent
By Cauchy's Theorem $I_x(C_0)=I_x(C_1)-I_x(C_2)$. And by a routine calculation,

\begin{equation}\label{eq:ixc12}
\begin{split}
&I_x(C_1) = \frac{d_{1,K_1,j,c}}{(K_1)^j} \int_0^{\frac{2K_1}{\sqrt{3}}} y^j e^{2\pi c y^3-\sqrt{3} \pi i x y-\pi  x y}\, dy\,,  \\
&I_x(C_2) =   \frac{d_{2,K_1,j,c} e^{-2\pi i K_1 x}}{(K_1)^j} \int_0^{\frac{K_1}{\sqrt{3}}} (K_1-iy)^j e^{6\pi  c K_1^2 y -6\pi i c K_1 y^2-2\pi  c y^3-2\pi x y}\, dy\,, 
\end{split}
\end{equation}

\noindent
where $d_{1,K_1,j,c}$ and $d_{2,K_1,j,c}$ each has modulus 1, and both can be computed quickly. 

Since $1/4\le w\le x$ and $c K_1^2 \le 1/48$, the absolute values of the integrands in $I_x(C_1)$ and $I_x(C_2)$ decline faster than $e^{-y/6}$ throughout $y\in[0,\sqrt{2}K_1]$. So, in both cases we can truncate the interval of integration with respect to $y$ at $L:=L(K,j,\epsilon)=\lceil 6\nu(K,j,\epsilon)\rceil$ say. 

Once truncated, the interval of integration (in both cases) is divided into $L$ consecutive intervals $[n,n+1)$. As explained in detail following (\ref{eq:done11gen}) earlier, over each subinterval $[n,n+1)$, we apply the change of variable $y\leftarrow y-n$. Then we employ Taylor expansions to reduce the integrand to a polynomial in $y$, with coefficients depending on $x$, of degree $O(\nu(K,j,\epsilon))$, plus an error of size $O(\epsilon/K^2)$ say. On integrating said polynomial directly with respect to $y$, we see that (\ref{eq:ixc00}), hence (\ref{eq:done2}), is equal to a linear combination of $O(\nu(K,j,\epsilon)^2)$ integrals of the form 

\begin{equation}\label{eq:ixc212}
\begin{split}
\int_w^z x^l e^{-\sqrt{3} \pi i n x -\pi  n x}  F(K;a+\alpha x, b) \, dx\,,\quad \int_w^z  x^l e^{-2\pi i K_1 x  -2\pi  n x } F(K;a+\alpha x, b) \, dx \,,
\end{split}
\end{equation}

\noindent
plus an error of size $O(\epsilon/K^2)$ say, where $0\le n<L$, and $0\le l=O(\nu(K,j,\epsilon))$, and where the coefficients of the linear combination can be computed quickly and are each of size $O(1)$. Finally, these integrals are treated similarly to (\ref{eq:done11gen}) earlier.
\end{proof}

\begin{lem} \label{lem:la6}
There are absolute constants $\kappa_{15},\kappa_{16}$, $A_{21}$, $A_{22}$, and $A_{23}$, such that for any $\epsilon \in (0,e^{-1})$, any integer $j\ge 0$,  any positive integers $K$ and $K_1$ satisfying $\Lambda(K,j,\epsilon)< K_1< K$,  any real $\beta$ satisfying $|\beta|\le 100 K_1^3$ say, any real $\alpha$ satisfying $|\alpha| \le 100 K_1^3$ say, any real $\eta$ satisfying  $|\eta|\le 100 K_1^3$ say, any real $w\in [0,1]$ say, any $\theta \in \{-1,1\}$, and any real $c$ satisfying $|c K_1^2|<1/\Lambda(K,j,\epsilon)$, the integral

\begin{equation} \label{eq:airy00}
\frac{1}{(K_1)^j} \int_{0}^{K_1} y^j e^{2\pi i  c y^3+2\pi i \eta y} \int_{0}^{w} e^{2\pi  i (\alpha-\theta y) x- 2\pi i \beta x^2} \, dx\, dy\,,
\end{equation}

\noindent
can be computed to within $\pm\,A_{21}\, \nu(K,j,\epsilon)^{\kappa_{15}} K^{-2} \epsilon$ using $\le A_{22}\,\nu(K,j,\epsilon)^{\kappa_{16}}$ arithmetic operations on numbers of $\le A_{23}\,\nu(K,j,\epsilon)^2$ bits.
\end{lem}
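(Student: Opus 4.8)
The plan is to reduce the double integral in (\ref{eq:airy00}) to a short linear combination --- with quickly computable coefficients of size $O(1)$ --- of incomplete Gamma functions of the type (\ref{eq:igf}), of quadratic exponential sums handled by Theorem~\ref{quadraticthm}, and of elementary closed forms, plus an error of size $O(\nu(K,j,\epsilon)^{\kappa}K^{-2}\epsilon)$; since each of these ingredients can be evaluated in poly-log time in $K/\epsilon$ on numbers of $O(\nu(K,j,\epsilon)^2)$ bits, the claimed estimates follow. Throughout one exploits the slack in $\Lambda(K,j,\epsilon)=50^4\nu(K,j,\epsilon)^6$: truncating Taylor series after $O(\nu(K,j,\epsilon))$ terms, and truncating exponentially decaying integrals at length $O(\nu(K,j,\epsilon))$, costs only $O(\epsilon/K^2)$.

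I would first dispose of the inner integral $\int_0^w e^{2\pi i(\alpha-\theta y)x-2\pi i\beta x^2}\,dx$ for fixed $y\in[0,K_1]$. If $|\beta|$ lies below a suitable poly-log threshold one expands $e^{-2\pi i\beta x^2}$ as a polynomial in $x$ of degree $O(\nu(K,j,\epsilon))$ and integrates $\int_0^w x^l e^{2\pi i(\alpha-\theta y)x}\,dx$ in closed form exactly as in (\ref{eq:frange2}); this introduces factors $(\alpha-\theta y)^{-(v+1)}$ and $e^{2\pi i(\alpha-\theta y)w}$, the latter only shifting the linear coefficient of the $y$-phase. If $|\beta|$ exceeds the threshold one completes the square and works with the stationary point $x_0=(\alpha-\theta y)/(2\beta)$: for those $y$ with $x_0$ a bounded distance from $[0,w]$ one integrates by parts in $x$ (equivalently, reduces the resulting Fresnel tail to an incomplete Gamma function of type (\ref{eq:igf}), possibly with a half-integer exponent), while for those $y$ with $x_0$ in or near $[0,w]$ one extends the $x$-integration to $(-\infty,\infty)$ --- the two extra pieces being stationary-point-free and handled as just described --- so that the inner integral becomes $e^{\pm i\pi/4}(2|\beta|)^{-1/2}\exp(2\pi i(\alpha-\theta y)^2/(4\beta))$ plus controllable boundary corrections. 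In every regime the inner integral is thereby written as a short linear combination of terms of the shape $(\text{bounded, slowly varying factor})\cdot\exp(2\pi i\,g(y))$ with $g$ a real polynomial in $y$ of degree $\le 2$ whose $y^2$-coefficient, if present, is $O(1)$; near the single point $y=\theta\alpha$ where $\alpha-\theta y$ vanishes one instead first expands $e^{2\pi i(\alpha-\theta y)x}$ over the $O(\nu(K,j,\epsilon))$-long interval $|\alpha-\theta y|\le O(\nu(K,j,\epsilon))$ and integrates $\int_0^w x^m\,dx$ trivially.

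It then remains to evaluate outer integrals of the form $K_1^{-j}\int y^j\,(\text{bounded factor})\,\exp(2\pi i c y^3+2\pi i g(y))\,dy$ over subintervals of $[0,K_1]$, where $cy^3+g(y)$ is a real cubic with $|cK_1^2|<1/\Lambda(K,j,\epsilon)$. Since this phase is a small cubic perturbation of a quadratic, its derivative is a quadratic with at most one zero in $[0,K_1]$, so there is at most one stationary point $y_0$. If $g$ has a large linear coefficient there is no stationary point in range, and one rotates the contour into a half-plane as in the proof of Lemma~\ref{lem:la5} to make the integrand decay exponentially, truncates at length $O(\nu(K,j,\epsilon))$, subdivides into unit intervals, Taylor-expands on each (including the bounded factors $(\alpha-\theta y)^{-(v+1)}$), and integrates term by term, landing on incomplete Gamma functions of type (\ref{eq:igf}) together with quadratic sums covered by Theorem~\ref{quadraticthm}. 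If a stationary point $y_0\in[0,K_1]$ is present one extracts its contribution exactly as in subsection 4.3: deform $[0,y_0)$ and $(y_0,K_1]$ onto steepest-descent rays by Cauchy's theorem, collect $\exp(2\pi i(cy_0^3+g(y_0)))$ times an elementary Gaussian, and reduce the deformed pieces and the endpoint contributions at $0$ and $K_1$ to the same incomplete Gamma functions by truncation and Taylor expansion; the algebraic factors that arise are of the type displayed in (\ref{eq:ssum})--(\ref{eq:altform}). The short $y$-range near $y=\theta\alpha$ isolated above, having length $O(\nu(K,j,\epsilon))$, is dispatched by the same unit-subinterval-plus-Taylor device.

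The one genuinely delicate point is the case analysis and the attendant bookkeeping. For each regime of $|\beta|$, each position of $x_0$ relative to $[0,w]$, and each position of $y_0$ relative to $[0,K_1]$, one must check that every Taylor truncation can be taken with $O(\nu(K,j,\epsilon))$ terms, that all algebraic factors that appear (powers of $\alpha-\theta y$, of $\sqrt{|\beta|}$, and the square-root factors from the $y$-saddle extraction) stay bounded and slowly varying on the relevant subintervals, and that the number of subintervals remains poly-logarithmic, so that the accumulated error is $O(\nu(K,j,\epsilon)^{\kappa}K^{-2}\epsilon)$ and the operation count $O(\nu(K,j,\epsilon)^{\kappa})$ on $O(\nu(K,j,\epsilon)^2)$-bit numbers. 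These checks are tedious but routine, following the pattern already used after (\ref{eq:done11gen}) and in Lemmas~\ref{lem:la4} and~\ref{lem:la5}.
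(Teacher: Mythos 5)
Your proposal follows essentially the same route as the paper's proof: split on whether $|\beta|$ is below a poly-log threshold (Taylor-expanding the quadratic factor when it is), otherwise treat the inner $x$-integral by stationary phase --- extending to the whole line when the stationary point $(\alpha-\theta y)/(2\beta)$ lies in or near $[0,w]$ and handling the stationary-point-free tails by contour rotation/explicit integration --- and then reduce the resulting outer $y$-integrals to Airy-type integrals of the form (\ref{eq:airy34}), dispatched by saddle-point extraction as in subsection 4.3 or by contour shifts, truncation, and unit-interval Taylor expansion down to incomplete Gamma functions (\ref{eq:igf}) and quadratic sums. The only slight imprecision --- asserting the cubic phase has at most one stationary point in $[0,K_1]$, when its derivative is a quadratic that could in principle vanish twice --- is shared with the paper's own treatment of (\ref{eq:airy34}) and is harmless, since each saddle point is extracted by the identical procedure.
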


\begin{proof}
Conjugating if necessary, we may also assume $\beta\ge 0$. Let us first deal with the case $\beta \le L:=L(K,j,\epsilon)= \lceil \nu(K,j,\epsilon)\rceil$. We make the change of variable $x\leftarrow Lx$ in (\ref{eq:airy00}). Then we divide the resulting interval of integration into $\lfloor L\rfloor$ consecutive subintervals $[n,n+1)$, where $0\le  n<L$ an integer, as well as a final subinterval over $[\lfloor wL \rfloor, wL)$. It suffices to show how to deal with the integral over each such subinterval since there are $\le L+1=O(\nu)$ of them. 

Following a similar procedure to that following (\ref{eq:done11gen}) earlier, over the subinterval $[n,n+1)$, we employ Taylor expansions (preceded, as usual, by the change of variable $y\leftarrow y-n$) to reduce the term $e^{-2\pi i \beta x^2/L^2}$ in the integrand to a polynomial in $x$ of degree $O(\nu(K,j,\epsilon))$, plus an error of size $O(\epsilon/K^2)$. Notice in doing so, we appeal to the bound $\beta\le L$. At this point, we reach a linear combination of $O(\nu(K,j,\epsilon))$ integrals of the form

\begin{equation} \label{eq:airy1}
\frac{1}{(K_1)^j} \int_{0}^{K_1} y^j  e^{2\pi i  c y^3+2\pi i \eta_1 y}\int_{0}^1  x^se^{2\pi  i \frac{\alpha_1-\theta y}{ L} x} \, dx\, dy\,, 
\end{equation}

\noindent
where the coefficients of the linear combination can be computed quickly, are of size $O(1)$ each, and where  $0\le s=O(\nu(K,j,\epsilon))$ an integer, $\eta_1:=\eta_{1,\beta,L,n}$ a real number satisfying $\eta_1=O(K_1^3)$, and $\alpha_1:=\alpha_{1,\beta,L,n}$ a real number satisfying $\alpha_1 =O(K_1^3)$. We divide the interval of integration with respect to $y$ in (\ref{eq:airy1}) into two sets: 

\begin{equation}
\begin{split}
&I_1:=\{y\in [0,K_1]\, :\, |\alpha_1-\theta y|\ge 2(s+1)L\}\,,\\
&I_2:=\{y\in [0,K_1]\,:\, |\alpha_1-\theta y|<2(s+1)L \}\,. 
\end{split}
\end{equation}

\noindent
So $I_1\cup I_2=[0,K_1]$. Notice each of $I_1$ and $I_2$, as are all other such sets that occur in this proof, is the union of $O(1)$ many intervals of the form $[s,t]$ where $s,t\in [0,K_1]$. 

To deal with the integral (\ref{eq:airy1}), with $y$ restricted to $I_2$, we start by making the change of variable $y\leftarrow \alpha_1-\theta y$. This leads to

\begin{equation}\label{eq:airy1x}
\frac{1}{(K_1)^j} \int_{I_3} y^j  e^{2\pi i  c \theta (\alpha_1-y)^3+2\pi i \eta_1 \theta(\alpha_1-y)}\int_{0}^1  x^s e^{2\pi  i y x/L} \, dx\, dy\,, 
\end{equation}

\noindent
where  $I_3:=\{y\in [\alpha_1,\alpha_1-\theta K_1]\,:\, |y|<2(s+1)L\}$. By another change of variable, $x\leftarrow 2(s+1)x$, applied to the integral with respect to $x$ in (\ref{eq:airy1x}), followed by dividing the resulting interval of integration into $2(s+1)$ consecutive intervals $[n,n+1)$, we can reduce (\ref{eq:airy1x}), via a standard application of Taylor expansions, to a linear combination, with quickly computable coefficients each of size $O(1)$, of $O(\nu(K,j,\epsilon))$ integrals of the form

\begin{equation}\label{eq:airy1xx}
\frac{1}{(K_1)^j\,(2s+2)^s} \int_{I_3} y^j  e^{2\pi i  c \theta (\alpha_1-y)^3+2\pi i \eta_1 \theta(\alpha_1-y)}\int_n^{n+1}  x^s e^{2\pi  i y x/(2(s+1)L)} \, dx\, dy\,, 
\end{equation}
 
\noindent
plus an error of size $O(\epsilon/K^2)$ say, where $0\le n<2(s+1)$ an integer. 

Since $|y/(2(s+1)L)|<1$ over $I_3$, then by the change of variable $x\leftarrow x-n$, followed by yet another application of Taylor expansions, we can eliminate the cross term $e^{2\pi i x y/(2(s+1)L)}$ in (\ref{eq:airy1xx}) as a polynomial in $xy$ of degree $O(\nu(K,j,\epsilon))$, plus an error of size $O(\epsilon/K^2)$ say. On integrating directly  with respect to $x$, we arrive at a linear combination, with quickly computable coefficients each of size $O(1)$, of $O(\nu(K,j,\epsilon))$ integrals of the form (\ref{eq:airy34}) below.  As we will soon explain, such integrals can be computed efficiently, 

As for the set $y\in I_1$, we integrate directly with respect to $x$ in (\ref{eq:airy1}) to also obtain a linear combination, with quickly computable coefficients each of size $O(1)$, of $s+1$ integrals

\begin{equation} \label{eq:airy2}
\frac{s!\, L^r}{(s+1-r)!\, (K_1)^j\,(2\pi i)^r} \int_{I_1} y^{j}(\alpha_1-\theta y)^{-r} e^{2\pi i  cy^3+ 2\pi i \eta_2 y} \, dy\,, 
\end{equation}

\noindent
where $0\le r \le s+1$ an integer and $\eta_2:=\eta_{2,\alpha_1,\eta_1}$ is a real number satisfying $\eta_2=O(K_1^3)$ that can be computed quickly. Since $|\alpha_1-\theta y|\ge 2(s+1)L$ for all $y\in I_1$, it follows with careful use of Taylor expansions (see the treatment of (\ref{eq:remform2}) in \textsection{4.2}) that the evaluation of (\ref{eq:airy2}) can be reduced to computing a linear combination, with quickly computable coefficients each of size $O(1)$, of $O(\nu(K,j,\epsilon)^3)$ integrals of the form

\begin{equation} \label{eq:airy34}
\frac{1}{\Delta^u} \int_{0}^{\Delta}  y^{u} e^{2\pi i  c y^3+2\pi i \eta_4 y^2+2\pi i \eta_3 y} \, dy\,, 
\end{equation}

\noindent
where $0\le u=O(\nu(K,j,\epsilon))$ an integer, $\Delta=O(K_1)$ a real number, $|c \Delta^2| <1/\Lambda(K,j,\epsilon)$ say, where $c$ is a real number,  $\eta_3=O(K_1^3)$ a real number, and $\eta_4=O(1)$ a real number. The integral (\ref{eq:airy34}) is a variation on the Airy integral; see~\cite{GK} for example. We implicitly showed how to compute it efficiently in \textsection{4.3}. Briefly though, one considers two cases: either the integrand has a saddle-point or it does not; that is, either \mbox{$3cy^2+2\eta_4 y+\eta_3$} has a zero in $[0,\Delta]$ or it does not. In the former case, we extract the saddle-point contribution like is done in \textsection{4.3}. This reduces the problem to evaluating an integral of the form (\ref{eq:airy34}), but with no saddle-point. And in the latter case (the no saddle-point case), we use Cauchy's theorem to suitably shift the contour of integration (to the stationary phase) so the modulus of the integrand is rapidly decaying, and the interval of integration can be truncated quickly, after distance about $O(\nu(K,j,\epsilon))$. We then divide the truncated interval of integration into $O(\nu(K,j,\epsilon))$ consecutive subintervals $[n,n+1)$. Over each subinterval, we show the integral can be computed efficiently. We mention the procedure for extracting the saddle point is that followed in evaluating the integrals (\ref{eq:ikjj10}) in \textsection{4.3}. We remark the evaluation of integral (\ref{eq:airy34}) essentially reduces to evaluating incomplete Gamma functions like (\ref{eq:igf}). 

Having disposed of the case $\beta<L$ in the integral (\ref{eq:airy00}), we now consider the case $\beta \ge L$ (recall $L:=L(K,j,\epsilon)=\lceil \nu(K,j,\epsilon)\rceil$).  So define

\begin{equation}\label{eq:i4i5}
\begin{split}
&I_4:=\{y\in [0,K_1]\,:\, (\alpha -\theta y)/(2\beta) \in [0,w]\}\,,\\
&I_5:=\{y\in [0,K_1]\,:\, (\alpha -\theta y)/(2\beta) \notin [0,w]\}\,,\\
\end{split}
\end{equation}

\noindent
Let us compute the integral (\ref{eq:airy00}) over the region $(x,y)\in [0,w]\times I_5$ first. We write \mbox{$I_5=I_6\cup I_7$}, where $I_6$ is the subset of $I_5$ where $\alpha-\theta y>2\beta w$ and $I_7$ is the subset where \mbox{$\alpha-\theta y<0$}. We deal with $I_6$ and $I_7$ separately.

Over $(x,y)\in [0,w]\times I_6$, we apply Cauchy's theorem to the integral with respect to $x$ in (\ref{eq:airy00}) to replace the contour $\{x\,:\,0\le x\le w\}$ there with the contours $\{ix\,:\, 0\le x<\infty\}$ and $\{w+ix\,:\,0\le x<\infty\}$, appropriately oriented. On following this by the change of variable $x\leftarrow \sqrt{2\beta} x$, we see the integral (\ref{eq:airy00}), restricted to $(x,y)\in [0,w]\times I_6$, is equal to a linear combination, with quickly computable coefficients each of size $O(1)$,  of the two integrals

\begin{equation} \label{eq:airy4}
\begin{split}
&\frac{1}{(K_1)^j} \int_{I_6} y^j e^{2\pi i  c y^3+2\pi i \eta y}\frac{1}{\sqrt{2\beta}} \int_{0}^{\infty} e^{-2\pi  \tau_1(y) x+\pi i x^2}\, dx\, dy\,,   \\
&\frac{1}{(K_1)^j} \int_{I_6} y^j e^{2\pi i  c y^3+2\pi i \tilde{\eta} y}\frac{1}{\sqrt{2\beta}} \int_{0}^{\infty} e^{-2\pi \tau_2(y) x+\pi i x^2 } \, dx\, dy \,,
\end{split}
\end{equation}

\noindent
where $\tilde{\eta}:=\tilde{\eta}_{\theta,w,\eta}$ is a real number satisfying $\tilde{\eta}=O(\eta+1)$, and

\begin{equation}\label{eq:tau1212}
\tau_1(y):=\tau_{1,\alpha,\theta,\beta}(y)=\frac{\alpha-\theta y}{\sqrt{2\beta}},\qquad \tau_2(y):=\tau_{2,\alpha,\theta,\beta,w}(y)=\frac{\alpha-\theta y-2 \beta w}{\sqrt{2\beta}}\,. 
\end{equation}

We start by showing how to compute the second integral in (\ref{eq:airy4}) efficiently. By the definitions of $\tau_2(y)$ and $I_6$, we have $\tau_2(y)\ge 0$ over $I_6$. So we can use Cauchy's theorem to shift the contour of integration in the inner integral by an angle of $\pi/4$. This transforms the second integral in (\ref{eq:airy4}) to:

\begin{equation} \label{eq:airy5}
\frac{1}{(K_1)^j} \int_{I_6} y^j e^{2\pi i  c y^3+2\pi i \tilde{\eta} y} \frac{e^{\pi i /4}}{\sqrt{2\beta}}\int_{0}^{\infty} e^{-2\pi e^{\pi i /4}\tau_2(y) x-\pi  x^2} \, dx\,. 
\end{equation}

\noindent
We invoke our standard argument where we exploit the exponential decay in the modulus of the integrand to truncate the interval of integration with respect to $x$ in (\ref{eq:airy5}) after distance about $O(\nu(K,j,\epsilon)$ (which results in small enough truncation error of size $O(\epsilon/K^2)$), then divide the truncated interval into $O(\nu(K,j,\epsilon))$ consecutive subintervals $[n,n+1)$, and deal with one subinterval at a time. Over each subinterval, one considers the regions determined by $\tau_2(y)\le L$ and $\tau_2(y)\ge L$ separately. One obtains, with some labor, that the evaluation of the integral (\ref{eq:airy5}) can be reduced to evaluating a linear combination of $O(\nu(K,j,\epsilon)^3)$ of Airy integrals of the form (\ref{eq:airy34}). (Notice over the region determined by $\tau_2(y)>L$, we encounter integrals like (\ref{eq:airy2}).) 

As for the first integral in (\ref{eq:airy4}), its computation is even easier because $\tau_1(y)\ge \sqrt{2\beta}w$ for $y\in I_6$ which ensures faster decay. And the evaluation of the integral (\ref{eq:airy00}), restricted to $(x,y)\in [0,w]\times I_7$, is similar to the case $(x,y)\in [0,w]\times I_6$ already considered. 

So it remains to deal with (\ref{eq:airy00}) when restricted to the region $(x,y)\in [0,w]\times I_4$.  There, we use Cauchy's theorem to replace the contour $\{x\,:\,0\le x\le w\}$ in the integral with respect to $x$ in (\ref{eq:airy00}) with the contours $\{w-ix\,:\, 0\le x\le w\}$ and $\{xe^{-\pi i/4}\,:\, 0\le x\le \sqrt{2}w\}$, appropriately oriented. On following this by the change of variable $x\leftarrow \sqrt{2\beta} x$, we see the integral  (\ref{eq:airy00}), restricted to $(x,y)\in [0,w]\times I_4$,  is equal to a linear combination of the two integrals

\begin{equation} \label{eq:airy10}
\begin{split}
&\frac{1}{(K_1)^j} \int_{I_4}y^j e^{2\pi i  c y^3+2\pi i \eta y} \frac{1}{\sqrt{2\beta}} \int_{0}^{2\sqrt{\beta}w} e^{2\pi i e^{-\pi i/4} \tau_1(y) x-\pi  x^2} \, dx\, dy\,,\\
&\frac{1}{(K_1)^j}  \int_{I_4}y^j e^{2\pi i  c y^3+2\pi i \eta y}  \frac{1}{\sqrt{2\beta}} \int_{0}^{\sqrt{2\beta}w}  e^{2\pi  \tau_2(y) x+\pi i x^2} \, dx\, dy\,.
\end{split}
\end{equation}

\noindent
where the coefficients of the linear combination are quickly computable, and are of size $O(1)$ each. By the definition of $\tau_2(y)$ in (\ref{eq:tau1212}), and the definition of $I_4$ in (\ref{eq:i4i5}), we have $\tau_2(y)\le 0$ for $y\in I_4$. So the second integral in (\ref{eq:airy10}) can be evaluated efficiently in an essentially similar way to the second integral in (\ref{eq:airy4}); that is, we use Cauchy's theorem to shift the contour of integration by an angle of $\pi/4$, then we exploit the guaranteed exponential decay thus obtained. 

As for the first integral in (\ref{eq:airy10}), we have $\tau_1(y)\ge 0$, which means there is possibly some exponential growth with $x$ in the linear factor $e^{2\pi  i e^{-\pi i /4}\tau_1(y) x}$ there. To deal with this, we consider two cases: $\sqrt{\beta}w \le 1$ and $1<\sqrt{\beta}w$. The treatment of the case $\sqrt{\beta}w \le 1$ is particularly simple since by a direct use of Taylor expansions, the integrand is reduced to a polynomial in $\tau_1(y)$ and $x$ of degree bounded by $O(\nu(K,j,\epsilon))$ in each, which, on integrating with respect to $x$, leads to integrals of the type (\ref{eq:airy34}). So suppose $\sqrt{\beta}w> 1$. In this case, we consider the ``complementary'' regions $(x,y)\in (-\infty,0]\times I_4$ and $(x,y)\in [2\sqrt{\beta}w,\infty)\times I_4$. Computing the first integral in (\ref{eq:airy10}), but with the region of integration switched to $(x,y)\in (-\infty,0]\times I_4$, is not problematic because the linear factor $e^{2\pi  i e^{-\pi i /4}\tau_1(y) x}$ now provides exponential decay with $x$, and so the treatment coincides with that of integral (\ref{eq:airy5}) earlier.  As for the region $(x,y)\in [2\sqrt{\beta}w,\infty)\times I_4$, we have $\tau_1(y)\le \sqrt{2\beta} w$ there. Combined with $x\ge 2\sqrt{\beta}w$, this yields $\tau_1(y)x/\sqrt{2}-x^2/2\le \sqrt{\beta}w (1-x)$. Since $\sqrt{\beta}w> 1$ by hypothesis, the modulus of the integrand over $(x,y)\in [2\sqrt{\beta}w,\infty)\times I_4$ declines faster than $e^{-x}$ with $x$. So once again our standard argument exploiting such exponential decay applies, and leads to integrals of the type (\ref{eq:airy34}). It only remains to calculate 

\begin{equation} \label{eq:airy12}
\begin{split}
&\frac{1}{\sqrt{2\beta}\, (K_1)^j} \int_{I_4}y^j e^{2\pi i  c y^3+2\pi i \eta y} \int_{-\infty}^{\infty}  e^{2\pi e^{\pi i/4} \tau_1(y) x-\pi  x^2} \, dx\, dy   \\
&=\frac{1}{\sqrt{2\beta}\,(K_1)^j} \int_{I_4}y^j e^{2\pi i  c y^3+2\pi i \eta y+\pi i \tau_1^2(y)} \, dy\,,
\end{split}
\end{equation}

\noindent
But this is also of the form (\ref{eq:airy34}), which we know how to handle efficiently.
\end{proof}

\begin{lem} \label{lem:la7}
There are absolute constants $\kappa_{17}$, $\kappa_{18}$, $A_{24}$, $A_{25}$, and $A_{26}$, such that for any $\epsilon \in (0,e^{-1})$, any integer $j\ge 0$, any positive integers $K$ and $K_1$ satisfying $\Lambda(K,j,\epsilon)< K_1< K$, any  $\alpha \in [1/\Lambda(K,j,\epsilon), K_1]$, any $\alpha_1\in [0, 4 \alpha]$, any real number $\alpha_2$ satisfying $1/K^2\le |\alpha_2| \le \alpha$ and $\alpha_2/\alpha^2\le 10$ say, any $a \in [0,1)$, any $b\in [0,1)$, any real number $c_1$ satisfying \mbox{$|c_1K_1^2|<1/\Lambda(K,j,\epsilon)$}, any positive integer $K_2$ satisfying, if possible, $K_2\le  K_1/\alpha$, and with $C_0=\{x\,:\, 1/4<x<\infty\}$, $\tilde{C_0}=\{x\,:\,-\infty<x<-1/4\}$, the sum

\begin{equation} \label{eq:comp1}
\frac{1}{(K_1)^j} \sum_{k=\lfloor \Lambda^2 \rfloor}^{K_2-\lfloor\Lambda^2\rfloor} e^{2\pi i a k+2\pi i b k^2}\int_{0}^{K_1} \int_{C}  y^j e^{2\pi i  c_1 y^3-2\pi i x y} e^{2\pi  i (\alpha k-\alpha_1) x-2\pi i \alpha_2 x^2}  \, dx\, dy \,, 
\end{equation}

\noindent
where  $C\in \{C_0,\tilde{C_0}\}$, can be computed to within $\pm\,A_{24} \,\nu(K,j,\epsilon)^{\kappa_{17}} K^{-2} \epsilon$ using $\le A_{25}\,\nu(K,j,\epsilon)^{\kappa_{18}}$ arithmetic operations on numbers of $\le A_{26}\,\nu(K,j,\epsilon)^2$ bits. 
\end{lem}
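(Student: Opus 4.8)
\emph{Proof plan.} The plan is to reduce the inner double integral in (\ref{eq:comp1}), viewed as a function of the summation index $k$, to an explicit finite linear combination --- with quickly computable coefficients of size $O(1)$ --- of terms of the form (polynomial in $k$)$\,\cdot\,\exp(2\pi i\gamma_1 k + 2\pi i\gamma_2 k^2)$, for quickly computable real $\gamma_1,\gamma_2$ of size $O(K^3)$, up to an error of size $O(\nu(K,j,\epsilon)^\kappa K^{-2}\epsilon)$. Once this is achieved, summing against $e^{2\pi i ak + 2\pi i bk^2}$ over $\lfloor\Lambda^2\rfloor\le k\le K_2-\lfloor\Lambda^2\rfloor$ turns (\ref{eq:comp1}) into a linear combination of $O(\nu(K,j,\epsilon)^{\kappa'})$ quadratic exponential sums, together with sums of the type discussed in Section~5 of~\cite{HI}, all of length $\le K_2\le K$, and all computable efficiently by Theorem~\ref{quadraticthm}. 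It is essential that we do \emph{not} argue term by term, since $K_2$ can be as large as $\Lambda K$; the collapse of the $k$-dependence is what makes a polynomial-in-$\nu$ running time possible.

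We treat the case $C=C_0$, the case $C=\tilde C_0$ being completely analogous after the substitution $x\leftarrow -x$ (and possibly conjugation), and we conjugate if necessary so that $\alpha_2>0$. For fixed $k$ we first carry out the inner integral over $x\in(1/4,\infty)$. Completing the square, its integrand is $e^{2\pi i(\alpha k-\alpha_1-y)^2/(4\alpha_2)}\,e^{-2\pi i\alpha_2(x-x_\circ)^2}$ with centre $x_\circ:=x_\circ(k,y)=(\alpha k-\alpha_1-y)/(2\alpha_2)$; by Cauchy's Theorem the contour $(1/4,\infty)$ may be rotated onto the steepest-descent ray $\{1/4+te^{-i\pi/4}:t\ge 0\}$, on which the modulus of the integrand decays faster than $e^{-2\pi\alpha_2 t^2}$, the arc at infinity contributing nothing. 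Exactly as in the proof of lemma~\ref{lem:la6}, we truncate this ray after length $O(\nu(K,j,\epsilon))$ with error $O(\epsilon/K^2)$, subdivide into unit intervals $[n,n+1)$, apply $t\leftarrow t-n$, Taylor-expand the non-Gaussian part of the exponent, and integrate term by term. Splitting the $(k,y)$-ranges according to whether $x_\circ(k,y)$ lies inside $(1/4,\infty)$ or not --- exactly as the sets $I_4$ and $I_5=I_6\cup I_7$ were split in lemma~\ref{lem:la6} --- the $x$-integral reduces, up to $O(\epsilon/K^2)$, to a linear combination of ``complete'' Fresnel pieces (when $\alpha k-\alpha_1-y>\alpha_2/2$), contributing $\propto e^{2\pi i(\alpha k-\alpha_1-y)^2/(4\alpha_2)}$, and of genuine tails (when $\alpha k-\alpha_1-y<\alpha_2/2$), in which the endpoint $x\approx 1/4$ dominates.

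The role of the restriction to $C_0$ is now visible: in every resulting sub-case the $y$-integral has \emph{no} stationary point in $[0,K_1]$. In a tail sub-case the $y$-phase is, near the dominant endpoint, $c_1y^3-y/4$, whose derivative $3c_1y^2-1/4$ satisfies $|3c_1y^2|\le 3|c_1K_1^2|<3/\Lambda<1/4$ and so never vanishes. In a ``complete Fresnel'' sub-case the $y$-phase is $c_1y^3+(\alpha k-\alpha_1-y)^2/(4\alpha_2)$, whose stationary point $\tilde y$ would force $x_\circ(\tilde y)=3c_1\tilde y^2$, hence $|x_\circ(\tilde y)|<1/4$, contradicting $x_\circ>1/4$ in that region; again no stationary point. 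Consequently, after one further change of variable absorbing the $k$-dependence of the integration limits (the breakpoint is $y=\alpha k-\alpha_1-\alpha_2/2$) into $k$-dependent exponential prefactors, and after Taylor-expanding the remaining $k$- and $y$-dependence of the amplitudes over the subranges where $|\alpha k-\alpha_1-y|$ is bounded below, each $y$-integral becomes an Airy-type integral of the form (\ref{eq:airy34}), which we know how to evaluate efficiently. This exhibits the inner double integral in the form (polynomial in $k$)$\,\cdot\,e^{2\pi i\gamma_1 k+2\pi i\gamma_2 k^2}$ claimed above --- the $k^2$-term arising precisely from the completed square $(\alpha k-\alpha_1)^2/(4\alpha_2)$ --- and summing against $e^{2\pi i ak+2\pi i bk^2}$ and appealing to Theorem~\ref{quadraticthm} completes the argument.

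I expect the main obstacle to be the bookkeeping in the last step: carrying the $k$-dependence cleanly through the $k$-dependent subdivision of the $y$-range, through the repeated Cauchy shifts, and through the Taylor expansions of the amplitudes, so that the final dependence on $k$ collapses to a polynomial times a \emph{single} quadratic-exponential factor, and so that the total number of quadratic exponential sums produced stays polynomial in $\nu(K,j,\epsilon)$ rather than growing with $K_2$. All the analytic ingredients --- rotation to steepest descent, exponential-decay truncation, unit-interval subdivision, and the reduction of integrals of the type (\ref{eq:airy34}) --- are already available from the proof of lemma~\ref{lem:la6} and the saddle-point calculations of subsection~4.3, so the remaining work is the (lengthy but elementary) case analysis.
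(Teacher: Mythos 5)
There is a genuine gap, and it lies in the order in which you perform the two integrals. You evaluate the $x$-integral first, by stationary phase, while $y$ still ranges over all of $[0,K_1]$. The $x$-saddle $x_\circ(k,y)=(\alpha k-\alpha_1-y)/(2\alpha_2)$ then crosses the endpoint $x=1/4$ at the $k$-dependent interior point $y_{\mathrm{break}}(k)=\alpha k-\alpha_1-\alpha_2/2\in[0,K_1]$, and this is fatal for the reduction to quadratic sums. Whether you split the $y$-range at $y_{\mathrm{break}}(k)$ (producing boundary terms of the non-stationary-phase evaluation at $y=y_{\mathrm{break}}(k)$) or keep the complete Gaussian for all $y$ (re-introducing a $y$-stationary point near $\alpha k-\alpha_1$), the transition region contributes terms carrying the phase $c_1\,y_{\mathrm{break}}(k)^3=c_1(\alpha k-\alpha_1-\alpha_2/2)^3$. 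This is a genuine cubic in $k$ with leading coefficient $c_1\alpha^3$; over $k\le K_2\le K_1/\alpha$ it varies by up to $|c_1|K_1^3$, which can be as large as $K_1/\Lambda\gg 1$, and $|c_1\alpha^3|K_2^2$ need not be small. So your final objects are cubic exponential sums of length up to $K_2$ with a non-small cubic coefficient, and Theorem~\ref{quadraticthm} does not apply — the argument becomes circular (or worse) precisely at the step you dismiss as bookkeeping. Two related symptoms: your claim that the rotated endpoint ray can be truncated after length $O(\nu)$ fails near the transition, where the only decay is $e^{-2\pi|\alpha_2|t^2}$ with $|\alpha_2|$ possibly as small as $1/K^2$ (requiring length $\sim K\sqrt{\nu}$); and your proof never uses the restriction $\lfloor\Lambda^2\rfloor\le k\le K_2-\lfloor\Lambda^2\rfloor$, which the lemma's statement builds in for a reason.

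The paper's proof inverts the order. Because $x\in C_0$ keeps $|x|>1/4>3|c_1|y^2$, the phase $c_1y^3-xy$ has no $y$-stationary point uniformly in $x$, so Cauchy's theorem replaces $\{y:0\le y\le K_1\}$ by the contours $C_1=\{ye^{-\pi i/6}\}$ and $C_2=\{K_1-iy\}$, on which the integrand decays like $e^{-y/16}$; the $y$-integral is then truncated at $O(\nu)$ \emph{before} the $x$-integral is touched, so the cubic phase $c_1y^3$ is decoupled from $k$ once and for all. Only then is the $x$-contour rotated and split as $\int_0^\infty=\int_{-\infty}^\infty-\int_{-\infty}^0$: the complete Gaussian yields exactly the quadratic factors $e^{2\pi i a_2k+2\pi i b_1k^2}$, while the half-line piece decays like $e^{-\Lambda|x|/3}$ uniformly in $k$ thanks to the bound $\tau_k\ge\alpha(\Lambda^2-6)\ge\Lambda-1$ — this is where $k\ge\lfloor\Lambda^2\rfloor$ and $\alpha\ge1/\Lambda$ enter. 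Everything then collapses to quadratic sums of the type in Section~5 of~\cite{HI}. Your observation that the restriction to $C_0$ excludes $y$-stationary points is the right starting intuition, but it must be exploited by deforming the $y$-contour first, not by completing the square in $x$ first.
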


\begin{proof}
It suffices to efficiently compute the sum (\ref{eq:comp1}) with $C=C_0$ as the case $C=\tilde{C_0}$ is simply a conjugate case (since $c_1$ and $\alpha_2$ are allowed to assume values in a symmetric interval about 0, and $a$ and $b$ are allowed to be any numbers in $[0,1)$). By the change of variable $x\leftarrow x+1/4$, (\ref{eq:comp1}) is transformed to

\begin{equation} \label{eq:temp012}
\begin{split}
\frac{v_{\alpha_1,\alpha_2}}{(K_1)^j} \sum_{k=\lfloor \Lambda^2\rfloor}^{K_2-\lfloor \Lambda^2\rfloor} & e^{2\pi i a_1 k+2\pi i b k^2}\int_{0}^{\infty} e^{2\pi i \tau_k x-2\pi i \alpha_2 x^2}\,\times \\
& \int_{0}^{K_1} y^j e^{2\pi i  c_1 y^3-\pi i y/2-2\pi i xy} \, dy\, dx\,,
\end{split}
\end{equation}

\noindent
where $a_1:=a_{1,a,\alpha}=a+\alpha/4$, $\tau_k:=\tau_{k,\alpha,\alpha_1,\alpha_2}=\alpha k-\alpha_1- \alpha_2/2$, and $v_{\alpha_1,\alpha_2}$ is quickly computable coefficient of modulus 1. Define the contours

\begin{equation} 
C_1:= \{ye^{-\pi i/6}\,:\,0\le y \le 2K_1/\sqrt{3}\}, \quad C_2:= \{K_1-iy\,:\,0\le y \le K_1/\sqrt{3}\}\,. 
\end{equation}

\noindent
By an application of Cauchy's Theorem, the contour $\{y\,:\,0\le y\le K_1\}$ in the integral with respect to $y$ in (\ref{eq:temp012}), can be replaced with the contours $C_1$ and $C_2$, appropriately oriented. Over $C_1$, we obtain

\begin{equation}\label{eq:lastlemadd}
\begin{split}
\frac{\tilde{v}_{\alpha_1,\alpha_2,j}}{(K_1)^j} \sum_{k=\lfloor \Lambda^2\rfloor}^{K_2-\lfloor \Lambda^2\rfloor} & e^{2\pi i a_1 k+2\pi i b k^2}\int_{0}^{\infty} e^{2\pi i \tau_k x-2\pi i \alpha_2 x^2} \,\times \\
& \int_{0}^{\frac{2K_1}{\sqrt{3}}} y^j e^{2\pi  c_1 y^3-\pi e^{\pi i /3}  y/2-2\pi e^{\pi i /3} xy}\, dy\, dx \,. 
\end{split}
\end{equation}

\noindent
where $\tilde{v}_{\alpha_1,\alpha_2,j}$ is a quickly computable coefficient of modulus 1. Since $|c_1 K_1^2|\le 1/\Lambda(K,j,\epsilon)$ by hypothesis, the integrand in (\ref{eq:lastlemadd}) declines faster than $e^{-y/16}$ in absolute value throughout $0 \le y\le 2K_1/\sqrt{3}$. Therefore, by our standard argument of truncating the interval of integration with respect to $y$ at say $L:=L(K,j,\epsilon)=\lceil 16\nu(K,j,\epsilon) \rceil$, subdividing it into $L$ consecutive subintervals $[n,n+1)$, applying the change of variable $y\leftarrow y-n$ in each subinterval, followed by a routine application of Taylor expansions, the expression (\ref{eq:lastlemadd}) is  reduced to a linear combination, with quickly computable coefficients each of size $O(1)$, of $O(\nu(K,j,\epsilon)^2)$ sums of the form

\begin{equation} \label{eq:comp3}
\sum_{k=\lfloor \Lambda^2\rfloor}^{K_2-\lfloor \Lambda^2\rfloor} e^{2\pi i a_1 k+2\pi i b k^2} \int_0^1 y^l \int_{0}^{\infty} e^{ -2\pi e^{\pi i /3} (y+n) x} e^{2\pi  i \tau_k x-2\pi i \alpha_{2} x^2} \, dx\, dy \,,
\end{equation}

\noindent
where $n$ and $l$ are integers satisfying $0\le n <L$ and $0\le l=O(\nu(K,j,\epsilon))$, plus an error of size $O(\epsilon/K^2)$. Now define

\begin{equation}
C_3:= \{e^{- \pi i /4} x| 0 <x < \infty \}, \qquad C_4:= \{e^{  \pi i /4} x| 0 <x < \infty \} \,. 
\end{equation}

\noindent
If $\alpha_2$ is positive, then 

\begin{equation} 
\left| \int_{0}^{T} e^{ -2\pi e^{\pi i /3} (y+n) (T-ix)} e^{2\pi  i \tau_k (T-ix)-2\pi i \alpha_{2} (T-ix)^2}\, dx\right|\to_{T\to \infty} 0\,.
\end{equation}

\noindent
So via Cauchy's theorem, we can exchange the contour $\{x\,:\, 0\le x<\infty\}$ in the integral with respect to $x$ in (\ref{eq:comp3}) with the contour $C_3$. Similarly, if $\alpha_2$ is negative, we can exchange $\{x\,:\, 0\le x<\infty\}$ for $C_4$. Dealing with the case $\alpha_2$ is positive first, we obtain, after a few rearrangements, that (\ref{eq:comp3}) is equal to

\begin{equation}  \label{eq:comp5}
\sum_{k=\lfloor\Lambda^2\rfloor}^{K_2-\lfloor\Lambda^2\rfloor} e^{2\pi i a_1 k+2\pi i b k^2}\int_0^1 y^l \int_0^{\infty}  e^{2\pi  (e^{ \pi i /4}  \tau_k-e^{\pi i /12} (y+n)) x-2\pi  |\alpha_2| x^2} \, dy\, dx\,. 
\end{equation}

\noindent
Since by hypothesis  $0\le \alpha_1\le 4\alpha$ and $|\alpha_2|\le \alpha$, it follows from the definition $\tau_k:=\alpha k-\alpha_1- \alpha_2/2$ that for $k\ge \lfloor \Lambda(K,j,\epsilon)^2\rfloor$ we have 

\begin{equation}\label{eq:bound0122}
\tau_k \ge \alpha(\Lambda(K,j,\epsilon)^2-6)\,.
\end{equation}

\noindent
And since $\alpha\ge 1/\Lambda(K,j,\epsilon)$ by hypothesis, then (\ref{eq:bound0122}) implies $\tau_k \ge \Lambda(K,j,\epsilon)-1$, say. Hence, for $0\le y\le 1$, $\lfloor \Lambda(K,j,\epsilon)^2\rfloor \le k$, and $0\le n<L$,  we have

\begin{equation} \label{eq:bound012}
\Re \{ e^{\pi i /4} \tau_k -e^{\pi i /12} (y+n) \}\ge \tau_k/2-n-1\ge \Lambda(K,j,\epsilon)/3\,,
\end{equation}

\noindent
which is large. So initially there is some exponential growth with $x$ in the size of the integrand in (\ref{eq:comp5}), which is problematic. As usual though, we handle the situation by writing the integral with respect to $x$ in (\ref{eq:comp5}) as the difference of two integrals, one over $-\infty<x<\infty$ and another over $-\infty<x\le 0$. Starting with the latter, we have by (\ref{eq:bound012}) that the integrand declines faster than $e^{-\Lambda(K,j,\epsilon) |x|/3}$ in absolute value with $x$, hence, it can be truncated at $x=-1$. The resulting truncation error is bounded by $O(e^{-\Lambda(K,j,\epsilon)/3})=O(\epsilon/K^2)$, which is small enough for purposes of the lemma. Since now $|xy|\le 1$, then by a routine application of Taylor expansions, the cross-term $e^{-2\pi e^{\pi i /12} y x}$ in the integrand in (\ref{eq:comp5}) can be expressed as a polynomial in $yx$ of degree bounded by $O(\nu(K,j,\epsilon))$, plus an error of size $O(\epsilon/K^2)$ say. So integrating the resulting integral explicitly with respect to $y$, this procedure  yields a linear combination, with quickly computable coefficients each of size $O(1)$, of $O(\nu(K,j,\epsilon))$ integrals   

\begin{equation}  \label{eq:temp022}
\sum_{k=\lfloor\Lambda^2\rfloor}^{K_2-\lfloor\Lambda^2\rfloor} e^{2\pi i a_1 k+2\pi i b k^2} \int_{0}^{1} x^p  e^{-2\pi ( e^{\pi i /4}\tau_k-e^{\pi i /12} n) x-2\pi  |\alpha_2| x^2} \, dx\,, 
\end{equation}

\noindent
where $p$ is an integer satisfying $0\le p=O(\nu(K,j,\epsilon))$, plus an error of size $O(\epsilon/K^2)$. 

With the aid of the bound (\ref{eq:bound0122}) and $\alpha\ge 1/\Lambda(K,j,\epsilon)$ (by hypothesis),  the integral (\ref{eq:temp022}) can be truncated at $\min\{1,\alpha^{-1}\}$ with a truncation error bounded by $O(\epsilon/K^2)$ say. Once truncated, the quadratic factor $e^{-2\pi |\alpha_2|x^2}$ can be reduced, via Taylor expansions and  by appealing to the bound $\alpha_2/\alpha^2=O(1)$, to a polynomial in $x$ of degree at most $O(\nu(K,j,\epsilon))$, plus an error of size $O(\epsilon/K^2)$. Last, we evaluate the resulting integral (an incomplete Gamma function) explicitly. This, combined with a few further algebraic manipulations, yields quadratic exponential sums of the type discussed in \textsection{5} of~\cite{Hi}, and these sums can be computed efficiently via Theorem~\ref{quadraticthm}. 

So it only remains to calculate

\begin{equation}  \label{eq:comp7}
\sum_{k=\lfloor\Lambda^2\rfloor}^{K_2-\lfloor\Lambda^2\rfloor} e^{2\pi i a_1 k+2\pi i b k^2}\int_0^1 y^l \int_{-\infty}^{\infty} e^{2\pi  (e^{\pi i /4} \tau_k-e^{\pi i /12} (y+n)) x-2\pi  \alpha_2 x^2} \, dy\, dx\,, 
\end{equation}

\noindent
Integrating explicitly with respect to $x$ produces 

\begin{equation}  \label{eq:comp8}
\frac{1}{\sqrt{2\alpha_2}}\sum_{k=\lfloor\Lambda^2\rfloor}^{K_2-\lfloor\Lambda^2\rfloor} e^{2\pi i a_2 k+2\pi i b_1 k^2}\int_{0}^{1} y^l  e^{ -2 \pi  \frac{ e^{\pi i /3} (y+n) \tau_k}{2\alpha_2}   + 2\pi   \frac{ e^{\pi i /6} (y+n)^2}{4\alpha_2} } \, dy\,,
\end{equation}

\noindent
where $a_2:=a_{2,a_1,\alpha,\alpha_1,\alpha_2}$ and $b_1:=b_{1,b,\alpha,\alpha_1,\alpha_2}$ are real numbers satisfying $a_2=O(\alpha^2/\alpha_2)=O(K^4)$ and $b_1=O(\alpha^2/\alpha_2)=O(K^4)$. Certainly, $a_2$ and $b_1$ can be reduced modulo 1,  but we point out  their magnitude pre-reduction modulo 1 to show they can be expressed using $O(\nu(K,j,\epsilon)^2)$ bits throughout, as do all other numbers in this article. The integrand  in (\ref{eq:comp8})  declines very rapidly with $y$. In fact, it is of size $O(\epsilon/K^2)$ say if $n>0$, which is negligible for our purposes. So we may assume $n=0$. 

We truncate the integral with respect to $y$ at $\alpha_2/\alpha$ (notice $\alpha_2/\alpha \le 1$ by hypothesis). Since $y^2/\alpha_2=O(\alpha_2/\alpha^2)$ for $0\le y\le \alpha_2/\alpha$, and since $\alpha_2/\alpha^2=O(1)$ by hypothesis, we can use Taylor expansions to express the term $\exp(\pi  e^{\pi i /6} y^2/(2\alpha_2))$ as a polynomial in $y$ of degree bounded by $O(\nu(K,j,\epsilon))$, plus an error of size $O(\epsilon/K^2)$. Integrating explicitly with respect to $y$, along with a few further manipulations, reduces the problem once again to computing a linear combination, with quickly computable coefficients each of size $O(1)$, of $O(\nu(K,j,\epsilon))$ quadratic exponential sums of the type discussed in \textsection{5} in~\cite{Hi}.

This concludes our computation of (\ref{eq:temp012}) when the contour $\{y\,:\,0\le y\le K_1\}$ there is replaced by $C_1$, and assuming $\alpha_2$ is positive. The situation when $\alpha_2$ is negative, where $C_4$ is used instead of $C_3$, is even easier because we essentially obtain an integral of the form (\ref{eq:comp5}), but with contour $\{x\,:\,-\infty<x\le 0\}$, over which there is (extremely) rapid decay with $x$.

Having shown how to deal with (\ref{eq:temp012}) when the contour of integration with respect to $y$ is $C_1$, we move on to $C_2$. In this case, the integral is

\begin{equation}  \label{eq:comp10}
\begin{split}
\frac{\tilde{v}_{K_1,c_1}}{(K_1)^j} \sum_{k=\lfloor\Lambda^2\rfloor}^{K_2-\lfloor\Lambda^2\rfloor} & e^{2\pi i a_1 k+2\pi i b k^2} \int_{0}^{\infty} e^{-2\pi i \tilde{\tau}_k  x-2\pi i \alpha_{2} x^2} \,\times\\
& \int_{0}^{\frac{K_1}{\sqrt{3}}} (K_1-iy)^j  e^{-2\pi c_1 y^3-6\pi i c_1K_1 y^2-2\pi (1/4+x-3c_1K_1^2)y} \, dy\, dx \,, 
\end{split}
\end{equation}

\noindent
where $\tilde{\tau}_k:=\tau_{K_1,\tau_k}= K_1 -\tau_k$, and $\tilde{v}_{K_1,c_1}$ is a quickly computable constant of modulus 1. Since $|3c_1K_1^2|\le 3/\Lambda(K,j,\epsilon)$, the integral with respect to $y$ declines faster than $e^{-y/16}$ throughout $0\le y\le K_1/\sqrt{3}$.  Employing our standard argument of exploiting the exponential decay, the expression (\ref{eq:comp10}) can be reduced to a linear combination, with quickly computable coefficients each of size $O(1)$, of $O(\nu(K,j,\epsilon)^2)$ sums 

\begin{equation}  \label{eq:comp11}
\sum_{k=\lfloor\Lambda^2\rfloor}^{K_2-\lfloor\Lambda^2\rfloor} e^{\pi i a_1 k+2\pi i b k^2} \int_0^1 y^l \int_0^{\infty} e^{-2\pi (y+n)x -2\pi i \tilde{\tau}_k  x-2\pi i \alpha_{2} x^2} \, dy\, dx\,, 
\end{equation}

\noindent
where the integers $n$ and $l$ satisfy $0\le n <L$ and $0\le l =O(\nu(K,j,\epsilon))$. Finally, (\ref{eq:comp11}) is treated in a completely analogous way to (\ref{eq:comp3}) except now, instead of the bound (\ref{eq:bound0122}),  one appeals to the bound $\tilde{\tau}_k \ge \alpha (\Lambda(K,j,\epsilon)^2-6)$, which holds for $0\le k\le K_2-\lfloor \Lambda(K,j,\epsilon)^2\rfloor$, hence, holds over the range of summation in (\ref{eq:comp11}).  
\end{proof}

\textbf{Acknowledgment.} I would like to thank my PhD thesis advisor Andrew Odlyzko. Without his help and comments this paper would not have been possible.


\begin{thebibliography}{9}
\footnotesize{
\bibitem[BK]{BK}  M.V. Berry; J.P. Keating, {\em{ A new asymptotic representation for $\zeta(1/2+it)$ and quantum spectral determinants}}.  Proc. Roy. Soc. London Ser. A  437  (1992),  no. 1899, 151--173.
\bibitem[BF]{BF} E. Bombieri and J.B. Friedlander, {\em{Dirichlet polynomial approximations to zeta functions}}. Annali della Scuola Normale Superiore di Pisa - Classe di Scienze Sér. 4, 22 no. 3 (1995), p. 517-544 
\bibitem[Da]{Da} Harold Davenport, {\em{Multiplicative number theory}}. Third edition. Revised and with a preface by Hugh L. Montgomery. Graduate Texts in Mathematics, 74. Springer-Verlag, New York, 2000.
\bibitem[Ed]{Ed}  H. M. Edwards, {\em{Riemann's zeta function}}. Reprint of the 1974 original, Dover Publications, NY, 2001.
\bibitem[HB]{HB} D.R. Heath-Brown, {\em{Private communication to A.M. Odlyzko}}.
\bibitem[Hi]{Hi} G.A. Hiary, {\em{A  nearly-optimal method to compute the truncated theta function, its derivatives, and integrals}}. arXiv:0711.5005v2. 
\bibitem[Ga]{Ga} W. Gabcke, \textit{Neue Herleitung und explicite Restabsch\"atzung der Riemann-Siegel-Formel.} Ph.D. Dissertation, G\"ottingen, 1979.
\bibitem[GK]{GK} S.W. Graham and G. Kolesnik, {\em{Van der Corput's method of exponential sums}}. London Mathematical Society Lecture Note Series, 126. Cambridge University Press, Cambridge, 1991.
\bibitem[Od]{Od} A.M. Odlyzko, {\em{The $10^{20}$-th zero of the Riemann zeta function and 175 million of its neighbors}}. Manuscript. www.dtc.umn.edu/$^\sim$odlyzko.
\bibitem[OS]{OS} A. M. Odlyzko and A. Sch\"onhage, {\em{Fast algorithms for multiple evaluations of the Riemann zeta function}}. Trans. Amer. Math. Soc.  309  (1988),  no. 2, 797--809. 
\bibitem[Ru]{Ru} M.O.  Rubinstein, {\em{Computational methods and experiments in analytic number theory}}.  Recent perspectives in random matrix theory and number theory,  425--506, London Math. Soc. Lecture Note Ser., 322, Cambridge Univ. Press, Cambridge, 2005. 
\bibitem[Sc]{Sc} A. Sch\"onhage, {\em{Numerik analytischer Funktionen und Komplexit\"at}}. Jahresber. Deutsch. Math.-Verein.  92  (1990),  no. 1, 1--20. 
\bibitem[Ti]{Ti} E.C. Titchmarsh, {\em{ The Theory of the Riemann zeta-Function}}. 2nd ed., revised by D. R. Heath-Brown. Oxford University Press, Oxford, 1986.
\bibitem[Tu]{Tu} A.M. Turing, {\em{Some calculations of the Riemann zeta-function}}. Proc. London Math. Soc. (3) 3, (1953). 99--117. 
}
\end{thebibliography}
\end{document}